\documentclass[a4paper,english,10pt,oneside]{article}
\usepackage[english]{babel}
\usepackage{amsmath}
\usepackage{amssymb}
\usepackage{wasysym}

\usepackage{tikz}
\usepackage{pgfplots}

\usepackage{graphicx}
\usepackage{mathtools}
\usepackage{amsthm}
\usepackage{booktabs}
\usepackage{xspace}
\usepackage[hidelinks]{hyperref}
\usepackage[font=small]{caption}

\usepackage{subcaption}
\usepackage{todonotes}

\usepackage{thmtools}
\usepackage{thm-restate}

\usepackage{chngcntr}
\usepackage{apptools}
\AtAppendix{\numberwithin{lemma}{section}}
\AtAppendix{\numberwithin{proposition}{section}}

\usepackage{cleveref}

\setlength{\parindent}{0em}
\setlength{\parskip}{1em}

\graphicspath{{pictures/}}

\definecolor{darkblue}{rgb}{0,0,.5}
\DeclareMathOperator{\proj}{proj} 
\DeclareMathOperator{\dist}{dist} 
\DeclareMathOperator{\rec}{rec} 
\DeclareMathOperator{\conv}{conv} 
\DeclareMathOperator{\cl}{cl} 
\DeclareMathOperator{\inte}{int} 
\DeclareMathOperator{\relinte}{ri} 
\newcommand{\RR}{\mathbb{R}} 
\newcommand{\cQ}{\mathcal{Q}} 
\newcommand{\slp}{\bar{s}} 
\newcommand{\xlp}{\bar{x}} 
\newcommand{\ylp}{\bar{y}} 

\newcommand{\stcomp}[1]{\left(#1\right)^c} 


\newcommand{\st}{\,:\,}
\newcommand{\phifun}{\phi_{\lambda}} 
\newcommand{\Hzero}{H_0} 
\newcommand{\HH}{H} 
\newcommand{\xbeta}{{x_\beta}} 
\newcommand{\xbetaO}{x_{\beta_0}} 
\newcommand{\maxhomobg}{C_{G(\lambda)}} 
\newcommand{\maxhomo}{C_{\lambda}} 
\newcommand{\maxnonhomob}{C} 
\newcommand{\shomob}{S_{\leq 0}} 
\newcommand{\maxhomobb}{C_{\phifun}} 
\newcommand{\shomo}{S^h} 
\newcommand{\snonhomo}{S} 

\newcommand{\shomorunning}{S^2_{\leq 0}} 


\usepackage{xifthen}
\ifthenelse{\isundefined{\T}}{%
  \newcommand{\T}{\mathsf{T}}
  }{%
  \renewcommand{\T}{\mathsf{T}}
}

\theoremstyle{plain}
\newtheorem{theorem}{Theorem}
\newtheorem{proposition}{Proposition}
\newtheorem{lemma}{Lemma}
\newtheorem{corollary}{Corollary}
\theoremstyle{definition}
\newtheorem{definition}{Definition}


\declaretheoremstyle[
  qed=\qedsymbol
]{examplestyle}
\declaretheorem[style=examplestyle]{example}
\declaretheorem[style=examplestyle]{remark}

\usepackage{zibtitlepage}

\ZTPAuthor{\ZTPHasOrcid{Gonzalo Mu\~noz}{0000-0002-9003-441X},\ZTPHasOrcid{Felipe Serrano}{0000-0002-7892-3951}}
\ZTPTitle{\bf Maximal quadratic-free sets}

\ZTPInfo{The described research activities are funded by the German Federal Ministry for
Economic Affairs and Energy within the project EnBA-M (ID: 03ET1549D).
The work for this article has been (partly) conducted within the Research Campus
MODAL funded by the German Federal Ministry of Education and Research (BMBF
grant number 05M14ZAM).
}

\ZTPNumber{19-56}
\ZTPMonth{11}
\ZTPYear{2019}

\title{\bf Maximal quadratic-free sets}
\author{
  Gonzalo Mu\~noz\thanks{Universidad de O'Higgins, Rancagua, Chile, \texttt{gonzalo.munoz@uoh.cl}}~,
  Felipe Serrano\thanks{Zuse Institute Berlin, Takustr.~7, 14195~Berlin,
  Germany, \texttt{serrano@zib.de}}
}

\begin{document}
\zibtitlepage
\maketitle

\begin{abstract}
  The intersection cut paradigm is a powerful framework that facilitates the generation of valid linear inequalities, or cutting planes,
  for a potentially complex set $S$. The key ingredients in this construction are a simplicial conic relaxation of $S$ and an $S$-free set: a convex zone whose interior does not
  intersect $S$. Ideally, such $S$-free set would be maximal inclusion-wise, as it would generate a deeper cutting plane. However, maximality can be a challenging goal in general. In this work, we show how to construct maximal $S$-free sets when $S$ is defined as a general quadratic inequality. Our maximal
  $S$-free sets are such that efficient separation of a vertex in LP-based approaches to quadratically constrained problems is guaranteed. To the best of our knowledge, this work is the first to provide maximal quadratic-free sets. 
\end{abstract}

\section{Introduction}

Cutting planes have been at the core of the development of
tractable computational techniques for integer-programming for decades. Their
rich theory and remarkable empirical performance have constantly caught the
attention of the optimization community, and has recently seen renewed efforts
on their extensions to the non-linear setting. 

Consider a generic optimization problem, which we assume to have linear objective without loss of generality:
\begin{subequations} \label{eq:generaloptimization}
  \begin{align}
    \min \quad & c^\T x \\
    \text{s.t.} \quad & x\in S \subseteq \mathbb{R}^n.
  \end{align}
  \end{subequations}
A common framework for finding strong approximations to this problem is to first
find $\xlp$, an extreme point optimal solution of an LP relaxation of
\eqref{eq:generaloptimization}, and check if $\xlp \in S$.
If so, then \eqref{eq:generaloptimization} is solved. Otherwise, we try
to find a cutting plane: an inequality separating $\xlp$ from $S$.
Such inequality can be used to refine the LP relaxation of
\eqref{eq:generaloptimization}.

One way of finding such a cutting plane is through the \emph{intersection
cut}~\cite{Tuy1964,Balas1971,Glover1973} framework.
We refer the reader to~\cite{ConfortiCornuejolsZambelli2011a} for the necessary
background on this procedure.
For the purposes of this article, it suffices to know that for an intersection
cut to be computed we need $\xlp \not\in S$ as above, a simplicial conic relaxation of $S$ with
apex $\xlp$, and an $S$-free set $C$ ---a convex set satisfying \( \inte(C) \cap
S = \emptyset \)--- such that $\xlp \in \inte(C)$.
In this work we assume that $\xlp$ and the simplicial cone are given and focus
on the construction of the $S$-free sets. 

A particularly important case is obtained when \eqref{eq:generaloptimization} is
a quadratic problem, that is, 
\[
  S = \{ x\in \mathbb{R}^n \st x^\T Q_i x + b_i^\T x + c_i \leq 0,\, i=1,\ldots, m \}
\]
for certain $n\times n$ matrices $Q_i$, not necessarily positive semi-definite.
Note that if $\xlp \not\in S$, there exists $i\in \{1,\ldots, m\}$ such that
\[
  \xlp \not\in S_i \coloneqq \{ x\in \mathbb{R}^n \st x^\T Q_i x + b_i^\T
  x + c_i \leq 0\},
\]
and constructing an $S_i$-free set containing $\xlp$ would suffice to ensure
separation.
Thus, slightly abusing notation, given $\xlp$ we focus on a systematic way of
constructing $S$-free sets containing $\xlp$, where $S$ is defined using
a single quadratic inequality:
\[
  S = \{ x\in \mathbb{R}^n \st x^\T Q x + b^\T x + c \leq 0\}.
\]
As a final note, if we consider the simplest form of intersection cuts, where the cuts are computed using the intersection points of the $S$-free set and the extreme rays of the simplicial conic relaxation of $S$ (i.e., using the gauge), then the largest the $S$-free set the better. In other words, if two $S$-free sets $C_1,C_2$ are such that $C_1 \subsetneq C_2$, the intersection cut derived from $C_2$ is stronger than the one derived from $C_1$~\cite{ConfortiCornuejolsDaniilidisLemarechalMalick2015}.
Therefore, we aim at computing \emph{maximal} $S$-free sets.

\subsection{Literature review}
The history of intersection cuts and $S$-free sets dates back to the 60's.
%
%
The most basic form of an intersection cut is given by the unique hyperplane that goes
through the intersection points between the rays of the simplicial cone and the
boundary of the $S$-free set.
They were originally introduced in the nonlinear setting by Tuy~\cite{Tuy1964}
for the problem of minimizing a concave function over a polytope.
Later on, they were introduced in integer programming by Balas~\cite{Balas1971} and have been largely studied since.
The more modern form of intersection cuts deduced from an arbitrary convex
$S$-free set is due to Glover~\cite{Glover1973}, although the term $S$-free was
coined by Dey and Wolsey~\cite{DeyWolsey2010}.

Even though the origin of intersection cuts was in nonlinear optimization, most of the developments have been in the mixed-integer linear programming literature. See e.g. \cite{ConfortiCornuejolsDaniilidisLemarechalMalick2015,cornuejols2013sufficiency,basu2010minimal} for in-depth analyses of the relation of intersection cuts using maximal $\mathbb{Z}^n$-free sets and the generation of facets of $\conv(S)$. We also refer the reader to \cite{Andersen2007,Basu2010,andersen2010analysis,Borozan2009,dey2008lifting,Gomory1972} and references therein. For extensions of this approach to the mixed-integer conic case, we refer the reader to \cite{andersen2013intersection,kilincc2014minimal,modaresi2015split,modaresi2015intersection}. Recently, Towle and Luedtke \cite{towle2019intersection} proposed a method for constructing valid cutting planes with a similar approach to intersection cuts, but allowing $\xlp$ to not be in the $S$-free set.

Lately, there has been several developments of intersection cuts in a non-linear setting.
Fischetti et al.~\cite{FischettiLjubicMonaciSinnl2016} applied intersection cuts
to bilevel optimization.
Bienstock et al.~\cite{bienstock2016outer,Bienstock_2019} studied outer-product-free
sets; these can be used for generating intersection cuts for polynomial optimization when using an extended formulation.
Serrano~\cite{Serrano2019} showed how to construct a concave underestimator of
any factorable function and from them one can build intersection cuts for
factorable mixed integer non-linear programs.
Fischetti and Monaci~\cite{Fischetti_2019} constructed bilinear-free sets through a bound disjunction and, in each term of the disjunction, underestimating
the bilinear term with McCormick inequalities~\cite{McCormick1976}.
The complement of this disjunction is the bilinear-free set.

Of all these approaches for constructing intersection cuts in a non-linear
setting, the only one that ensures maximality of the corresponding 
$S$-free sets
is the work of Bienstock et al.~\cite{bienstock2016outer,Bienstock_2019}. While their approach can also be used to generate cutting planes in our setting (general quadratic inequalities), the definition of $S$ differs: Bienstock et al. use a moment-based extended formulation of polynomial optimization problems \cite{shor_quadratic_1987,lasserre2001global,laurent2009sums} and from there define $S$ as the set of matrices which are positive semi-definite and of rank 1, which the authors refer to as \emph{outer-products}. Maximality is computed with respect to this notion. It is unclear if a maximal \emph{outer-product}-free set can be converted into a maximal quadratic-free set, or vice-versa. There is an even more fundamental difference that makes these approaches incomparable at this point: in a quadratic setting, the approach of Bienstock et al. would compute a cutting plane in extended space of dimension proportional to $n^2$, whereas our approach can construct a maximal $S$-free set in the original space. The quadratic dimension increase can be a drawback in some applications, however stronger cuts can be derived from extended formulations in some cases \cite{bodur2017cutting}. A thorough comparison of these approaches is subject of future work.

We refer to the survey~\cite{BonamiLinderothLodi2011} and the references therein for other efforts of extending cutting planes to the nonlinear setting.

\subsection{Contribution}
The main contribution of this paper is an explicit construction of maximal $S$-free sets, when $S$ is defined using a non-convex quadratic inequality (\Cref{thm:max_nonhomo_good} and \Cref{thm:max_nonhomo_bad}). We achieve this by relying on the fact that any quadratic inequality can be represented using a homogeneous quadratic inequality intersected with a linear \emph{equality}. While these maximal $S$-free sets are constructed using semi-infinite representations, we show equivalent closed-form representations of them.

In order to construct these sets, we also derive maximal $S$-free sets for sets $S$ defined as the intersection of a homogeneous quadratic inequality intersected with a linear \emph{homogeneous inequality}. These are an important intermediate step in our construction, but they are of independent interest as well.

In order to show our results, we state and prove a criterion for maximality of $S$-free sets which
generalizes a criterion proven by Dey and Wolsey (the `only if'
of~\cite[Proposition A.4]{DeyWolsey2010}) in the case of maximal
\emph{lattice-free} sets (\Cref{def:expose} and \Cref{thm:exposed_maximality}).
We also develop a new criterion that can handle a special phenomenon that arises
in our setting and also in non-linear integer programming: the boundary of
a maximal $S$-free set may not even intersect $S$.
Instead, the intersection might be ``at infinity''.
We formalize this in \Cref{def:expose_at_infinity} and show the criterion in
\Cref{thm:mixed_exposed_maximality_wrt}.

\subsection{Notation}
We mostly follow standard notation. We denote $\mathbb{R}^n$ the space of
$n$-dimensional vectors with real entries.
$\| \cdot \| $ is the euclidean norm in $\mathbb{R}^n$ and given a positive
definite matrix $A$, we denote $\|\cdot \|_A$ the norm defined by $\sqrt{x^\T
A x}$. $B_r(x)$ and $D_r(x)$ denote the euclidean ball centered at $x$ of radius
$r$ and its boundary, respectively, i.e., $B_r(x) = \{y\in \mathbb{R}^n \,: \,
\| y - x \| \leq r \}$ and $D_r(x) = \{y\in \mathbb{R}^n \,: \, \| y - x \|
= r \}$.
Given a vector $v$ and a set $C$, we denote the distance between $v$ and $C$ as
$\dist(v, C) = \inf_{x \in C} \|v - x\|$.
We denote the set $\{v + x \st x \in C\}$ as $v + C$.
We denote the transpose operator as $(\cdot)^\T$.
For a set of vectors $\{v^1, \ldots, v^k\}\subseteq \mathbb{R}^n$, we denote
$\langle v^1,\ldots, v^k \rangle$ the subspace generated by them.

Given a set $C\subseteq \mathbb{R}^n$ and a subspace $H$ of $\mathbb{R}^n$, we denote $\proj_H C$ the projection of $C$ onto $H$. $\stcomp{\cdot}$, $\conv(\cdot), \inte(\cdot)$, $\relinte(\cdot)$ and $\rec(\cdot)$ denotes the complement, convex hull, interior, relative interior and recession cone of a set, respectively.

Perhaps the least standard notation we use is denoting an inequality $\alpha^\T
x \leq \beta$ by $(\alpha, \beta)$.
If $\beta = 0$ we denote it as well as $\alpha$.
This is based on the fact that in the polar of a convex set ---roughly, the set
of all valid inequalities--- the inequalities are points and, although we do not use any polarity results, many of the ideas in this work were originally
developed from looking at the polar.

\subsection{Outline}
The rest of the paper is organized as follows.
In \Cref{sec:preliminaries} we introduce some definitions and criteria
to prove maximality of $S$-free sets.
In particular, we define \emph{exposing points} and \emph{exposing point at
infinity} and show that if $C$ is an $S$-free set whose defining inequalities
are exposed or exposed at infinity, then $C$ is maximal.
In \Cref{sec:maximalhomo} we show how to construct maximal $S$-free sets
when $S$ is defined by a \emph{homogeneous} quadratic function.
\Cref{sec:homowithhomo} presents the construction of maximal $S$-free
sets when $S$ is defined by a \emph{homogeneous} quadratic function and
a homogeneous linear inequality constraints.
The construction of a maximal $S$-free set when $S$ is the sublevel set of any
quadratic function is presented in Section~\ref{sec:nonhomo}.
Our constructions depend on a ``canonical'' representation of the set $S$.
The effects of this representation are discussed in
\Cref{sec:transformations}.
Finally, some conclusions and directions for future work can be found in
\Cref{sec:conclusions}.

Some proofs of technical results, or results that would negatively affect the flow of the
document can be found in an appendix.

\section{Preliminaries} \label{sec:preliminaries}
In this section we collect definitions and results that are going to be useful
later on the paper.
As we mentioned above, our main object of study is the set $S = \{ x \in
\mathbb{R}^p \st q(x) \leq 0 \} \subseteq \mathbb{R}^p$, where $q$ is
a quadratic function.
To make the analysis easier, we can work on $\mathbb{R}^{p+1}$ and consider the cone generated by $S \times \{1\}$, namely, $\{ (x,z) \in \mathbb{R}^{p + 1} \st z^2 q(\frac{x}{z}) \leq 0,\ z \geq 0\}$.
To recover the original $S$, however, we must intersect the cone with $z
= 1$.
Since we are interested in maximal $S$-free sets, this motivates the following
definition, see also \cite{Basu2010}.
\begin{definition}
  Given $S, C, H \subseteq \mathbb{R}^{n}$ where $S$ is closed, $C$ is closed
  and convex and $H$ is an affine hyperplane, we say that $C$ is 
  \emph{$S$-free with respect to $H$} if $C \cap H$ is $S \cap H$-free
  w.r.t the induced topology in $H$. We say $C$ is \emph{maximal
  $S$-free with respect to $H$}, if for any $C'\supseteq C$ that is 
  $S$-free with respect to $H$ it holds that $C' \cap H \subseteq C\cap H$.
\end{definition}

\subsection{Techniques for proving maximality}
In this section we describe some sufficient conditions to prove that a convex set $C$ is
maximal $S$-free which will be used in the paper.

A sufficient (and necessary) condition for a full dimensional convex $C$
lattice-free (that is, $S = \mathbb{Z}^n$) set to be maximal is that $C$ is
a polyhedron and there is a point of $\mathbb{Z}^n$ in the relative interior of
each of its facets~\cite[Theorem 6.18]{ConfortiCornuejolsZambelli2014}.
More generally, if $C$ is a full dimensional $S$-free polyhedron such that there
is a point of $S$ in the relative interior of each facet, then $C$ is maximal.
The problem with extending this property to non-polyhedral maximal $S$-free sets
is that they might not even have facets, e.g., if $S$ is the complement of
$\inte B_1(0)$ and $C$ is $B_1(0)$ in dimension 3 or higher.
The motivation of the next definition is to capture the property of a facet that is key for proving maximality.
\begin{definition} \label{def:expose}
  Given a convex set $C \subseteq \mathbb{R}^n$ and a valid
  inequality $\alpha^\T x \leq \beta$, we say that a point $x_0 \in \mathbb{R}^n$
  \emph{exposes} $(\alpha, \beta)$ with respect to $C$ or that $(\alpha, \beta)$ is \emph{exposed}
  by $x_0$ if
  \begin{itemize}
    \item $\alpha^\T x_0 = \beta$ and,
    \item if $\gamma^\T x \leq \delta$ is any other non-trivial valid inequality for $C$
      such that $\gamma^\T x_0 = \delta$, then there exists a $\mu > 0$ such
      that $\gamma = \mu \alpha$ and $\beta = \mu \delta$.
  \end{itemize}
  In some cases we omit saying ``with respect to $C$'' if it is clear from context. 
\end{definition}
To get some intuition, if $C$ is a polyhedron and $x \in C$
exposes an inequality, then that inequality is a facet and $x$ is in the
relative interior of the facet.

\begin{remark}
It is very important to note that if there exists a point exposing a valid
inequality of $C$, then $C$ is full dimensional. The reader should keep this
in mind throughout the whole paper.
\end{remark}

\begin{remark}
  For some convex $C$, a point $x \notin C$ can expose a valid
  inequality of $C$.
  For instance, consider $C = \{ x \in \mathbb{R}^2 \st x_1 + x_2 \geq 1 \}$.
  Then $(0,0) \notin C$ and exposes $x_1 + x_2 \geq 0$.

  The name ``exposing'' comes from the concept of \emph{exposed} point.
  To simplify ideas, let us assume that $0 \in \inte(C)$.
  A point $x_0 \in C$ is exposed if there exists a valid inequality of $C$,
  $\alpha^\T x \leq 1$, such that $\{x \in C \st \alpha^\T x = 1\} = \{x_0\}$.
  If $\alpha_0$ is an exposed point of the polar of $C$, $C^\circ = \{\alpha \st
  \alpha^\T x \leq 1,\ \forall x \in C\}$, then there is a valid inequality,
  $x_0^\T \alpha \leq 1$, such that $\{\alpha \in C^\circ \st x_0^\T \alpha
  = 1\} = \{\alpha_0\}$.
  In other words, if $\alpha^\T x \leq 1$ is valid for $C$ (i.e. $\alpha \in
  C^\circ$) and $\alpha^\T x_0 = 1$, then $\alpha = \alpha_0$.
  We see that $x_0$ is a point (direction) that shows that $\alpha_0$ is an
  exposed inequality, or, that $x_0$ \emph{exposes} $\alpha_0$.
\end{remark}
We now show that our definition is indeed helpful to show maximality.
\begin{theorem} \label{thm:exposed_enters}
  Let $K, K' \subseteq \mathbb{R}^n$ be convex sets such that $K \subseteq K'$.
  If $\alpha^\T x \leq \beta$ is
  \begin{itemize}
    \item valid for $K$,
    \item not valid for $K'$, and
    \item exposed by $x_0 \in K$ with respect to $K$,
  \end{itemize}
  then $x_0\in \inte(K')$.
\end{theorem}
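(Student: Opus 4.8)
The plan is to argue by contradiction: suppose $x_0 \notin \inte(K')$. Since $K'$ is convex, a non-interior point can be separated from $\inte(K')$, so there exists a non-trivial inequality $\gamma^\T x \leq \delta$ that is valid for $K'$ and satisfies $\gamma^\T x_0 = \delta$ (a supporting hyperplane of $K'$ at $x_0$; if $x_0 \notin K'$ we can even strictly separate, but a supporting/valid inequality through $x_0$ is all we need). Because $K \subseteq K'$, this inequality $\gamma^\T x \leq \delta$ is also valid for $K$. Moreover it is non-trivial, and it passes through $x_0$, so the second bullet of \Cref{def:expose} applies: since $(\alpha,\beta)$ is exposed by $x_0$ with respect to $K$, there exists $\mu > 0$ with $\gamma = \mu\alpha$ and $\delta = \mu\beta$.

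But then $\gamma^\T x \leq \delta$ is, up to the positive scalar $\mu$, exactly the inequality $\alpha^\T x \leq \beta$; since $\gamma^\T x \leq \delta$ is valid for $K'$, so is $\alpha^\T x \leq \beta$. This contradicts the hypothesis that $\alpha^\T x \leq \beta$ is not valid for $K'$. Hence $x_0 \in \inte(K')$.

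The one delicate point is the very first step: producing the valid inequality of $K'$ through $x_0$ when $x_0 \notin \inte(K')$. If $K'$ is not full-dimensional, a supporting hyperplane in the ambient space need not exist in a useful form; however, $\alpha^\T x \leq \beta$ being valid for $K$ and exposed with respect to $K$ forces (by the remark following \Cref{def:expose}) $K$ to be full-dimensional, hence $K'$ is full-dimensional as well, and the standard supporting-hyperplane argument for convex sets with nonempty interior applies cleanly. The other subtlety — ensuring the separating inequality we extract is \emph{non-trivial} (i.e. $\gamma \neq 0$) — is automatic from full-dimensionality of $K'$: a genuine supporting hyperplane at a boundary point has nonzero normal. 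So the main obstacle is really just invoking the correct form of the separating-hyperplane theorem; the rest is bookkeeping with the definition of exposed.
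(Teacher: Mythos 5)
Your proof is correct and follows essentially the same route as the paper's: assume $x_0 \notin \inte(K')$, take a supporting (valid, tight, non-trivial) inequality of $K'$ at $x_0$, note it is valid for $K$ by inclusion, invoke the exposing property to identify it with a positive multiple of $(\alpha,\beta)$, and derive a contradiction with non-validity for $K'$. The only difference is that you explicitly justify the existence of a non-trivial supporting inequality via the full-dimensionality forced by the exposing hypothesis, a point the paper leaves implicit.
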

\begin{proof}
  As $x_0 \in K$ exposes $\alpha^\T x \leq \beta$, it holds that $\alpha^\T x_0
  = \beta$ and, thus, $x_0$ is in the boundary of $K$.
  Suppose $x_0$ is not in the interior of $K'$.
  Then it must be in the boundary of $K'$ and there is a valid inequality for
  $K'$, $\gamma^\T x \leq \delta$, such that $\gamma^\T x_0 = \delta$.

  As $K \subsetneq K'$, $\gamma^\T x \leq \delta$ is also valid for $K$.
  Given that $(\gamma, \delta)$ is tight at $x_0$ and $x_0$ exposes $(\alpha,
  \beta)$, we conclude that there is a $\mu > 0$ such that $\gamma = \mu \alpha$
  and $\beta = \mu \delta$.
  However, since $\alpha^\T x \leq \beta$ is not valid for $K'$, it follows that
  $\gamma^\T x \leq \delta$ cannot be valid for $K'$.
  This contradiction proves the claim.
\end{proof}

\begin{theorem} \label{thm:exposed_maximality}
  Let $S \subseteq \mathbb{R}^n$ be a closed set and $C \subseteq \mathbb{R}^n$
  a convex $S$-free set.
  Assume that $C = \{ x \in \mathbb{R}^n \st \alpha^T x \leq \beta, \forall
  (\alpha, \beta) \in \Gamma\}$ and that for every $(\alpha, \beta)$
  there is an $x \in S \cap C$ that exposes $(\alpha, \beta)$.
  Then, $C$ is maximal $S$-free.
\end{theorem}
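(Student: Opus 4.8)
The plan is to derive maximality directly from \Cref{thm:exposed_enters}, which already packages the essential argument. Suppose, for contradiction, that $C$ is not maximal $S$-free. Then there exists a convex $S$-free set $C'$ with $C \subseteq C'$ and $C' \setminus C \neq \emptyset$; in particular $C'$ is strictly larger, so at least one of the defining inequalities $(\alpha, \beta) \in \Gamma$ of $C$ must fail to be valid for $C'$ (if every $(\alpha,\beta) \in \Gamma$ were valid for $C'$, then $C' \subseteq C$, a contradiction). Fix such an $(\alpha, \beta)$.

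Next I would invoke the hypothesis: there is a point $x \in S \cap C$ that exposes $(\alpha, \beta)$ with respect to $C$. Now apply \Cref{thm:exposed_enters} with $K = C$, $K' = C'$: the inequality $\alpha^\T x \leq \beta$ is valid for $C$, not valid for $C'$, and exposed by $x \in C$, so the theorem yields $x \in \inte(C')$. But $x \in S$, which contradicts the fact that $C'$ is $S$-free, i.e. $\inte(C') \cap S = \emptyset$. This contradiction shows no such $C'$ exists, so $C$ is maximal $S$-free.

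The only subtlety worth spelling out is the first step — that a strictly larger convex set must violate one of the defining inequalities of $C$. This follows because $C = \bigcap_{(\alpha,\beta) \in \Gamma} \{x : \alpha^\T x \leq \beta\}$, so if all those inequalities remained valid for $C'$ we would get $C' \subseteq C$, forcing $C' = C$ and contradicting strict containment. I do not anticipate a real obstacle here; the proof is essentially a one-line reduction to \Cref{thm:exposed_enters}, with the $S$-freeness of $C'$ used exactly once to close the loop. One should perhaps note in passing that the exposing point being in $S \cap C$ (rather than merely in $C$) is precisely what makes the contradiction with $S$-freeness work — this is the difference between \Cref{thm:exposed_enters} and the maximality conclusion.
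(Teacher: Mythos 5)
Your proof is correct and follows essentially the same route as the paper: both arguments reduce to \Cref{thm:exposed_enters} applied with $K=C$ and a strictly larger $S$-free $K'$, using the exposing point in $S\cap C$ to contradict $S$-freeness. The only cosmetic difference is that the paper takes $C'=\conv(C\cup\{\xlp\})$ for an arbitrary $\xlp\notin C$ and picks the separating inequality directly, whereas you start from an arbitrary strictly larger $C'$ and argue one defining inequality must fail; these are logically interchangeable.
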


\begin{proof}
  To show that $C$ is maximal we are going to show that for every $\xlp \notin
  C$, $S \cap \inte(\conv(C \cup \{\xlp \}))$ is nonempty.

  Let $\xlp \notin C$ and let $(\alpha, \beta) \in \Gamma$ be a separating
  inequality, i.e., $\alpha^\T \xlp > \beta$. Let $C' = \conv(C \cup \{\xlp\})$.

  By hypothesis, there is an $x_0 \in S \cap C$ that exposes $(\alpha, \beta)$.
  Since $(\alpha, \beta)$ is valid for $C$ and not for $C'$,
  \Cref{thm:exposed_enters} implies that  $x_0 \in \inte(C')$.
\end{proof}

With minor modifications one can also get the following sufficient condition for
maximality with respect to a hyperplane.
\begin{theorem} \label{thm:exposed_maximality_wrt}
  Let $S \subseteq \mathbb{R}^n$ be a closed set, $H$ be an affine hyperplane,
  and $C \subseteq \mathbb{R}^n$ be a convex $S$-free set.
  Assume that $C = \{ x \in \mathbb{R}^n \st \alpha^T x \leq \beta, \forall
  (\alpha, \beta) \in \Gamma\}$ and that for every $(\alpha, \beta)$
  there is an $x \in S \cap C \cap H$ that exposes $(\alpha, \beta)$.
  Then, $C$ is maximal $S$-free with respect to $H$.
\end{theorem}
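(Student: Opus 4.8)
The plan is to run the proof of \Cref{thm:exposed_maximality} essentially verbatim, keeping track of the hyperplane $H$ at each step. So I would take an arbitrary convex $C' \supseteq C$ that is $S$-free with respect to $H$ and argue by contradiction: assume $C' \cap H \not\subseteq C \cap H$ and choose $\xlp \in (C' \cap H) \setminus (C \cap H)$. Since $\xlp$ already lies on $H$, the only way it can fail to be in $C \cap H$ is $\xlp \notin C$, so some defining inequality $(\alpha,\beta)\in\Gamma$ is violated at $\xlp$, i.e., $\alpha^\T \xlp > \beta$.

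Next I would form the enlargement $C'' \coloneqq \conv(C \cup \{\xlp\})$, which is convex and satisfies $C'' \subseteq C'$ because $C'$ is convex and contains $C \cup \{\xlp\}$. The inequality $(\alpha,\beta)$ is valid for $C$ but fails for $C''$ (it is violated at $\xlp \in C''$), and by hypothesis there is some $x_0 \in S \cap C \cap H$ exposing $(\alpha,\beta)$ with respect to $C$. Applying \Cref{thm:exposed_enters} with $K = C$ and $K' = C''$ then yields $x_0 \in \inte(C'')$, where the interior is taken in $\mathbb{R}^n$.

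The one genuinely new point is to pass from this ambient-interior statement to the induced topology on $H$. I would pick $r > 0$ with $B_r(x_0) \subseteq C''$; then $B_r(x_0) \cap H$ is a neighbourhood of $x_0$ relative to $H$ and is contained in $C'' \cap H \subseteq C' \cap H$, so $x_0$ belongs to the interior of $C' \cap H$ relative to $H$. Since also $x_0 \in S \cap H$, this contradicts $C'$ being $S$-free with respect to $H$. Hence no such $\xlp$ exists, $C' \cap H \subseteq C \cap H$, and $C$ is maximal $S$-free with respect to $H$.

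I expect the only delicate point to be this last transfer of topologies, i.e., making precise that an interior point of $C''$ in $\mathbb{R}^n$ that happens to lie on $H$ is automatically an interior point of $C' \cap H$ relative to $H$, and observing that this is exactly where the requirement $\xlp \in H$ — hence $x_0 \in H$ in the hypothesis — gets used. Everything else, including the fact that \Cref{thm:exposed_enters} needs only convexity of $C$ and $C''$ (so no closedness claim about $\conv(C \cup \{\xlp\})$ is required), is a routine transcription of the proof of \Cref{thm:exposed_maximality}.
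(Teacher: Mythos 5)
Your proof is correct and is essentially the paper's intended argument: the paper proves Theorem~\ref{thm:exposed_maximality} and states that Theorem~\ref{thm:exposed_maximality_wrt} follows ``with minor modifications,'' and your write-up supplies exactly those modifications, including the one genuinely new step (passing from $x_0 \in \inte(C'')$ in $\mathbb{R}^n$ to relative interiority of $C'\cap H$ in the induced topology on $H$, which is where the hypothesis $x_0 \in H$ is used).
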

\begin{remark}
  Points that expose inequalities are also called \emph{smooth} points.
  A \emph{smooth} point of $C$ is a point for which there exists a unique
  supporting hyperplane to $C$ at it~\cite{Goberna2010}.
  Therefore, if $x_0 \in C$, then $x_0$ exposes some valid inequality of $C$, if
  and only if, $x_0$ is a smooth point of $C$.

  A related concept is that of \emph{blocking points}~\cite{Basu2019}.
  However, blocking points need not to be smooth points in general, that is,
  they do not need to expose any inequality.
  As seen in Theorem~\ref{thm:exposed_maximality} we use exposing points to
  determine maximality of a convex $S$-free set.
  Similarly, in the context of lifting~\cite{Conforti2011}, blocking points are
  used to determine maximality of a translated convex cone $S\times
  \mathbb{Z}_+$-free set.
\end{remark}
There is another phenomenon that does not occur when $S = \mathbb{Z}^n$.
If $S$ is a quadratic set, the inequalities of a maximal $S$-free set might not
be exposed by any point of $S$.
For instance, consider $S = \{ (x,y) \in \RR^2 \st x^2 + 1 \leq y^2 \}$.
The boundary of $S$ is a hyperbola with asymptotes $x = \pm y$.
Thus, $C = \{ (x,y) \in \RR^2 \st x \geq |y| \}$ is a maximal $S$-free set,
because its inequalities are asymptotes of $S$, but they are not exposed by
points of $S$.
This phenomenon also occurs when $S = \mathbb{Z}^n \cap K$, with $K$
convex~\cite{Moran2011}.
However, in that case, it also turns out that maximal $S$-free sets are
polyhedral and their constructions rely on the concept of a facet (see for
instance~\cite[Theorem 3.2]{Moran2011}) which we do not have access to in the
general case.
In our case, we extend the definition of what it means for an inequality to be exposed in order to handle a situation like the one above. We do this by interpreting that asymptotes are exposed
``at infinity''.

\begin{definition} \label{def:expose_at_infinity}
  Given a convex set $C \subseteq \mathbb{R}^n$ with non-empty
  recession cone and a valid inequality $\alpha^\T x \leq \beta$, we say that
  a sequence $(x_n)_n \subseteq \mathbb{R}^n$ \emph{exposes} $(\alpha, \beta)$
  \emph{at infinity} with respect to $C$ if
  \begin{itemize}
    \item $\|x_n\| \to \infty$,
    \item $\frac{x_n}{\|x_n\|} \to d \in \rec(C)$,
    \item $d$ exposes $\alpha^\T x \leq 0$ with respect to $\rec(C)$, and
    \item there exists $y$ such that $\alpha^\T y = \beta$ such that
      $\dist(x_n, y + \langle d \rangle) \to 0$.
  \end{itemize}
  As before, we omit saying ``with respect to $C$'' if it is clear from context. 
\end{definition}
Using this definition, we can prove an analogous result to \Cref{thm:exposed_enters} for inequalities exposed at infinity.

\begin{theorem} \label{thm:exposed_at_infinity_enters}
  Let $K, K' \subseteq \mathbb{R}^n$ be convex sets
  such that $K \subseteq K'$.
  If $\alpha^\T x \leq \beta$ is
  \begin{itemize}
    \item valid for $K$,
    \item not valid for $K'$, and
    \item exposed at infinity by $(x_n)_n$ with respect to $K$,
  \end{itemize}
  then there exists a $k$ such that $x_k\in \inte(K')$.
\end{theorem}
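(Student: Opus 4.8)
The plan is to mimic the proof of \Cref{thm:exposed_enters}: assume for contradiction that no tail element $x_k$ lies in $\inte(K')$, so every $x_n$ is in the boundary of $K'$, and then extract a supporting inequality of $K'$ that, using the ``exposed at infinity'' hypotheses, is forced to coincide (up to scaling) with $\alpha^\T x \le \beta$ — contradicting the assumption that $(\alpha,\beta)$ is not valid for $K'$. The subtlety compared to the finite case is that the points witnessing tightness now march off to infinity along the direction $d \in \rec(C)$, so the supporting hyperplanes at the $x_n$ need not stabilize at a single inequality; we will need a compactness/limiting argument on the normals.

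In detail, I would first normalize: at each $x_n$ in the boundary of $K'$ pick a supporting inequality $\gamma_n^\T x \le \delta_n$ with $\|\gamma_n\| = 1$ and $\gamma_n^\T x_n = \delta_n$. Since $K \subseteq K'$, each $(\gamma_n,\delta_n)$ is also valid for $K$. By compactness of the unit sphere, pass to a subsequence with $\gamma_n \to \gamma$, $\|\gamma\|=1$. Dividing $\gamma_n^\T x \le \delta_n$ through by $\|x_n\|$, evaluating along the recession direction, and using $\frac{x_n}{\|x_n\|} \to d$, one gets $\gamma^\T d \le 0$; combined with $d \in \rec(K) \subseteq \rec(K')$ and validity, in fact $\gamma^\T d = 0$, i.e. $\gamma^\T x \le 0$ is a valid inequality for $\rec(K)$ that is tight at $d$. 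Now the third bullet of \Cref{def:expose_at_infinity} — $d$ exposes $\alpha^\T x \le 0$ with respect to $\rec(K)$ — forces $\gamma = \mu \alpha$ for some $\mu > 0$ (after we also argue $\gamma \ne 0$, which holds since $\|\gamma\| = 1$).

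It remains to pin down the right-hand side, i.e. to show $\beta = \mu \delta$ for the appropriate limit $\delta$ of (a scaling of) the $\delta_n$. Here the fourth bullet is essential: there is a $y$ with $\alpha^\T y = \beta$ and $\dist(x_n, y + \langle d\rangle) \to 0$, so write $x_n = y + t_n d + r_n$ with $\|r_n\| \to 0$. Then $\delta_n = \gamma_n^\T x_n = \gamma_n^\T y + t_n\, \gamma_n^\T d + \gamma_n^\T r_n$. Since $\gamma_n^\T d \to \gamma^\T d = 0$ but $t_n$ may blow up, I would instead exploit that $\gamma_n^\T d \le 0$ together with validity of $(\gamma_n,\delta_n)$ on $K$ at the points $y + t d \in \rec$-translates inside $K$ (for large $t$, since $d \in \rec(K)$ and $K$ will contain such a ray through a point near $y$), to conclude $\gamma_n^\T d = 0$ exactly for the relevant supporting inequalities, or at least that the term $t_n \gamma_n^\T d$ is controlled. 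Cleanly: because $\gamma^\T d = 0$ and $(\gamma,\delta)$ (the limit, with $\delta = \lim \delta_n$, which exists and is finite after noting $\delta_n = \gamma_n^\T x_n$ and that $x_n$ stays near the line $y + \langle d\rangle$ on which $\gamma_n$ is asymptotically constant) is valid for $K$, we get $\gamma^\T y \le \delta$ and $\gamma^\T y = \delta$ by taking limits in $\delta_n = \gamma_n^\T y + o(1)$. Hence $\mu\alpha^\T y = \gamma^\T y = \delta$, so $\delta = \mu\beta$. Thus $\gamma^\T x \le \delta$ is just a positive multiple of $\alpha^\T x \le \beta$; being valid for $K'$ (as a limit of supporting inequalities of the closed convex set $K'$), it makes $\alpha^\T x \le \beta$ valid for $K'$ — the desired contradiction.

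The main obstacle I anticipate is exactly this last right-hand-side bookkeeping: controlling the product $t_n\,\gamma_n^\T d$ when $t_n \to \infty$ while $\gamma_n^\T d \to 0$. The fix is to argue that for each $n$ the supporting inequality can be chosen (or the limit taken) so that $\gamma_n^\T d = 0$ holds on the nose — using that $d \in \rec(K)$ means $K$ contains a ray in direction $d$, and a supporting hyperplane of the convex set $K'$ at a boundary point near that ray must be constant along $d$ once we pass to the limit — making the troublesome term vanish and leaving only the convergent pieces. A secondary point to be careful about is ensuring $\delta_n$ (equivalently the limit $\delta$) is finite and that $K'$ is closed enough for limits of supporting inequalities to remain valid; if $K'$ is not assumed closed we pass to $\cl(K')$, noting $\inte(\cl(K')) = \inte(K')$ for convex sets, which is harmless for the conclusion.
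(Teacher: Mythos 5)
Your overall strategy coincides with the paper's: assume no $x_k$ lies in $\inte(K')$, pick normalized supporting inequalities $(\gamma_k,\delta_k)$ of $K'$ at the $x_k$, extract a limit $(\gamma,\delta)$, use tightness of $\gamma^\T x\le 0$ at $d$ together with the third bullet of \Cref{def:expose_at_infinity} to force $\gamma=\mu\alpha$, and use the point $y$ from the fourth bullet to relate $\delta$ to $\beta$. The gap sits exactly where you flag the ``main obstacle,'' and the fix you propose does not work: a supporting hyperplane of $K'$ at a boundary point receding to infinity along $d$ only satisfies $\gamma_k^\T d\le 0$, not $\gamma_k^\T d=0$ ``on the nose.'' For instance, with $K'=\{(u,v)\st v\ge e^{-u}\}$, $d=(1,0)$ and $x_k=(k,e^{-k})$, the unique supporting normal at $x_k$ has $\gamma_k^\T d=-e^{-k}/\sqrt{1+e^{-2k}}<0$ for every $k$; orthogonality holds only in the limit, which gives you no control over the pre-limit products $t_k\,\gamma_k^\T d$. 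Consequently the two-sided identity $\delta=\mu\beta$ you aim for is not justified as written (and is in fact more than the argument requires).

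The paper's resolution is a one-sided estimate: write $x_k=(d^\T x_k)\,d+w_k$, note $w_k\to y-(d^\T y)d$ by the fourth bullet, and observe that for large $k$ the troublesome term $(d^\T x_k)\,\gamma_k^\T d$ is a product of a positive and a nonpositive factor, hence can simply be dropped, giving $\delta_k\le\gamma_k^\T x_k\le\gamma_k^\T w_k$ and in the limit $\delta\le\mu\alpha^\T y=\mu\beta$. This inequality already yields the contradiction: $\gamma^\T x\le\delta$ is valid for $K'$, so $\mu\alpha^\T x\le\delta\le\mu\beta$ on $K'$ and $\alpha^\T x\le\beta$ is valid for $K'$ after dividing by $\mu>0$. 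If you rewrite your last paragraph around this one-sided bound, your proof closes. One further bookkeeping item: you normalize $\|\gamma_k\|=1$, which conveniently makes $\mu>0$ automatic, but then the existence of a finite limit $\delta=\lim\delta_k$ is not free, since $\delta_k=\gamma_k^\T x_k$ with $\|x_k\|\to\infty$; it follows from the same decomposition (bounded above by $\gamma_k^\T w_k$, below by $\gamma_k^\T x^0$ for any fixed $x^0\in K$), but it must be said. The paper instead normalizes $\|(\gamma_k,\delta_k)\|=1$, which makes convergence immediate at the cost of having to exclude the trivial limit $0^\T x\le -1$ via the nonemptiness of $K'$.
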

\begin{proof}
  Suppose that for all $k$, $x_k$ is not in the interior of $K'$.
  Then, for each $k$ there exists a non-trivial valid inequality for $K'$,
  $\gamma_k^\T x \leq \delta_k$, such that $\gamma_k^\T x_k \geq \delta_k$.
  We can assume without loss of generality that $\|(\gamma_k, \delta_k)\| = 1$.
  Hence, going through a subsequence if necessary, there exist $\gamma \in
  \mathbb{R}^n$ and $\delta \in \mathbb{R}$ such that $\gamma_k \to \gamma$ and
  $\delta_k \to \delta$ when $k \to \infty$ and $\|(\gamma, \delta)\| = 1$.
  Note that the inequality $(\gamma, \delta)$ is valid for $K'$.
  The idea is to show that $(\gamma, \delta)$ defines the same inequality as $(\alpha, \beta)$.

  As $d = \lim_{k \to \infty} \frac{x_k}{\|x_k\|}  \in \rec(K)$ (see Definition~\ref{def:expose_at_infinity}) and $(\gamma,
  \delta)$ is valid for $K' \supseteq K$, then $\gamma^\T x \leq 0$ is valid for
  $\rec(K)$.
  In particular, $\gamma^\T d \leq 0$. On the other hand,
  \[
    \frac{\delta_k}{\|x_k\|} \leq \gamma_k^\T \frac{x_k}{\|x_k\|} \text{ implies
    } 0 \leq \gamma^\T d,
  \]
  We conclude that $\gamma^\T d = 0$.
  As $d$ exposes $\alpha^\T x \leq 0$ with respect to $\rec(K)$ , there exists a $\mu \geq 0$ such that
  $\gamma = \mu \alpha$.
  Note that we cannot conclude that $\mu > 0$ since, at this point, we do not
  know that $(\gamma, \delta)$ is a non-trivial inequality (e.g. it could be
  $0^\T x \leq 1$).

  Let $y$ be such that $\alpha^\T y = \beta$ and $\dist(x_k, y + \langle
  d \rangle) \to 0$, which exists by \Cref{def:expose_at_infinity}.
  Let $w_k = x_k - d^\T x_k d$.
  We have that
  \[
    \dist(x_k, y + \langle d \rangle) = \dist(x_k - y, \langle d \rangle)
    = \|x_k - y - d^\T (x_k - y) d \| = \| w_k - (y - d^\T y d) \|.
  \]
  Thus, $w_k \to y - d^\T y d$ as $k \to \infty$.

  Since each $(\gamma_k, \delta_k)$ is valid for $K'$, $\gamma_k^\T d \leq 0$. Additionally, for large enough $k$ it must hold that $d^\T x_k > 0$.
  Therefore,
  \[
    \delta_k \leq \gamma_k^\T x_k = \gamma_k^\T (d^\T x_k d + w_k) \leq
    \gamma_k^\T w_k.
  \]
  Computing the limit when $k \to \infty$ we get,
  \[
    \delta \leq \mu \alpha^\T (y - d^\T y d) = \mu \alpha^\T y = \mu \beta.
  \]
  If $\mu = 0$, then $\gamma = 0$ and $\delta \leq 0$.
  As $\|(\gamma, \delta)\| = 1$, it follows that $\delta = -1$, which cannot be
  since $(\gamma, \delta)$ is a valid inequality for $K'$ and $K'$ is, by
  hypothesis, non-empty.
  We conclude that $\mu > 0$ and that $\mu \alpha^\T x \leq \mu \beta$ is valid
  for $K'$, which implies that $\alpha^\T x \leq \beta$ is valid for $K'$,
  contradicting the hypothesis of the theorem.
\end{proof}

With the previous results it is straightforward to prove the following generalization of \Cref{thm:exposed_maximality_wrt}.

\begin{theorem} \label{thm:mixed_exposed_maximality_wrt}
  Let $S \subseteq \mathbb{R}^n$ be a closed set, $H$ be an affine hyperplane,
  and $C \subseteq \mathbb{R}^n$ be a convex $S$-free set.
  Assume that $C = \{ x \in \mathbb{R}^n \st \alpha^T x \leq \beta, \forall
  (\alpha, \beta) \in \Gamma\}$ and that for every $(\alpha, \beta)$
  there is, either, an $x \in S \cap C \cap H$ that exposes $(\alpha, \beta)$,
  or sequence $(x_n)_n \subseteq S \cap H$ that exposes $(\alpha, \beta)$ at
  infinity.
  Then, $C$ is maximal $S$-free with respect to $H$.
\end{theorem}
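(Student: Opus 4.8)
The plan is to run the same contradiction argument used in the proof of \Cref{thm:exposed_maximality}, but now dispatching each defining inequality of $C$ to whichever of the two ``entering'' lemmas, \Cref{thm:exposed_enters} or \Cref{thm:exposed_at_infinity_enters}, is applicable to it, and then translating their conclusion (``the relevant point of $S$ lies in the ambient interior of the enlarged set'') into the conclusion demanded by the definition of $S$-free \emph{with respect to $H$} (``that point lies in the interior of the slice, computed inside $H$'').

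First I would unfold the definition of maximality: fix any convex $C' \supseteq C$ that is $S$-free with respect to $H$, and aim to show $C' \cap H \subseteq C \cap H$. Suppose not, and pick $\xlp \in (C' \cap H)\setminus(C\cap H)$; since $\xlp \in H$ this forces $\xlp \notin C$, so by the representation $C = \{x \st \alpha^\T x \leq \beta,\ (\alpha,\beta)\in\Gamma\}$ there is some $(\alpha,\beta)\in\Gamma$ with $\alpha^\T \xlp > \beta$. Set $C'' \coloneqq \conv(C \cup \{\xlp\})$. By convexity of $C'$ we have $C \subseteq C'' \subseteq C'$, and because $\xlp \in C''$ the inequality $(\alpha,\beta)$ is valid for $C$ but not for $C''$.

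Next I would split according to the hypothesis attached to $(\alpha,\beta)$. If some $x_0 \in S \cap C \cap H$ exposes $(\alpha,\beta)$, then \Cref{thm:exposed_enters} applied with $K=C$ and $K'=C''$ gives $x_0 \in \inte(C'')$. If instead a sequence $(x_n)_n \subseteq S\cap H$ exposes $(\alpha,\beta)$ at infinity (with respect to $C$), then \Cref{def:expose_at_infinity} already guarantees $\rec(C)\neq\emptyset$, so \Cref{thm:exposed_at_infinity_enters} applied with $K=C$, $K'=C''$ yields an index $k$ with $x_k \in \inte(C'')$; relabel $x_0 \coloneqq x_k$. In either case we have manufactured a point $x_0 \in S \cap H$ lying in $\inte(C'')$.

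To finish, I would observe that $\inte(C'')$ is a nonempty open subset of $\mathbb{R}^n$ containing $x_0$, hence $\inte(C'') \cap H$ is a neighborhood of $x_0$ in the topology of $H$ induced from $\mathbb{R}^n$; therefore $x_0$ lies in the $H$-interior of $C'' \cap H$, and a fortiori in the $H$-interior of $C' \cap H$ since $C'' \cap H \subseteq C' \cap H$. As $x_0 \in S \cap H$, this contradicts the assumption that $C'$ is $S$-free with respect to $H$. Consequently no such $\xlp$ exists and $C$ is maximal $S$-free with respect to $H$. There is no genuine difficulty here; the only step worth spelling out is this last passage from the ambient interior produced by the two lemmas to the relative interior inside $H$ required by the definition, and it is harmless precisely because the ambient interior in question is already nonempty.
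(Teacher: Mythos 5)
Your proposal is correct and follows exactly the route the paper intends (the paper omits this proof as a ``straightforward'' combination of \Cref{thm:exposed_enters} and \Cref{thm:exposed_at_infinity_enters}, mirroring the argument given for \Cref{thm:exposed_maximality}): form $\conv(C\cup\{\xlp\})$, dispatch the violated inequality to the appropriate entering lemma, and land a point of $S\cap H$ in the interior of the enlarged set. Your final observation --- that a point of the ambient interior lying on $H$ is automatically in the interior relative to the induced topology on $H$, which is what the definition of $S$-free with respect to $H$ requires --- is the only detail the paper leaves implicit, and you handle it correctly.
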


Another useful result for studying maximal $S$-free sets is the following (see
also \cite[Lemma 6.17]{ConfortiCornuejolsZambelli2014}).
It states that in some cases we can project $S$ into a lower dimensional space and
find maximal sets that are free for the projection.
This result is also useful for visualizing higher dimensional $S$-free sets.
\begin{theorem} \label{thm:projection}
  Let $C$ be a full dimensional closed convex cone with lineality space $L$.
  Let $S \subseteq \mathbb{R}^n$ be closed.
  Then, $C$ is maximal $S$-free if and only if $( C\cap L^{\perp} )$ is maximal
  $ \cl(\proj_{L^{\perp}} S  )$-free.
\end{theorem}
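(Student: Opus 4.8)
The plan is to prove both directions by exploiting the product structure of a cone with lineality space $L$. Since $C$ is a full-dimensional closed convex cone with lineality space $L$, we can write $C = (C \cap L^\perp) \oplus L$, i.e.\ $C = \{u + \ell \st u \in C\cap L^\perp,\ \ell \in L\}$, and $C \cap L^\perp$ is a full-dimensional (in $L^\perp$) closed convex cone containing no line. Write $\pi = \proj_{L^\perp}$ and $\bar S = \cl(\pi S)$. The two observations that drive everything are: (i) $\inte(C)$, computed in $\mathbb{R}^n$, equals $\{u + \ell \st u \in \relinte_{L^\perp}(C\cap L^\perp),\ \ell \in L\}$, so $\inte(C) = \pi^{-1}(\relinte_{L^\perp}(C\cap L^\perp))$; and (ii) for any set $T$, $\inte(C) \cap T = \emptyset$ in $\mathbb{R}^n$ iff $\relinte_{L^\perp}(C\cap L^\perp) \cap \pi T = \emptyset$ in $L^\perp$, and since $\relinte_{L^\perp}(C\cap L^\perp)$ is open in $L^\perp$, this last condition is equivalent to being disjoint from $\cl(\pi T)$. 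Taking $T = S$ gives immediately that $C$ is $S$-free iff $C\cap L^\perp$ is $\bar S$-free.

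For the ``only if'' direction, suppose $C\cap L^\perp$ is not maximal $\bar S$-free in $L^\perp$: there is a convex $\bar S$-free $D \subsetneq$-strict superset $D \supsetneq C\cap L^\perp$ inside $L^\perp$ (we may take $D$ closed, since the closure of an $\bar S$-free set is $\bar S$-free when $\bar S$ is closed — or simply keep $D$ as a witness set). Then $D \oplus L$ is a convex superset of $C$, is still $\bar S$-free-lifted, hence by the equivalence above it is $S$-free, and it strictly contains $C$; so $C$ is not maximal $S$-free. For the ``if'' direction, suppose $C$ is not maximal: there is a convex $S$-free $C' \supsetneq C$. The key subtlety is that $C'$ need not respect the product structure, so I would first argue one can enlarge $C'$ to $C' + L$, which is still convex, still contains $C$, and is still $S$-free: this last point needs that $\bar S$ (equivalently $S$ modulo the disjointness criterion) is compatible with translation along $L$, which holds because $\pi(C'+L) = \pi(C')$ and $S$-freeness only depends on $\pi$ of the set by observation (ii). Hence WLOG $C' = C' + L$, so $C' = \pi(C') \oplus L$ with $\pi(C')$ convex in $L^\perp$, and $\pi(C') \supseteq C\cap L^\perp$; strictness of $C' \supsetneq C$ forces strictness $\pi(C') \supsetneq C\cap L^\perp$ (if $\pi(C') = C\cap L^\perp$ then $C' = C$). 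Finally $\pi(C')$ is $\bar S$-free by the equivalence, contradicting maximality of $C\cap L^\perp$.

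The main obstacle I anticipate is the careful handling of interiors versus relative interiors, and the passage between ``$\inte(C)\cap S = \emptyset$'' and ``$\relinte(C\cap L^\perp)\cap \cl(\pi S) = \emptyset$'' — one must be precise that the relevant interior of the full-dimensional cone $C$ in $\mathbb{R}^n$ decomposes as the relative interior of $C\cap L^\perp$ (taken within $L^\perp$) plus all of $L$, and that intersecting a set open in $L^\perp$ with $\pi S$ is empty iff intersecting it with the closure $\cl(\pi S)$ is empty. A secondary point requiring care is justifying that in both non-maximality arguments we may replace the witness superset by one of the form $D \oplus L$ without losing $S$-freeness; this is exactly where we use that $S$-freeness of a cone-like set is governed entirely by its projection onto $L^\perp$, which is itself a direct consequence of observation (ii). Once these topological lemmas are in place the rest is the routine bookkeeping sketched above, and no quadratic structure of $S$ is used — only that $S$ is closed.
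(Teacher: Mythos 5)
Your overall architecture coincides with the paper's: both proofs rest on the decomposition $C=(C\cap L^\perp)+L$, the identity $\inte(D+L)=\relinte(D)+L$ for convex full-dimensional $D\subseteq L^\perp$ (the paper cites Rockafellar, Corollary~6.6.2 for this), and the observation that a set open in $L^\perp$ is disjoint from $\proj_{L^\perp}S$ if and only if it is disjoint from $\cl(\proj_{L^\perp}S)$. Your ``only if'' direction is sound as written: there the enlarged witness $D\oplus L$ is $L$-invariant by construction, so your observation~(ii) genuinely applies to it, shows it is $S$-free, and yields a strictly larger $S$-free superset of $C$.

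The one genuine soft spot is in the ``if'' direction, at the step ``$C'+L$ is still $S$-free \ldots\ because $S$-freeness only depends on $\pi$ of the set by observation~(ii).'' Observation~(ii), as you set it up, is a statement about sets of the form $D\oplus L$; it says nothing about an arbitrary convex $S$-free $C'$ that need not be $L$-invariant, and invoking it for $C'+L$ presupposes exactly what you are trying to establish (that $C'+L$ is $S$-free). For a general convex set, $S$-freeness is certainly not determined by its projection onto $L^\perp$. The repair is the convexity fact the paper uses instead: since $C'\supseteq C$ and $C$ recedes in every direction of $L$, the subspace $L$ lies in the recession cone, hence in the lineality space, of $\cl(C')$; therefore $\cl(C')+L=\cl(C')$ and
\[
\inte(C')\subseteq\inte(C'+L)\subseteq\inte\bigl(\cl(C')+L\bigr)=\inte\bigl(\cl(C')\bigr)=\inte(C'),
\]
so $\inte(C'+L)=\inte(C')$ and $S$-freeness is preserved. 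Equivalently, take the witness $C'$ closed from the outset, as the paper does, and note that its lineality space must then contain $L$, so $C'=C'+L$ automatically. With that substitution the remaining bookkeeping, including the transfer of strictness to $\pi(C')\supsetneq C\cap L^\perp$, goes through and matches the paper's argument.
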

\begin{proof}
  \noindent $(\Rightarrow)$
  If $ C\cap L^{\perp}$ is not maximal, let $K \subseteq L^{\perp}$ be a
  $  \cl(\proj_{L^{\perp}} S   )$-free set that contains it.
  Then, $K + L \supsetneq C$.
  Since $C$ is maximal $S$-free, there exists an $x \in S$ such that $x \in
  \inte(K+L) = \inte(K) + \inte(L)$ (\cite[Corollary 6.6.2]{Rockafellar1970}).
  That is, $x = k + \ell$ with $k \in \inte(K)$ and $\ell \in L$.
  Thus, $x - \ell \in K \subseteq L^{\perp}$ which implies that $x - \ell \in
  \proj_{L^{\perp}} S  $ and contradicts the fact that $K$ is
  $\cl(\proj_{L^{\perp}} S)$-free.

  \noindent $(\Leftarrow)$
  By contradiction, suppose that $C$ is not maximal $S$-free and let $K
  \supsetneq C$ be a closed convex $S$-free set. Then $ K \cap \L^{\perp}
  \supsetneq C \cap \L^{\perp}$, which implies that $K \cap \L^{\perp}$ is not
  $\cl(\proj_{L^{\perp}} S  )$-free. This implies that $\exists \tilde{s} \in
  \cl(\proj_{L^{\perp}} S  ) \cap \inte(K \cap \L^{\perp})$. Moreover, we can
  further assume  $\tilde{s} \in \proj_{L^{\perp}} S  \cap \inte(K \cap
  \L^{\perp})$, as any sequence contained in $\proj_{L^{\perp}} S$ converging to
  an element of  $\cl(\proj_{L^{\perp}} S  ) \cap \inte(K \cap \L^{\perp})$ must
  have an element in $ \proj_{L^{\perp}} S  \cap \inte(K \cap \L^{\perp})$.

  By the definition of  orthogonal projection, there must exist   $s \in S$ and
  $\ell \in L$ such that $\tilde{s} = s - \ell$. Thus, we obtain $s - \ell \in
  \inte( K \cap L^{\perp} )$, i.e.  \[ s  \in    \inte( K \cap L^{\perp} ) + L.
  \] Since the lineality space of $K$ must contain $L$,  we conclude $s \in
  \inte(K)$; a contradiction with $K$ being $S$-free.
\end{proof}

\section{Maximal quadratic-free sets for homogeneous quadratics} \label{sec:maximalhomo}
In this section we construct maximal $\shomo$-free sets that contain a vector $\bar x \not\in \shomo$ for $\shomo = \{ x \in
\mathbb{R}^p \st x^\T Q x \leq 0 \}$. This is our building block towards maximality in the general case.
After a change of variable, we can assume that
\begin{align*}
  \shomo
  &= \{ (x,y,z) \in \mathbb{R}^{n+m+l} \st \sum_{i = i}^n x_i^2 - \sum_{i = i}^m
  y^2_i \leq 0 \} \\
  &= \{ (x,y) \in \mathbb{R}^{n+m} \st \sum_{i = i}^n x_i^2 - \sum_{i = i}^m
  y^2_i \leq 0 \} \times \mathbb{R}^l.
\end{align*}

Thus, we will only focus on $\shomo = \{ (x,y) \in \mathbb{R}^{n+m} \st \sum_{i = i}^n x_i^2 - \sum_{i = i}^m y^2_i \leq 0 \}$ and assume we are given $(\bar x, \bar y )$ such that $\|\bar x\|^2 > \| \bar y\|^2$.

\begin{remark}\label{remark:transformationhomo}
  The transformation used to bring $\shomo$ to the last ``diagonal'' form is, in general, not unique. Nonetheless, maximality of the $\shomo$-free sets is preserved, as there always is such transformation that is one-to-one. In \Cref{sec:transformations} we discuss the effect different choices of this transformation have.
\end{remark}

\subsection{Removing strict convexity matters}
A simple way of obtaining an $\shomo$-free set is via a concave underestimator of $f(x,y) = \sum_{i = i}^n x_i^2 - \sum_{i = i}^m
y^2_i = \|x\|^2 - \|y\|^2$ directly. It is not hard to see that such an underestimator, tight at $(\xlp, \ylp)$, is given by $\|\xlp\|^2
+ 2 \|\xlp\|(x - \xlp) - \|y\|^2$.
The concave underestimator yields the $\shomo$-free set $\{ (x,y) \in
\mathbb{R}^{n+m} \st \|\xlp\|^2 + 2 \|\xlp\|(x - \xlp) - \|y\|^2 \geq 0 \}$.
However, simple examples show that such an $\shomo$-free set is not maximal.
\begin{example}\label{ex:simple}
  The case $n=m=1$ with $\xlp = 3$ yields the $\shomo$-free set
  \[ C = \{ (x,y) \in
  \mathbb{R}^{2} \st -9 +  6 x - y^2 \geq 0 \} \]
  In \Cref{fig:simpleexample} we can see that the set is not maximal $\shomo$-free.
\end{example}

\begin{figure}
  \centering
  \includegraphics[scale=0.3]{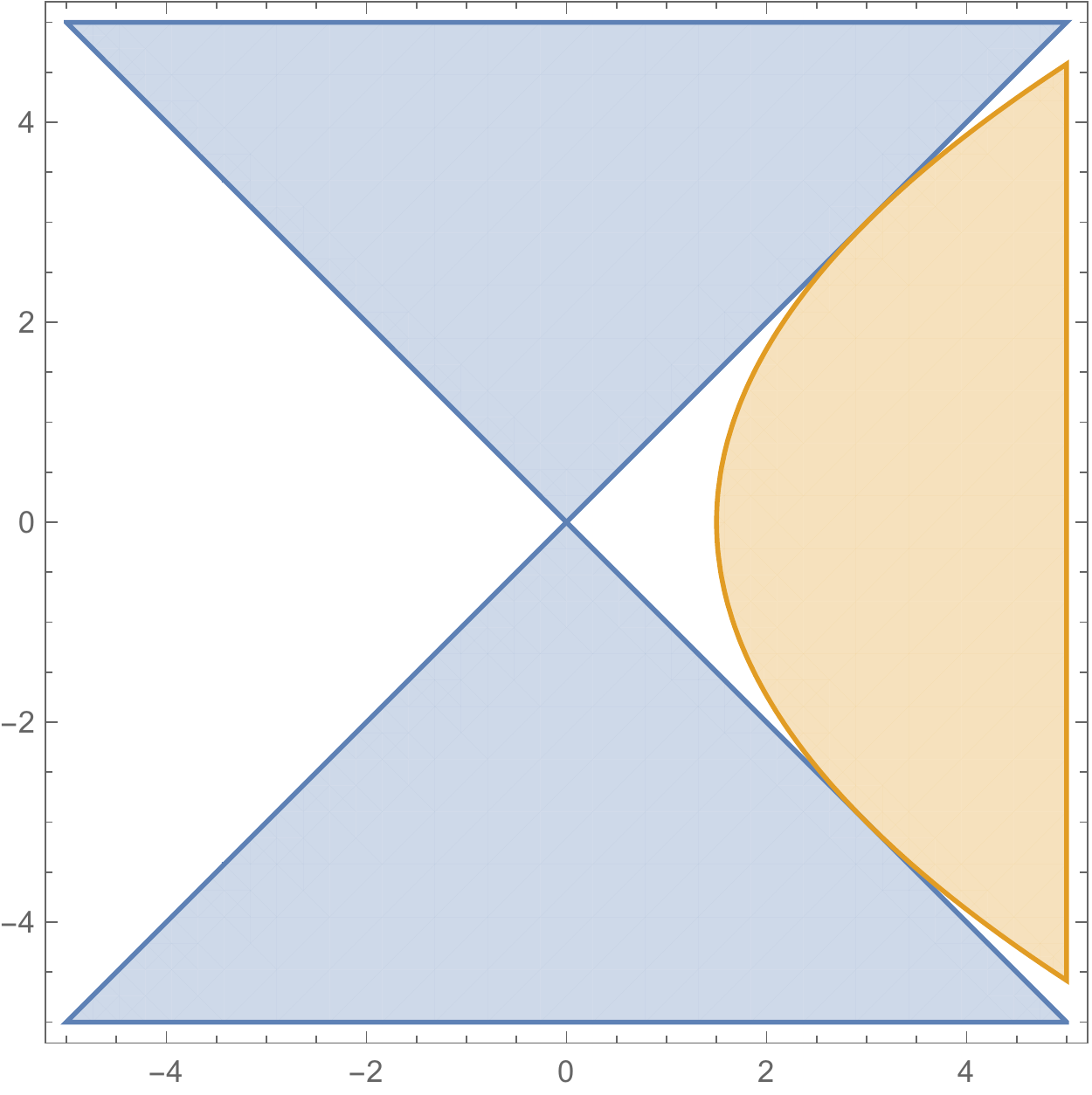}
  \caption{$\shomo$ in \Cref{ex:simple} (blue) and the $\shomo$-free set constructed using a concave underestimator of $\|x\|^2 - \|y\|^2$ (orange).}
  \label{fig:simpleexample}
\end{figure}
The problem seems to be that $\|x\|^2$ is a strictly convex function.
Indeed, suppose $S = \{ x \in \mathbb{R}^n \st f(x) \leq 0 \}$ where $f$ is
strictly convex.
The $S$-free set obtained via a concave underestimator at $\xlp$ is
$C = \{ x \in \mathbb{R}^n \st f(\xlp) + \nabla f(\xlp)(x - \xlp) \geq 0\}$.
It is not hard to see that the strict convexity of $f$ implies that $C$ is not
maximal $S$-free.
The reason is that the linearization of $f$ at $\xlp \notin S$ will not support
the region $S$.
On the other hand, if $f$ is instead sublinear, then any linearization of $f$
will support $S$.

The previous observation motivates the following.
The set $\shomo$ can be equivalently be described by $\shomo = \{ (x,y) \in
\mathbb{R}^{n+m} \st \|x\| - \|y\| \leq 0 \}$.
Now, the function $f(x,y) = \|x\| - \|y\|$ has the following concave
underestimator at $\xlp \neq 0$, $\frac{\xlp^\T x}{\|\xlp\|} - \|y\|$, which
yields the $\shomo$-free set
\begin{equation}\label{eq:clambdadef}
  \maxhomo = \{ (x,y) \in \mathbb{R}^{n+m} \st \lambda^\T x \geq \|y\| \},
\end{equation}
where $\lambda = \frac{\xlp}{\|\xlp\|}$. This set turns out to be maximal, even if we consider any other $\lambda \in D_1(0)$.

\subsection{Maximal $\shomo$-free sets}
We now prove that $\maxhomo$ is maximal $\shomo$-free.
The main idea is to exploit that every inequality describing $\maxhomo$ has
a point in $\shomo \cap \maxhomo$ exposing it and use \Cref{thm:exposed_maximality}. We begin with a Lemma whose technical proof we leave in the appendix. We recall that a function is sublinear if and only if it is convex and positive homogeneous.

\begin{restatable*}{lemma}{exposingIneq}
    \label{lemma:exposing_ineq}
    Let $\phi : \mathbb{R}^n \to \mathbb{R}$ be a sublinear function, $\lambda \in
    D_1(0)$, and let
    \[
      C = \{ (x,y) \st \phi(y) \leq \lambda^\T x \}.
    \]
    Let $(\xlp, \ylp) \in C$ be such that $\phi$ is differentiable at $\ylp$ and
    $\phi(\ylp) = \lambda^\T \xlp$.
    Then $(\xlp, \ylp)$ exposes the valid inequality $-\lambda^\T x + \nabla
    \phi(\ylp)^\T y \leq 0$.
    %
  \end{restatable*}

\begin{theorem} \label{thm:max_homo}
  Let $\shomo = \{(x,y) \in \mathbb{R}^{n+m} \st \|x\| \leq \|y\|\}$ and $\maxhomo
  = \{ (x,y) \in \mathbb{R}^{n+m} \st \lambda^\T x \geq \|y\| \}$ 
  for $\lambda \in D_1(0)$.
  Then, $\maxhomo$ is a maximal $\shomo$-free set. Furthermore, if $\lambda = \frac{\xlp}{\|\xlp\|}$, $\maxhomo$ contains $(\xlp,\ylp)$ in its interior.
\end{theorem}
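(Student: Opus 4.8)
The plan is to verify the two hypotheses of \Cref{thm:exposed_maximality}: that $\maxhomo$ is a convex $\shomo$-free set, and that $\maxhomo$ can be written as an intersection of valid inequalities each of which is exposed by some point of $\shomo \cap \maxhomo$. Maximality then follows immediately, and the ``furthermore'' clause is a one-line check.

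\emph{Convexity and $\shomo$-freeness.} The set $\maxhomo$ is the $0$-sublevel set of the convex function $(x,y) \mapsto \|y\| - \lambda^\T x$, hence closed and convex. For freeness, note $\inte(\maxhomo) = \{(x,y) \st \lambda^\T x > \|y\|\}$, and if $(x,y)$ lies in this set then Cauchy--Schwarz together with $\|\lambda\| = 1$ gives $\|x\| \ge \lambda^\T x > \|y\|$, so $(x,y) \notin \shomo$; thus $\inte(\maxhomo) \cap \shomo = \emptyset$.

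\emph{Maximality.} The key step is to produce a semi-infinite description of $\maxhomo$ indexed by the unit sphere. Since $\|y\| = \sup_{u \in D_1(0)} u^\T y$ for every $y$ (including $y=0$), we have
\[
  \maxhomo = \{ (x,y) \in \RR^{n+m} \st -\lambda^\T x + u^\T y \leq 0 \ \text{ for all } u \in D_1(0) \},
\]
so I take $\Gamma = \{ (-\lambda^\T x + u^\T y \le 0) : u \in D_1(0) \}$. For a fixed $u \in D_1(0)$ I claim the point $(\lambda, u)$ exposes the corresponding inequality: indeed $\phi \coloneqq \norm{\cdot}$ is sublinear and differentiable at $u \neq 0$ with $\nabla\phi(u) = u$, and $\phi(u) = 1 = \lambda^\T \lambda$, so \Cref{lemma:exposing_ineq} (applied with $\phi = \norm{\cdot}$, $\bar x = \lambda$, $\bar y = u$) states precisely that $(\lambda,u)$ exposes $-\lambda^\T x + u^\T y \le 0$. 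Moreover $(\lambda,u) \in \shomo$ since $\|\lambda\| = 1 = \|u\|$, and $(\lambda,u) \in \maxhomo$ since $\lambda^\T\lambda = 1 = \|u\|$; hence $(\lambda,u) \in \shomo \cap \maxhomo$. Every member of $\Gamma$ is therefore exposed by a point of $\shomo \cap \maxhomo$, and \Cref{thm:exposed_maximality} yields that $\maxhomo$ is maximal $\shomo$-free.

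\emph{The ``furthermore''.} If $\lambda = \xlp/\|\xlp\|$, then $\lambda^\T \xlp = \|\xlp\| > \|\ylp\|$, using the standing assumption $\|\xlp\|^2 > \|\ylp\|^2$; hence $(\xlp,\ylp)$ satisfies the strict inequality defining $\inte(\maxhomo)$.

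The substantive technical content is isolated in \Cref{lemma:exposing_ineq}, whose proof is deferred to the appendix; granting it, the argument is short. The points requiring care are that the semi-infinite representation over $D_1(0)$ is exact — which reduces to the identity $\|y\| = \sup_{\|u\|=1} u^\T y$, in particular at $y = 0$ — and that the candidate exposing points $(\lambda, u)$ genuinely lie in $\shomo \cap \maxhomo$ rather than only on the boundary of one of the two sets (here they sit on the boundary of both, which is exactly what is needed).
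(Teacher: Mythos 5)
Your proposal is correct and follows essentially the same route as the paper's own proof: the semi-infinite description of $\maxhomo$ over $D_1(0)$, the exposing points $(\lambda,\beta)\in\shomo\cap\maxhomo$ supplied by \Cref{lemma:exposing_ineq}, and the conclusion via \Cref{thm:exposed_maximality}. The only difference is that you spell out the freeness (Cauchy--Schwarz) and the ``furthermore'' check, which the paper leaves as direct verifications.
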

\begin{proof}
  The $\shomo$-freeness follows by construction. To show that $\maxhomo$ is maximal, we first notice that
  \[
    \maxhomo = \{ (x,y) \in \mathbb{R}^{n+m} \st -\lambda^\T x + \beta^\T y \leq 0,
    \ \forall \beta \in D_{1}(0) \}.
  \]
  We just need to show that every inequality $(-\lambda, \beta)$ is exposed by a point
  $(x,y) \in \shomo \cap \maxhomo$.

  Since the norm function $\|\cdot \|$ is sublinear, differentiable everywhere but in the origin, and 
  \(\|\beta\| = 1 = \lambda^\T\lambda \),
  \Cref{lemma:exposing_ineq} shows that $(\lambda, \beta) \in \shomo \cap \maxhomo$
  exposes $(-\lambda, \beta)$.
  From \Cref{thm:exposed_maximality} we conclude that $\maxhomo$ is maximal
  $\shomo$-free.
  
  The fact that $(\xlp,\ylp)\in \inte(\maxhomo)$ when $\lambda = \frac{\xlp}{\|\xlp\|}$, can be verified directly.
\end{proof}

\section{Homogeneous quadratics with a single homogeneous linear constraint} \label{sec:homowithhomo}
Finding maximal $S$-free sets for $S$ defined using a
non-homogeneous quadratic function is much more challenging than the previous case.
In general, using a homogenization and diagonalization, any such $S$ can be described as
\begin{equation} \label{eq:nonhomogeneric}
  \{ (x,y,z) \in \mathbb{R}^{n+m+l} \st \|x\| \leq \|y\|,\, a^\T x + d^\T
  y + h^\T z = -1 \}.
\end{equation}
\begin{remark}
  Similarly to our discussion in \Cref{remark:transformationhomo}, the choice of transformation to bring a non-homogenous quadratic to the form \eqref{eq:nonhomogeneric} is not unique. Different choices can produce different vectors $a,d,h$. Nonetheless, maximality of $S$-free sets is preserved through these transformations if they are one-to-one. We discuss the effect of the different choices of such transformations in \Cref{sec:transformations}.
\end{remark}
First of all, we note that the case $h \neq 0$ can be tackled directly using \Cref{sec:maximalhomo}. Indeed, if this is the case it is not hard to see that $C\times \mathbb{R}^l$ is maximal $S$-free (with respect to the corresponding hyperplane), where $C$ is any maximal $\shomo$-free. This follows from \Cref{thm:projection}.
Thus, in what follows we consider
\begin{align*}
  \snonhomo = \{ (x,y) \in \mathbb{R}^{n+m} \st \|x\| \leq \|y\|, a^\T x + d^\T y = -1 \}.
\end{align*}
Also note that using transformations that yield the latter form of $\snonhomo$ allow us to assume that the given point $(\bar x, \bar y)\not\in \snonhomo$ satisfies 
\[ \|\bar x\| > \| \bar y\|,\, a^\T \bar x + d^\T \bar y = -1. \]
We elaborate on this point in \Cref{sec:transformations}.

The set $\snonhomo$ above is our final goal. However, at this point, a simpler set to study is
\begin{align*}
  \shomob
  &= \{ (x,y) \in \mathbb{R}^{n+m} \st \|x\| \leq \|y\|, a^\T x + d^\T y \leq 0 \}.
\end{align*}
In this section we construct maximal $\shomob$-free sets that contain $(\bar x, \bar y)$ satisfying 
\[ \|\bar x\| > \| \bar y\|,\, a^\T \bar x + d^\T \bar y \leq 0. \]
While this set is interesting on its own, it provides an important intermediate step into our construction of maximal $S$-free sets.

As it turns out, the construction of maximal $\shomob$-free sets
depends on whether $\|a\| < \|d\|$ or $\|a\| \geq \|d\|$ and on the value of $m$. Unfortunately, each case requires different ideas. The following remark dismisses a simple case:

\begin{remark}
If $m = 1$ and $\|a\| < \|d\|$ then $\shomob$ is convex.
To see this, assume that $d > 0$ and let $(x,y) \in \shomob$ with $y \neq 0$.
Then, $dy \leq -a^\T x \leq \|a\| \|x\| \leq \|a\| |y| < d |y|$.
This can only happen if $y < 0$.
Therefore, $\shomob$ is the second order cone $\{ (x,y) \st \|x\| \leq -y \}$.
The case $d < 0$ is analogous.
We remark that the assumption $\|a\| < |d|$ is fundamental for the argument.
As we show in \Cref{ex:counterex_easycase}, $\shomob$ is not necessarily
convex if $\|a\| = |d|$.
\end{remark}
We divide the remaining cases in the following:
\begin{description}
  \item[Case 1] $\|a\| \leq \|d\|\, \wedge \, m > 1$.
  \item[Case 2] $\|a\| \geq \|d\|$.
\end{description}
Note that both our strategies allow us to handle the overlapping case $\|a\|=\|d\| \, \wedge\, m>1$. 
We start with the more natural idea that follows from our previous discussions.
This yields the proof of Case 1 and motivates our case distinction.

\subsection{Case 1: $\|a\| \leq \|d\|\, \wedge \, m > 1$}

The strategy for proving maximality of $\maxhomo$ was to write $\maxhomo$ as
\[
  \maxhomo = \{ (x,y) \in \mathbb{R}^{n+m} \st -\lambda^\T x + \beta^\T y \leq 0,
  \ \forall \beta \in D_{1}(0) \},
\]
and to find an exposing point in $\shomo \cap \maxhomo$ for each of the
inequalities defining $\maxhomo$.
As $\shomob \subseteq \shomo$, $\maxhomo$ is clearly $\shomob$-free.
However, if we try to prove it is maximal following the same technique,
we find that it is not clear that some inequalities have exposing
points in $\shomob \cap \maxhomo$.
The exposing point of the inequality $(-\lambda, \beta)$, $(\lambda, \beta)$ is
in $\shomob$ if and only if $a^\T \lambda + d^\T \beta \leq 0$.
Let 
\[G(\lambda) = \{ \beta \st \|\beta\| = 1,\ a^\T \lambda
+ d^\T \beta \leq 0\}. \]
It is natural to ask, then, if
\[
  \maxhomobg = \{ (x,y) \in \mathbb{R}^{n+m} \st -\lambda^\T x + \beta^\T y \leq 0,
  \ \forall \beta \in G(\lambda) \}
\]
is maximal $\shomob$-free. Intuitively, $C_{G(\lambda)}$ is obtained from $C_\lambda$ by removing from its description all inequalities that do not have an exposing point in $a^\T \lambda + d^\T \beta \leq 0$.
It is reasonable to expect maximality, as, by construction, every inequality has
a point exposing it.
Indeed,
\begin{proposition} \label{prop:largest_maybe_free}
  If $\maxhomobg \neq \emptyset$ and $C$ is any $\shomob$-free set such that
  $\maxhomo \subseteq C$, then $C \subseteq \maxhomobg$.
\end{proposition}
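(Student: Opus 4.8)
The plan is to show directly that if some $\shomob$-free set $C$ satisfies $\maxhomo \subseteq C$ but $C \not\subseteq \maxhomobg$, then $C$ must contain a point of $\shomob$ in its interior, contradicting $\shomob$-freeness. Concretely, suppose there is $(\xlp,\ylp) \in C \setminus \maxhomobg$. Since $(\xlp,\ylp) \notin \maxhomobg$, one of the defining inequalities of $\maxhomobg$ is violated: there exists $\beta \in G(\lambda)$ with $-\lambda^\T \xlp + \beta^\T \ylp > 0$. This inequality $(-\lambda,\beta)$ is valid for $\maxhomo$ (it is one of the inequalities in the description of $\maxhomo$ from \Cref{thm:max_homo}), hence valid for $\maxhomo$ but not valid for $C' \coloneqq \conv(\maxhomo \cup \{(\xlp,\ylp)\}) \subseteq C$.

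The key step is to identify an exposing point of $(-\lambda,\beta)$ with respect to $\maxhomo$ that also lies in $\shomob$. By the argument already given in the proof of \Cref{thm:max_homo} (via \Cref{lemma:exposing_ineq} applied to $\phi = \|\cdot\|$, which is sublinear and differentiable away from the origin, with $\|\beta\| = 1 = \lambda^\T\lambda$), the point $(\lambda,\beta)$ exposes $(-\lambda,\beta)$ with respect to $\maxhomo$, and clearly $(\lambda,\beta) \in \shomo \cap \maxhomo$. The extra ingredient here is that $\beta \in G(\lambda)$ means precisely $a^\T\lambda + d^\T\beta \leq 0$, so in fact $(\lambda,\beta) \in \shomob \cap \maxhomo$. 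Now apply \Cref{thm:exposed_enters} with $K = \maxhomo$, $K' = C'$: since $(-\lambda,\beta)$ is valid for $K$, not valid for $K'$, and exposed by $(\lambda,\beta) \in K$, we get $(\lambda,\beta) \in \inte(C') \subseteq \inte(C)$. But $(\lambda,\beta) \in \shomob$, contradicting the assumption that $C$ is $\shomob$-free. Therefore no such $(\xlp,\ylp)$ exists, i.e.\ $C \subseteq \maxhomobg$.

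The only subtlety worth double-checking is that \Cref{thm:exposed_enters} requires $(\lambda,\beta)$ to be in $K = \maxhomo$ (not merely in its closure or boundary abstractly) and that it genuinely exposes the inequality there; both are supplied verbatim by the reasoning in the proof of \Cref{thm:max_homo}, so there is no real obstacle — the proposition is essentially a localization of that theorem's argument to the sub-description indexed by $G(\lambda)$. The hypothesis $\maxhomobg \neq \emptyset$ is needed only to ensure $G(\lambda) \neq \emptyset$ so that the description of $\maxhomobg$ is meaningful (and, implicitly, so that $\maxhomo \subseteq \maxhomobg$, making the chain $\maxhomo \subseteq C$ compatible with the conclusion); it plays no further role in the separation argument above.
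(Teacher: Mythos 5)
Your proof is correct and follows essentially the same route as the paper's: pick a violated inequality $(-\lambda,\beta)$ with $\beta \in G(\lambda)$, observe that its exposing point $(\lambda,\beta)$ from \Cref{thm:max_homo} lies in $\shomob$ precisely because $\beta \in G(\lambda)$, and invoke \Cref{thm:exposed_enters} to place that point in $\inte(C)$, contradicting $\shomob$-freeness. The only cosmetic difference is your detour through $C' = \conv(\maxhomo \cup \{(\xlp,\ylp)\})$, which is unnecessary since $C$ is convex by the definition of an $\shomob$-free set and the theorem can be applied with $K' = C$ directly, as the paper does.
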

\begin{proof}
  Suppose, by contradiction, that $C \not\subseteq \maxhomobg$.
  This implies that there must exist $ \beta_0 \in G(\lambda)$ such that
  $-\lambda^\T x + \beta_0^\T y \leq 0$ is not valid for $C$.
  As $\maxhomo \subseteq \maxhomobg$, $-\lambda^\T x + \beta_0^\T y \leq 0$ is
  valid for $\maxhomo$.
  
  As we saw in \Cref{thm:max_homo}, $(\lambda, \beta_0) \in \maxhomo$ exposes
  $-\lambda^\T x + \beta_0^\T y \leq 0$, and since $\maxhomo \subseteq
  C$, \Cref{thm:exposed_enters} implies that $(\lambda, \beta_0) \in
  \inte(C)$.
  However, since $\beta_0 \in G(\lambda)$, we have $(\lambda, \beta_0) \in
  \shomob$. This contradicts the $\shomob$-freeness of $C$.
\end{proof}
This result shows that $\maxhomobg$ is the largest (inclusion-wise) set that one
can aspire to obtain from $\maxhomo$. However, it is unclear if $\maxhomobg$ is
$\shomob$-free.
Even more, it is unclear whether $G(\lambda)$ is non-empty or not.
In the following we study when $\maxhomobg$ is $\shomob$-free

We start by showing that when $\lambda = \frac{\xlp}{\|\xlp\|}$, $G(\lambda)$ is non-empty.
\begin{proposition} \label{prop:d_equal_0}
  Let $(\xlp, \ylp) \notin \shomob$ such that $a^\T \xlp + d^\T \ylp \leq 0$ and
  let $\lambda = \frac{\xlp}{\|\xlp\|}$.
  Then,
  \[
    G(\lambda) \neq \emptyset.
  \]
  If, in addition, $d = 0$, then $G(\lambda) = D_{1}(0)$
  and  $\maxhomobg = \maxhomo$ is maximal $\shomob$-free.
\end{proposition}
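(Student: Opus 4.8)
The plan is to split the statement into its two assertions and handle them in order. For the first assertion, $G(\lambda)\neq\emptyset$, I would exhibit an explicit $\beta\in D_1(0)$ with $a^\T\lambda + d^\T\beta \le 0$. The natural candidate is $\beta = -\tfrac{d}{\|d\|}$ when $d\neq 0$ (and any unit vector when $d=0$, which is the second part anyway). With this choice $a^\T\lambda + d^\T\beta = a^\T\lambda - \|d\|$, so it suffices to show $a^\T\lambda \le \|d\|$. Here is where the hypotheses must be used: we know $(\xlp,\ylp)\notin\shomob$ means $\|\xlp\| > \|\ylp\|$, that $a^\T\xlp + d^\T\ylp\le 0$, and $\lambda = \xlp/\|\xlp\|$. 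From $a^\T\xlp \le -d^\T\ylp \le \|d\|\,\|\ylp\| < \|d\|\,\|\xlp\|$ (using Cauchy--Schwarz and $\|\ylp\| < \|\xlp\|$; if $\ylp = 0$ the middle inequality is $a^\T \xlp \le 0 \le \|d\|\|\xlp\|$ when $d\ne 0$), dividing by $\|\xlp\| > 0$ gives $a^\T\lambda < \|d\|$, hence $a^\T\lambda + d^\T\beta < 0 \le 0$, so $\beta\in G(\lambda)$. (When $d\neq 0$ but $\|\ylp\|=0$, the strict inequality still holds since $\|d\|\|\xlp\|>0$; the degenerate subcase $d=0$ is subsumed by the next paragraph.) I should double-check the edge case $\|d\| = 0$ does not sneak into this argument — but that is precisely the ``if, in addition, $d=0$'' branch, treated separately.

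For the second assertion, assume $d = 0$. Then $G(\lambda) = \{\beta : \|\beta\|=1,\ a^\T\lambda \le 0\}$, and I must first check $a^\T\lambda\le 0$ so that the constraint $a^\T\lambda + d^\T\beta \le 0$ is satisfied for \emph{all} unit $\beta$; this follows from $a^\T\xlp = a^\T\xlp + d^\T\ylp \le 0$ (since $d=0$) and dividing by $\|\xlp\|>0$. Hence $G(\lambda) = D_1(0)$, and therefore
\[
  \maxhomobg = \{(x,y) : -\lambda^\T x + \beta^\T y \le 0,\ \forall\beta\in D_1(0)\} = \maxhomo,
\]
by the semi-infinite description of $\maxhomo$ recalled just before \Cref{prop:largest_maybe_free} (the one used in the proof of \Cref{thm:max_homo}). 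Since $\maxhomo = \maxhomobg$ and $d=0$, $\shomob = \{(x,y): \|x\|\le\|y\|,\ a^\T x\le 0\}\subseteq\shomo$, so $\maxhomo$ is $\shomob$-free by \Cref{thm:max_homo}.

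It remains to upgrade ``$\shomob$-free'' to ``maximal $\shomob$-free''. Here I would apply \Cref{thm:exposed_maximality} directly: for each defining inequality $(-\lambda,\beta)$ of $\maxhomo$ I need an exposing point in $\shomob\cap\maxhomo$. By \Cref{lemma:exposing_ineq} (applied with $\phi = \|\cdot\|$, as in the proof of \Cref{thm:max_homo}) the point $(\lambda,\beta)$ exposes $(-\lambda,\beta)$ and lies in $\shomo\cap\maxhomo$; since $d=0$ and $a^\T\lambda\le 0$, it also satisfies the constraint $a^\T x\le 0$, hence $(\lambda,\beta)\in\shomob\cap\maxhomo$. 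Thus every defining inequality is exposed by a point of $\shomob\cap\maxhomo$ and \Cref{thm:exposed_maximality} gives maximality. The main obstacle I anticipate is purely bookkeeping around degenerate cases (notably $\ylp = 0$ and the interplay of $d=0$ with the strict-versus-non-strict inequalities), rather than any conceptual difficulty; the substantive ideas are already packaged in \Cref{thm:max_homo} and \Cref{lemma:exposing_ineq}.
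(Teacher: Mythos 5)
Your proposal is correct, but it diverges from the paper's proof in two places worth noting. For the nonemptiness of $G(\lambda)$, the paper constructs the witness $\beta = \tfrac{\ylp}{\|\xlp\|} + z$, where $z \ne 0$ is chosen orthogonal to $d$ so as to normalize $\tfrac{\ylp}{\|\xlp\|}$ to unit length --- a construction that explicitly invokes the standing assumption $m > 1$ of this section. You instead take $\beta = -d/\|d\|$ (for $d \ne 0$) and reduce the membership condition to $a^\T\lambda \le \|d\|$, which you correctly extract from $a^\T\xlp \le -d^\T\ylp \le \|d\|\,\|\ylp\| \le \|d\|\,\|\xlp\|$; this is more elementary, needs no dimension hypothesis on $m$, and your edge-case bookkeeping ($\ylp = 0$, $d = 0$ deferred to the second branch) is sound. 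For the maximality claim when $d = 0$, the paper simply observes that $G(\lambda) \ne \emptyset$ forces $G(\lambda) = D_1(0)$, hence $\maxhomobg = \maxhomo$, and then cites \Cref{prop:largest_maybe_free} (any $\shomob$-free superset of $\maxhomo$ is contained in $\maxhomobg = \maxhomo$, so $\maxhomo$ is maximal). You instead unwind that proposition and apply \Cref{thm:exposed_maximality} directly, checking that each $(\lambda,\beta)$ lies in $\shomob \cap \maxhomo$ and exposes $(-\lambda,\beta)$ via \Cref{lemma:exposing_ineq}; this is logically equivalent and equally valid, just slightly longer than the one-line citation available in the paper. No gaps.
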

\begin{proof}
  As $(\xlp, \ylp) \notin \shomob$, we have that $\|\ylp\| < \|\xlp\|$.
  Since $m > 1$, then we can find $z \in \mathbb{R}^m \setminus \{0\}$ such that
  $d^\T z = 0$ and $\|\frac{\ylp}{\|\xlp\|} + z\| = 1$.
  Also, $a^\T \xlp + d^\T \ylp \leq 0$ and $d^\T z = 0$ imply that $a^\T \lambda
  + d^\T(\frac{\ylp}{\|\xlp\|} + z) \leq 0$.
  Thus, $\frac{\ylp}{\|\xlp\|} + z \in G(\lambda)$.

  Regarding the second statement of the proposition, if $d = 0$ then clearly either $G(\lambda) = D_1(0)$ or $G(\lambda) = \emptyset$. Since we are in the case $G(\lambda) \neq \emptyset$, this immediately implies $\maxhomobg = \maxhomo$. Thus, \Cref{prop:largest_maybe_free}
  implies its maximality.
\end{proof}

In light of \Cref{prop:largest_maybe_free}, we just need for $\maxhomobg$ to be
$\shomob$-free for it to be maximal.
Note that
\begin{equation}\label{eq:cglambdadef}
  \maxhomobg = \{ (x,y) \in \mathbb{R}^{n+m} \st  \max_{\beta \in G(\lambda)} y^\T
  \beta \leq \lambda^\T x \},
\end{equation}
and so to prove $\shomob$-freeness, it is enough to show that for every $(x,
y) \in \shomob$, $\max_{\beta \in G(\lambda)} y^\T \beta \geq \lambda^\T x$.
In trying to prove this inequality is where the conditions of this case
naturally arise.
\begin{proposition} \label{prop:max_homo_good}
  Let $(\xlp, \ylp) \notin \shomob$ such that $a^\T \xlp + d^\T \ylp \leq 0$ and
  $\lambda = \frac{\xlp}{\|\xlp\|}$.
  If $\|d\| \geq \|a\|$ and $m > 1$, then $\maxhomobg$ is maximal
  $\shomob$-free and contains $(\xlp, \ylp)$ in its interior.
\end{proposition}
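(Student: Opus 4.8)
The plan is to invoke \Cref{prop:largest_maybe_free} and \Cref{prop:d_equal_0}: the latter guarantees $G(\lambda) \neq \emptyset$, and the former guarantees that once we show $\maxhomobg$ is $\shomob$-free, it is maximal. The containment $(\xlp,\ylp) \in \inte(\maxhomobg)$ should then follow because $(\xlp,\ylp) \in \inte(\maxhomo)$ by \Cref{thm:max_homo} and $\maxhomobg \supseteq \maxhomo$. So the whole weight of the proof is on the $\shomob$-freeness claim, which by the representation \eqref{eq:cglambdadef} amounts to showing: for every $(x,y)$ with $\|x\| \leq \|y\|$ and $a^\T x + d^\T y \leq 0$, one has $\max_{\beta \in G(\lambda)} y^\T \beta \geq \lambda^\T x$.

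First I would reduce to the worst case $\lambda^\T x = \|x\|$ (replacing $x$ by its component along $\lambda$ only makes the left side unchanged and the constraint $a^\T x + d^\T y \leq 0$ no harder to satisfy in the relevant direction — this needs a small argument, projecting $x$ onto $\langle \lambda\rangle$ and checking the inequality $a^\T x + d^\Ty \le 0$ can be maintained or that it suffices to treat $x = \|y\|\lambda$). Then the goal becomes: find $\beta$ with $\|\beta\| = 1$, $a^\T\lambda + d^\T\beta \leq 0$, and $y^\T\beta \geq \|y\|$, i.e. $\beta = y/\|y\|$ would be the naive choice but it may violate the linear constraint. The key geometric idea is that because $m > 1$, the unit sphere $D_1(0)$ in $\mathbb{R}^m$ has enough room: we want to perturb $y/\|y\|$ within the sphere toward the halfspace $\{d^\T\beta \leq -a^\T\lambda\}$ while keeping $y^\T\beta$ large. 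I expect the argument to parametrize a great-circle arc from $y/\|y\|$ and use $\|d\| \geq \|a\|$ together with $\|x\| \leq \|y\|$ and $a^\T x + d^\T y \leq 0$ to show the arc reaches $G(\lambda)$ before $y^\T\beta$ drops below $\|x\|$; the Cauchy--Schwarz bounds $|a^\T\lambda| \leq \|a\|$ and the fact that $\beta$ can be chosen with $d^\T\beta$ as negative as $-\|d\|$ are what make $\|d\| \geq \|a\|$ exactly the threshold.

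The main obstacle is this last quantitative step: exhibiting the right $\beta \in G(\lambda)$ with $y^\T\beta \geq \lambda^\T x$. Concretely I would set up $\beta$ in the two-dimensional (or three-dimensional) subspace spanned by $y$, $d$, and possibly a third vector orthogonal to both (available since $m > 1$), write $\beta = \cos\theta\,(y/\|y\|) + \sin\theta\, u$ for a suitable unit $u$ in that subspace, and optimize: choosing $u$ to align as much as possible against $d$ while the component against $y$ is controlled. The inequality $a^\T x + d^\T y \leq 0$ feeds in as $d^\T(y/\|y\|) \leq -a^\Tx/\|y\| \leq \|a\|\|x\|/\|y\| \le \|a\|$, which combined with $\|d\|\geq\|a\|$ lets the perturbation push $d^\T\beta$ down to $-a^\T\lambda$ while losing at most a controlled amount in $y^\T\beta$; verifying the bookkeeping here — that the trigonometric/Cauchy--Schwarz estimates close with the right sign — is where the case hypotheses $\|a\| \leq \|d\|$ and $m > 1$ are genuinely used and is the crux of the proof. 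Once $\shomob$-freeness is established, maximality and the interior-point claim are immediate from the cited propositions.
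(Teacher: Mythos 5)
Your skeleton matches the paper's: the maximality and interior-point claims do follow immediately from \Cref{prop:largest_maybe_free}, \Cref{prop:d_equal_0} and $\maxhomo \subseteq \maxhomobg$, and the entire content of the proposition is the freeness inequality $\max_{\beta \in G(\lambda)} y^\T\beta \geq \lambda^\T x$ for all $(x,y) \in \shomob$. But your treatment of that inequality has a genuine gap, in two places. First, the opening reduction is invalid: replacing $x$ by its component along $\lambda$ (or by $\|y\|\lambda$) destroys the constraint $a^\T x + d^\T y \leq 0$, because the component of $x$ orthogonal to $\lambda$ is exactly what can make $a^\T x$ negative enough for $(x,y)$ to be feasible. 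If you drop that coupling you are left having to prove $\max_{\beta\in G(\lambda)} y^\T\beta \geq \|y\|$, which holds only when $y/\|y\| \in G(\lambda)$ and is false in general --- and if it were always true, the hypothesis $\|a\| \leq \|d\|$ would play no role at all. The worst-case $x$ for fixed $y$ is the optimizer of $\max\{\lambda^\T x \st \|x\|\le\|y\|,\ a^\T x + d^\T y \le 0\}$, i.e.\ of $\phifun(y)$, and when the linear constraint is active it is \emph{not} parallel to $\lambda$; see \eqref{eq:phi_primal_solution}. Second, the quantitative step you defer --- exhibiting $\beta \in G(\lambda)$ with $y^\T\beta \geq \lambda^\T x$ by a great-circle perturbation --- is precisely the crux, and you have not closed it; as written it is a plan, not a proof.

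The paper closes this step by duality rather than by an explicit construction of $\beta$: since the objective is linear and $m>1$, the constraint $\|\beta\|=1$ can be relaxed to $\|\beta\|\le 1$ without changing the maximum; Slater's condition (available because $\|\ylp\| < \|\xlp\|$) gives strong duality, so $\max_{\beta\in G(\lambda)} y^\T\beta = \inf_{\theta\ge 0} \|y - d\theta\| - \lambda^\T a\,\theta$; and then for every fixed $\theta\ge 0$ and $(x,y)\in\shomob$ one checks directly that $\lambda^\T(x+a\theta) \le \|x+a\theta\| \le \|y-d\theta\|$, the last inequality following from $\theta a^\T x \le -\theta d^\T y$, $\|x\|^2\le\|y\|^2$ and $\|a\|\le\|d\|$. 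If you want to salvage your geometric route, the honest version is to compare the closed forms of the two optimal values, \eqref{eq:sublinearcglambda} against \eqref{eq:phi_value}, which amounts to the same computation in disguise.
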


\begin{proof}
  As discussed above, it is enough to show that 
  \begin{equation} \label{eq:prop:max_homo_good:to_show}
    \max_{\beta \in G(\lambda)} y^\T \beta \geq \lambda^\T x \text{ for every
    } (x, y) \in \shomob.
  \end{equation}
  Informally, the strategy is to find a dual of $\max_{\beta \in G(\lambda)}
  y^\T \beta$ so that the inequality we have to prove is of the form ``minimum
  of something greater or equal than $\lambda^\T x$'', which often times is
  easier to reason about.
  As the objective function of $\max_{\beta \in G(\lambda)} y^\T \beta$ is
  linear and $m > 1$, we can replace the $\|\beta\| =1$ constraint with an inequality and obtain
  \begin{equation} \label{eq:maximizationoverG}
    \max_{\beta \in G(\lambda)} y^\T \beta = \max \{ y^\T \beta \st \|\beta\|
    \leq 1, a^\T \lambda + d^\T \beta \leq 0 \}.
  \end{equation}
  As $G(\lambda)$ is constructed from an infeasible point $(\xlp, \ylp) \notin
  \shomob$ such that $a^\T \xlp + d^\T \ylp \leq 0$, i.e., $\|\ylp \| < \| \xlp
  \|$, we have $\left\| \ylp/\|\xlp\|  \right\| < 1$.
  Moreover, perturbing the latter we can argue that the rightmost optimization
  problem in \eqref{eq:maximizationoverG} has a strictly feasible point.
  Thus, Slater's condition holds and we have that
  \begin{equation} \label{eq:strongduality}
    \max \{ y^\T \beta \st \|\beta\| \leq 1, a^\T \lambda + d^\T \beta \leq 0 \}
    =
    \inf_{\theta \geq 0} \|y - d \theta\|  - \lambda^\T a \theta.
  \end{equation}
  Using \eqref{eq:strongduality}, \eqref{eq:prop:max_homo_good:to_show} is equivalent to
  \begin{equation} \label{eq:prop:max_homo_good:to_show_easier}
    \inf_{\theta \geq 0} \|y - d \theta\|  - \lambda^\T a \theta \geq \lambda^\T
    x \text{ for every } (x, y) \in \shomob.
  \end{equation}
  We now prove that if $(x,y) \in \shomob$, then $\lambda^\T (x + a \theta) \leq
  \|y - d \theta\|$, which implies the result.

  By Cauchy-Schwarz and $\|\lambda\| = 1$, we have that $\lambda^\T(x
  + a \theta) \leq \|x + a \theta\|$.
  Furthermore,
  $\|x + a \theta\|^2 = \|x\|^2 + 2 \theta a^T x + \|a \theta\|^2$.
  Since $\theta \geq 0$, $\theta a^\T x \leq -\theta d^\T y$.
  Together with $\|x\|^2 \leq \|y\|^2$ they imply
  \begin{align*}
    \|x + a \theta\|^2
    &\leq \|y\|^2 - 2 \theta d^\T y + \|a\|^2 \theta^2 \\
    &= \|y - d\theta\|^2 + (\|a\|^2 - \|d\|^2) \theta^2 \\
    &\leq \|y - d\theta\|^2,
  \end{align*}
  where the last inequality follows since $\|d\| \geq \|a\|$.

  We have shown that $\|x + a \theta\| \leq \|y - d\theta\|$.
  Hence, $\lambda^\T(x + a\theta) \leq \|y - d\theta\|$ as we wanted to show,
  which implies that $\maxhomobg$ is $\shomob$-free.
  Finally, \Cref{prop:largest_maybe_free} implies the maximality of
  $\maxhomobg$, and $(\xlp, \ylp)\in\inte(\maxhomobg)$ since $\maxhomo\subseteq \maxhomobg$.
\end{proof}

\begin{remark}
  Using \Cref{prop:phi} one can show that $\max_\beta \{ y^\T \beta \st \|\beta\| \leq 1,  a^\T
  \lambda + d^\T \beta \leq 0 \}$ is 
  \begin{equation}  \label{eq:sublinearcglambda}
    \begin{cases}
      \|y\|, &\text{ if } a^\T \lambda \|y\| + y^\T d \leq 0 \\
      \sqrt{(1 - (\frac{a^\T \lambda}{\|d\|})^2)(\|y\|^2 - (\frac{y^\T d}{\|d\|^2})^2)} - \frac{a^\T \lambda y^\T d}{\|d\|^2},
             &\text{ otherwise. }
    \end{cases}
  \end{equation}
  Note that this is well defined since if $\|d\| = 0$, then $\|a\| = 0$ and so $\eqref{eq:sublinearcglambda} = \|y\|$.
  This yields a closed-form expression for $\maxhomobg$ of the form
  \begin{equation}\label{eq:closedformcglambda}
    \maxhomobg = \{ (x,y) \in \mathbb{R}^{n+m} \st  \eqref{eq:sublinearcglambda} \leq \lambda^\T x \}.
  \end{equation}
\end{remark}

The last proposition provides certain guarantees of when a simple modification
of $\maxhomo$ yields maximal $\shomob$-free sets. Our proof heavily relies on
our assumptions $\|a\| \leq \|d\|$ (to show
\eqref{eq:prop:max_homo_good:to_show_easier}) and $m > 1$ (to show
\eqref{eq:maximizationoverG}), so the natural question is
whether these conditions are actually necessary for our statement to be true.
Thus, before moving on to the next case, we argue why these conditions are
indeed necessary in our statements. The following examples motivate our case
distinction and illustrate all cases we have covered.

\begin{example}\label{ex:simple3D-m2}
   Consider the following set of the type $\shomob$, which we denote $S_{\leq 0}^1$:
  \[S_{\leq 0}^1 = \{(x, y_1, y_2) \in \mathbb{R}^3 \st |x | \leq \|y\|, \quad a x + d^\T y \leq 0 \} \]

  with $a = 1 $ and $d=(1,-1)^\T$. Let us consider the point $(\bar{x},\bar{y})
  = (-1, 0 ,0)^\T$, clearly satisfying the linear inequality, but not in $S_{\leq 0}^1$.
  In \Cref{fig:simpleexample3dgood} we show $S_{\leq 0}^1$, the $S_{\leq 0}^1$-free set
  given by $\maxhomo$ and the set $\maxhomobg$ for $\lambda = \frac{\xlp}{\|\xlp\|}$. Since in this case $|a|
  = 1 \leq \sqrt{2} = \| d\|$ and $m>1$, we know $\maxhomobg$ is maximal $S_{\leq 0}^1$-free.
\end{example}

\begin{figure}
  \begin{subfigure}{.45\textwidth}
    \centering
    \includegraphics[scale=0.3]{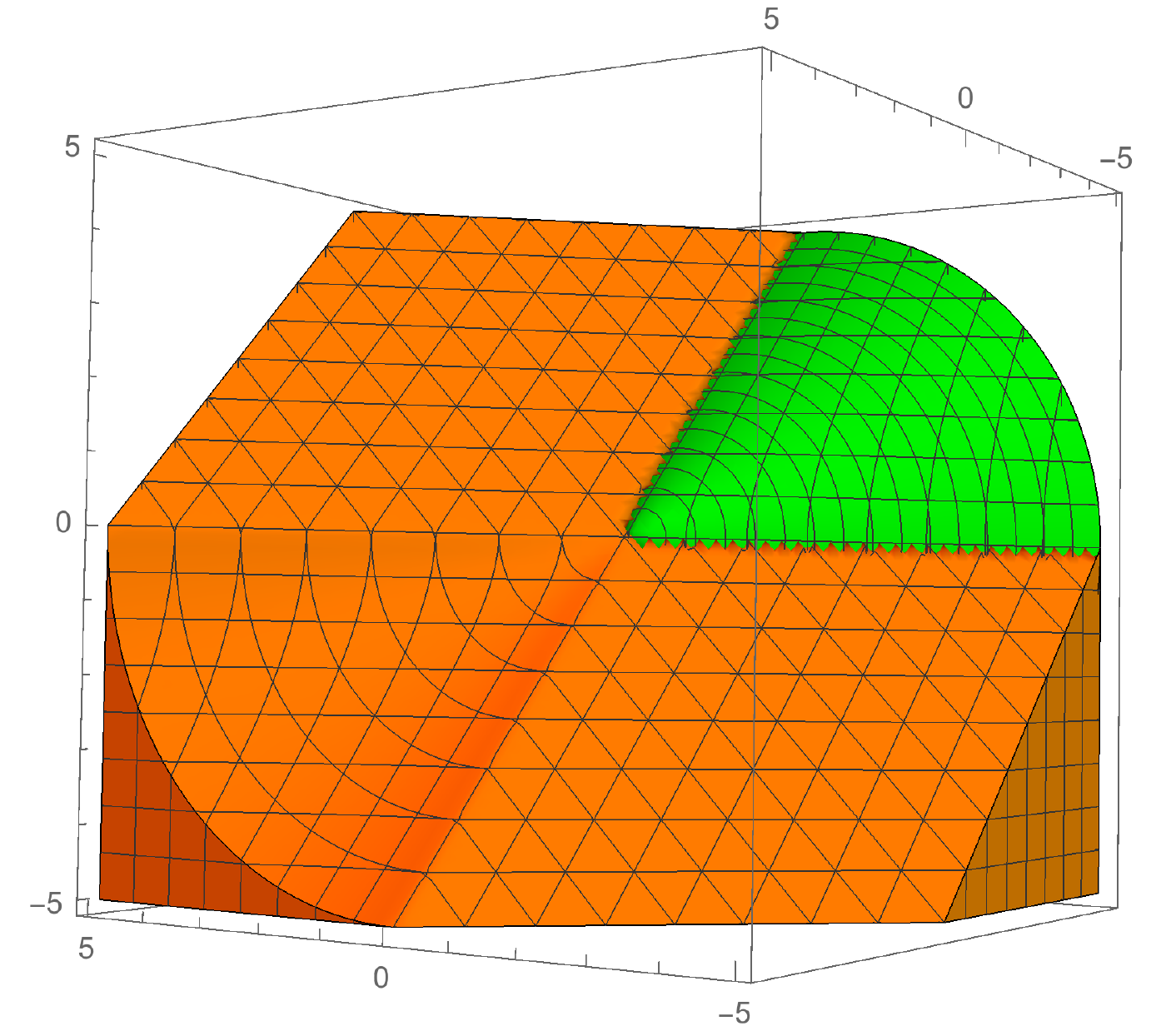}
    \includegraphics[scale=0.3]{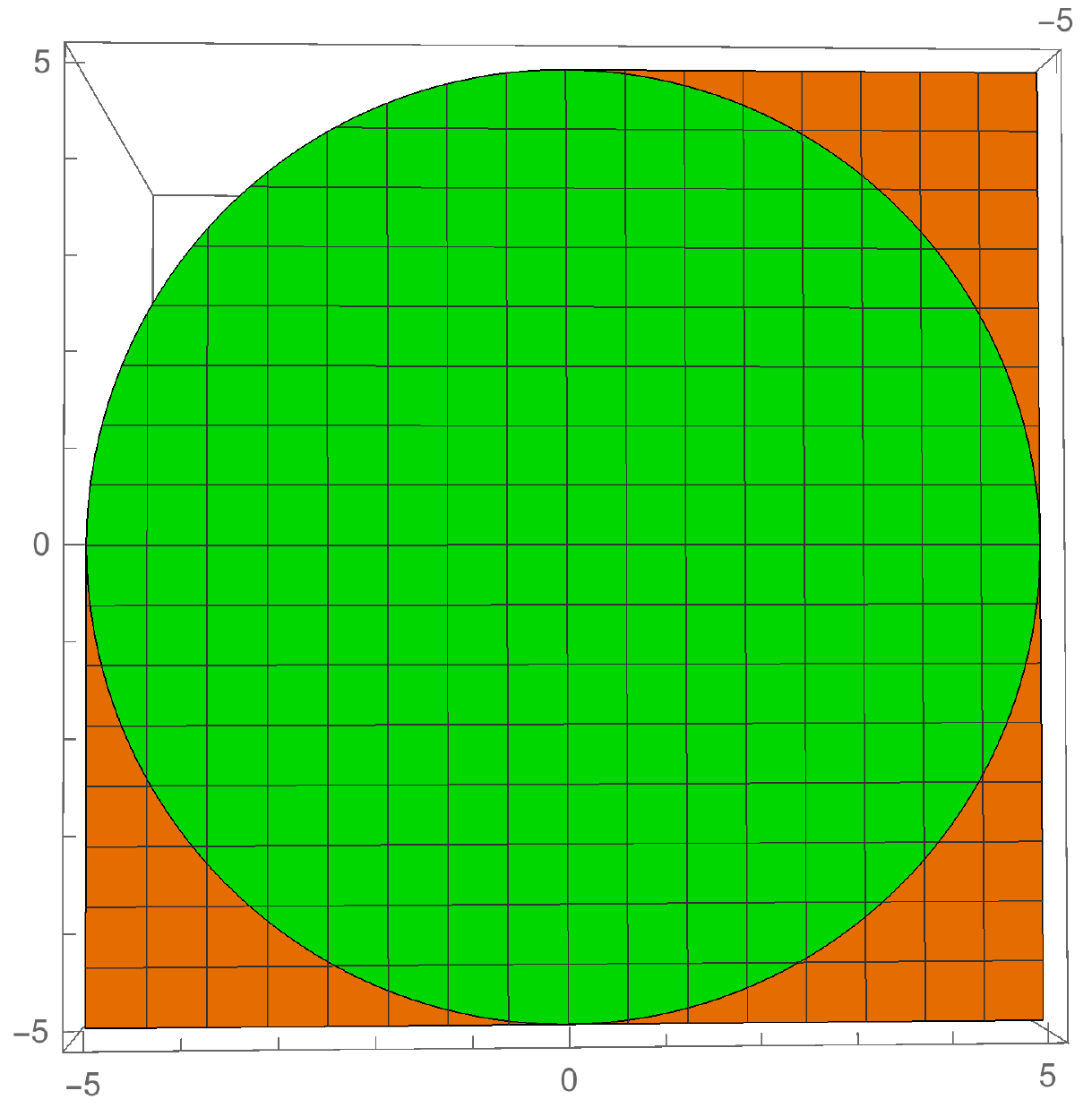}
    \caption{$S_{\leq 0}^1$ in \Cref{ex:simple3D-m2} (orange) and the corresponding $\maxhomo$ set (green). The latter is $S_{\leq 0}^1$-free but not maximal. }
    \label{fig:sub-simpleexample3dgood-clambda}
  \end{subfigure} \hspace{.1cm}
  \begin{subfigure}{.45\textwidth}
    \centering
    \includegraphics[scale=0.3]{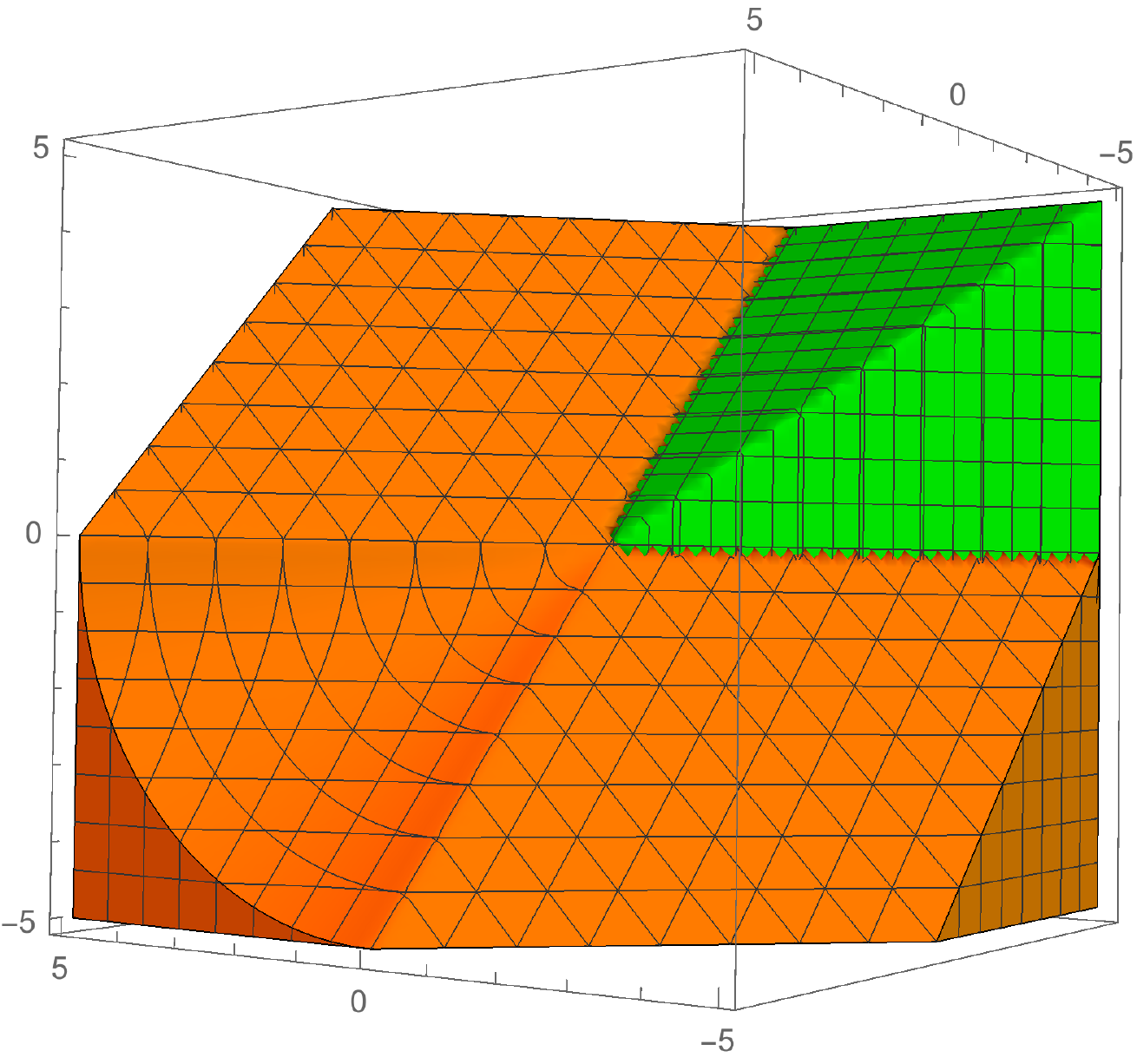}
    \includegraphics[scale=0.3]{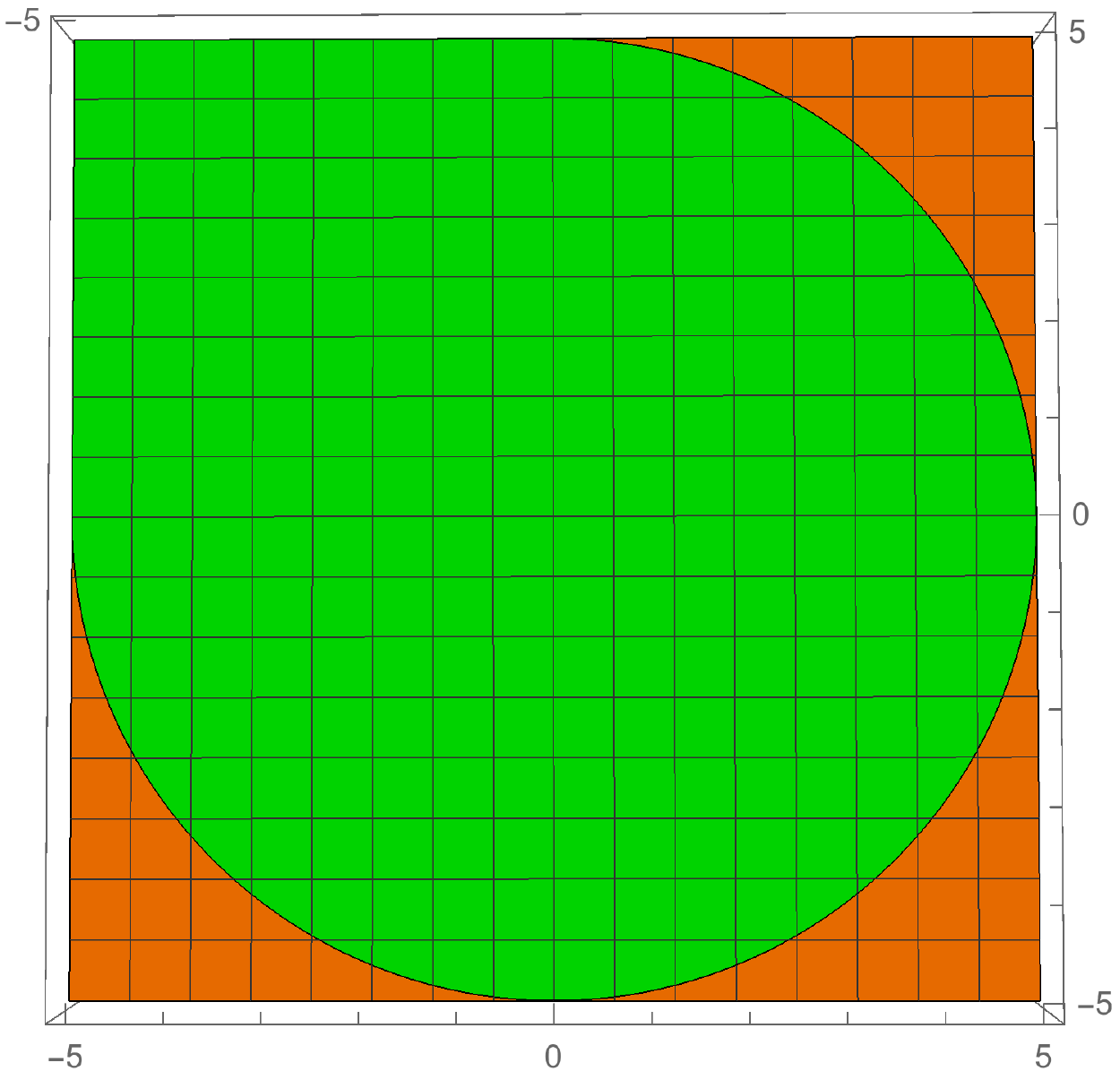}
    \caption{$S_{\leq 0}^1$ in \Cref{ex:simple3D-m2} (orange) and the corresponding $\maxhomobg$ set (green). The latter is maximal $S_{\leq 0}^1$-free.}
    \label{fig:sub-simpleexample3dgood-cglambda}
  \end{subfigure}
  \caption{Sets $\maxhomo$ and $\maxhomobg$ in \Cref{ex:simple3D-m2} for the case $\|a\| \leq \|d\|$.}
  \label{fig:simpleexample3dgood}
\end{figure}

  \begin{example}\label{ex:simple3D}
  Consider the set $\shomorunning$, defined as 
  \[\shomorunning = \{(x_1, x_2 , y) \in \mathbb{R}^3 \st \|x \| \leq |y |, \quad a^\T x + d y \leq 0 \} \]
  with $a
  = (-1/\sqrt{2},1/\sqrt{2})^\T $ and $d=1/\sqrt{2}$ (the $1/\sqrt{2}$ terms are not really important now as we can scale the inequality, but we reuse this example in subsequent sections where they do matter), and $(\bar{x},\bar{y})=(-1, -1 ,0)^\T$. This point satisfies the linear inequality in $\shomorunning$, but it is not in $\shomorunning$. Let $\lambda = \frac{\xlp}{\|\xlp\|}$. 
  
  In this case $a^\T \lambda  = 0$, and as a consequence the corresponding set $G(\lambda)$ is
  given by the singleton $\{-1\}$. In \Cref{fig:simpleexample3dbad} we
  show $\shomorunning$, the $\shomorunning$-free set given by $\maxhomo$ and the set
  $\maxhomobg$. In this case $\|a\| = 1 > 1/\sqrt{2} = | d |$, so we have no
  guarantee on the $\shomorunning$-freeness of $\maxhomobg$. Even more, it is not
  $\shomorunning$-free.
\end{example}

\begin{figure}
  \begin{subfigure}{.45\textwidth}
    \centering
    \includegraphics[scale=0.3]{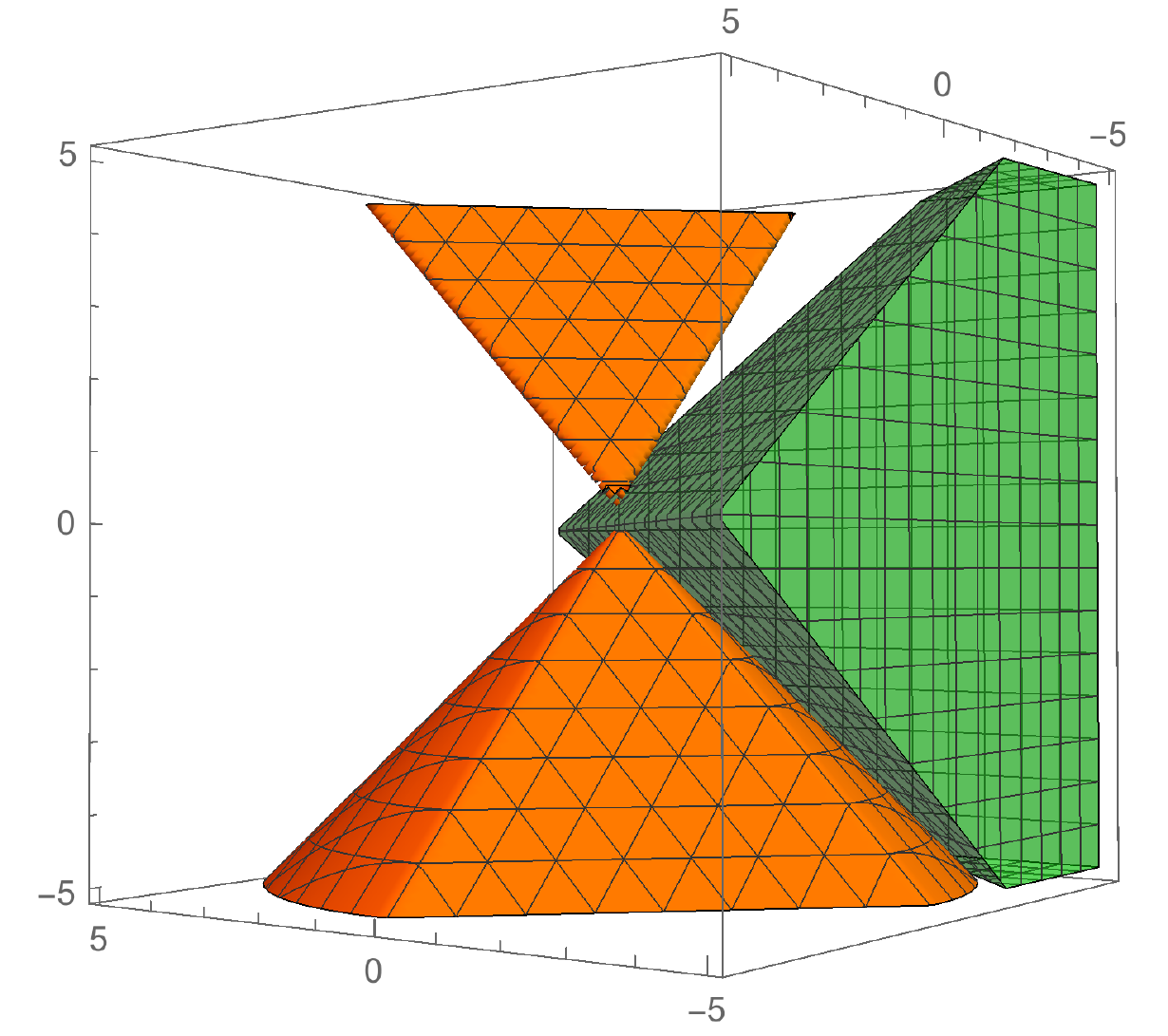}
    \caption{$\shomorunning$ in \Cref{ex:simple3D} (orange) and the corresponding $\maxhomo$ set (green). The latter is $\shomorunning$-free but not maximal.}
    \label{fig:sub-simpleexample3dbad-clambda}
  \end{subfigure}\hspace{.1cm}
  \begin{subfigure}{.45\textwidth}
    \centering
    \includegraphics[scale=0.3]{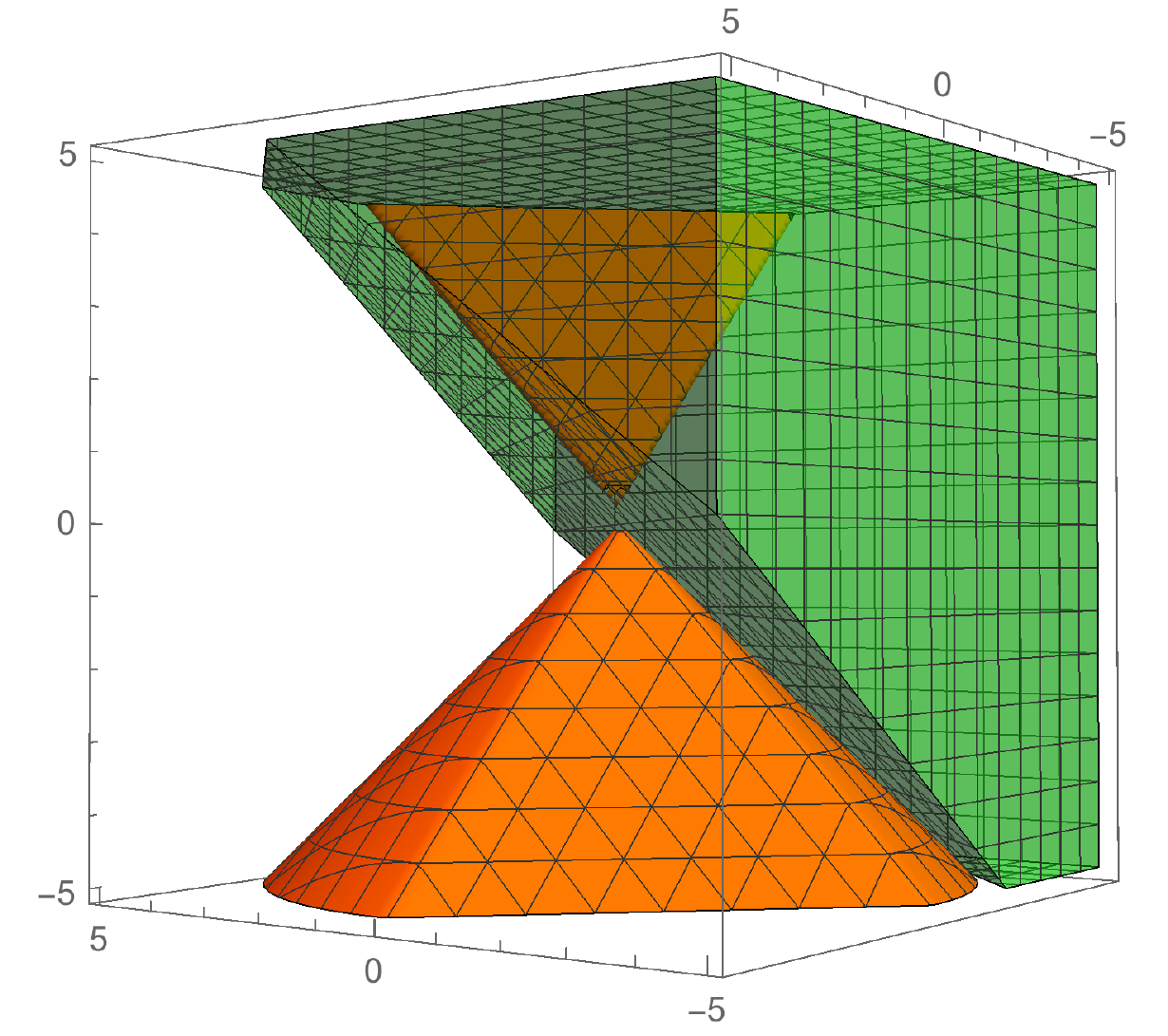}
    \caption{$\shomorunning$ in \Cref{ex:simple3D} (orange) and the corresponding
    $\maxhomobg$ set (green). The latter is not $\shomorunning$-free.}
    \label{fig:sub-simpleexample3dbad-cglambda}
  \end{subfigure}
  \caption{Sets $\maxhomo$ and $\maxhomobg$ in \Cref{ex:simple3D} for the case $\|a\| > \|d\|$.}
  \label{fig:simpleexample3dbad}
\end{figure}

\begin{example} \label{ex:counterex_easycase} Let us consider the following example with $n = 2$, $m=1$ and $\|d\| = \| a\| $. Let $a = (-3,4)^\T, d = 5$ and consider \( (\bar{x},\bar{y}) = (-4,-3,-1) \) and $\lambda = \frac{\xlp}{\|\xlp\|}$. Clearly $(\bar{x}, \bar{y}) \not\in \shomob$, but satisfies the linear constraint. In this case, $\beta \in G(\lambda)$ must satisfy
 \[ 5\cdot \beta \leq 0, |\beta| = 1 \]
thus $G(\lambda) = \{-1\}$. Nonetheless, $(x,y) = (3,-4, 5) \in \shomob$, and 
\[\lambda^\T x + y = 0  + 5 > 0\]
This means $(x,y)\in \inte(C_{G(\lambda)})$. Thus, $C_{G(\lambda)}$ is not $\shomob$-free.
\end{example}

\begin{remark} \label{rmk:less_cases_in_non_homo}
  The situation in Example~\ref{ex:counterex_easycase} is similar to the one
  depicted in Figure~\ref{fig:sub-simpleexample3dbad-cglambda}.
  Roughly speaking, when $\|a\| = \|d\|$ the upper region becomes a single line
  and this line intersects the interior of $\maxhomobg$.
  Intuitively, when we consider $\snonhomo$ where $a^\T x + d^\T
  y = -1$, this line should not appear.
  Even more, $\snonhomo$ should be convex.
  We will see that this is the case in the \Cref{subsec:easycasenonhomo}.
\end{remark}

\subsection{Case 2: $\|a\| \geq \|d\|$} \label{subsec:homowithhomohard}
As we have seen in \Cref{ex:simple3D}, when $\|a\| \leq \|d\|$ does not hold, $\maxhomobg$ is not necessarily $\shomob$-free.
On the other hand, $\maxhomo$ is $\shomob$-free but not necessarily maximal.
As before, we are looking for a convex set $C$ that is maximal $\shomob$-free
set that \emph{contains} $\maxhomo$. We point out that in not all statements of this section we require $\lambda = \frac{\xlp}{\|\xlp\|}$.

\subsubsection{Projecting-out the lineality space}
The lineality space of $\maxhomo$ is $L = \{ (x,y) \st \lambda^\T x = 0,
y = 0 \}$ and as $\maxhomo \subseteq C$, it must be that $L$ is
contained in the lineality space of $C$.
By \Cref{thm:projection}, $\proj_{L^\perp} C$ is maximal $\proj_{L^\perp}
\shomob$-free, thus, it might be possible (and we show it is) to find $C$ by studying maximal
$\proj_{L^\perp} \shomob$-free sets.
We note that $L^\perp = \langle \lambda \rangle \times \mathbb{R}^m$ and
\[
  \proj_{L^\perp} \shomob = \{ (\lambda^\T x, y) \st \|x\| \leq \|y\|,\ a^\T
  x + d^\T y \leq 0 \}.
\]
After analyzing low dimensional instances of $\proj_{L^\perp} \shomob$ we conjecture that 
$\stcomp{\proj_{L^\perp} \shomob }$ is formed by the union of two disjoint convex sets. If this is true, it would directly provide maximal
$\proj_{L^\perp} \shomob$-free sets. 

In order to show that this is actually true, we follow the following strategy.
For each point $y \in \mathbb{R}^m$, the points $(\lambda^Tx, y) \in
\proj_{L^\perp} \shomob$ lie on an interval, namely, $\{ \lambda^\T x \st \|x\|
\leq \|y\|,\ a^\T x + d^\T y \leq 0\}$.
Thus, we define the functions
\begin{align*}
  y &\mapsto \max \{ \lambda^\T x \st \|x\| \leq \|y\|, a^\T x + d^\T y \leq
  0 \} \text{ and } \\
  y &\mapsto \min \{ \lambda^\T x \st \|x\| \leq \|y\|, a^\T x + d^\T y \leq
  0 \}.
\end{align*}
If the first function is convex and the second is concave, then the closure of $\stcomp{\proj_{L^\perp} \shomob}$ is the union of the epigraph of the
first one and the hypograph of the second one.
Thus, it suffices to show that
\begin{equation} \label{eq:phi_function}
  \phifun(y) = \max_x \{ \lambda^\T x \st \|x\| \leq \|y\|, a^\T x + d^\T
  y \leq 0 \}
\end{equation}
is convex for every $\lambda \in D_1(0)$, as the second function is $-\phi_{-\lambda}$.

We first show that $\phifun$ is defined over all $\mathbb{R}^m$.
\begin{proposition} \label{prop:interiornonempty}
  If $\|d\| \leq \|a\|$, then for every $y$ the set $\{(x,y) \st \|x\| \leq
  \|y\|, a^\T x \leq -d^\T y\}$ is not empty.
\end{proposition}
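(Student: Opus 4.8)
The plan is to exhibit, for each fixed $y$, an explicit point $x$ witnessing that the set is non-empty. The guiding intuition is that pushing $x$ along the ray $-a$ makes the linear inequality $a^\T x \leq -d^\T y$ as slack as possible, so I would search for a feasible point of the form $x = -t\, a/\|a\|$, choosing $t = \|y\|$ so that the norm constraint $\|x\| \leq \|y\|$ is saturated.

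First I would dispose of the degenerate case $a = 0$: then the hypothesis $\|d\| \leq \|a\| = 0$ forces $d = 0$, and $x = 0$ works since $\|x\| = 0 \leq \|y\|$ and $a^\T x = 0 = -d^\T y$.

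For $a \neq 0$, I would set $x := -\|y\|\, a/\|a\|$, so that $\|x\| = \|y\|$ (the norm constraint holds with equality) and $a^\T x = -\|y\|\,\|a\|$. The only thing left to check is that $-\|y\|\,\|a\| \leq -d^\T y$, i.e.\ $d^\T y \leq \|a\|\,\|y\|$, which is immediate from Cauchy--Schwarz: $d^\T y \leq \|d\|\,\|y\| \leq \|a\|\,\|y\|$, where the last inequality is exactly the assumption $\|d\| \leq \|a\|$. This shows $(x,y)$ lies in the set.

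I do not anticipate a genuine obstacle here: once the candidate point is written down, the verification is a single Cauchy--Schwarz estimate. The only point requiring mild care is the case split on whether $a = 0$, which is precisely why the hypothesis is stated as $\|d\| \leq \|a\|$ rather than by normalizing $a$ from the outset.
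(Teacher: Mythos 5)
Your proof is correct and follows essentially the same approach as the paper: exhibit an explicit witness on the line spanned by $a$ and verify the remaining constraint via Cauchy--Schwarz together with $\|d\| \leq \|a\|$. The only cosmetic difference is that you saturate the norm constraint with $x = -\|y\|\,a/\|a\|$ while the paper saturates the linear constraint with $x = -d^\T y\, a/\|a\|^2$; your explicit treatment of the degenerate case $a = 0$ is a small bonus, since the paper's formula is undefined there.
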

\begin{proof}
  Note that $x = -d^\T y \frac{a}{\|a\|^2}$ belongs to the set.
  Indeed, $a^\T x = -d^\T y$, in particular, $a^\T x \leq -d^\T y$.
  Also, $\|d\| \leq \|a\|$ implies that $\|x\| \leq \frac{\|d\|}{\|a\|} \|y\|
  \leq \|y\|$.
\end{proof}

We now show that $\phifun$ is convex. Furthermore, we prove that $\phifun$ is
sublinear, that is, convex and positive homogeneous.
The proof is basically to find $\phifun$ explicitly and then verify its
properties.
Note that in this case $\|a\| = 0$ implies that the linear inequality in $\shomob$ is trivial. Thus,we assume without loss of generality, that $\|a\| = 1$.
\begin{proposition} \label{prop:phifun_sublinear}
  Let $\lambda, a \in D_1(0) \subseteq \mathbb{R}^n$ and $d \in \mathbb{R}^m$
  such that $\|d\| \leq 1$.
  Then,
  \begin{equation} \label{eq:phi_characterization}
    \phifun(y)
    =
    \begin{cases}
      \|y\|,
    &\text{ if } \lambda^\T a \|y\| + d^\T y \leq 0 \\
    \sqrt{(\|y\|^2 - (d^\T y)^2)(1 - (\lambda^\T a)^2)} - d^\T y \lambda^\T a,
    &\text{ otherwise. }
    \end{cases}
  \end{equation}
  Furthermore, $\phifun$ is sublinear and
  \begin{itemize}
    \item if $\|d\| = 1 \wedge m > 1$, then $\phifun$ is differentiable
      $\mathbb{R}^m \setminus d \mathbb{R}_+$,
    \item otherwise $\phifun$ is differentiable in $\mathbb{R}^m \setminus
      \{0\}$.
  \end{itemize}
\end{proposition}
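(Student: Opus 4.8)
\textbf{Proof plan for Proposition~\ref{prop:phifun_sublinear}.}

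The plan is to solve the optimization problem \eqref{eq:phi_function} explicitly for each fixed $y$, derive the closed form \eqref{eq:phi_characterization}, and then read off sublinearity and the differentiability statements from that formula. First I would dispense with trivial cases: by Proposition~\ref{prop:interiornonempty} the feasible region is nonempty for every $y$, and for $y=0$ the only feasible $x$ is $x=0$ (since $\|x\|\le 0$), so $\phifun(0)=0$, consistent with the first branch. For $y\neq 0$, the problem $\max\{\lambda^\T x \st \|x\|\le\|y\|,\ a^\T x \le -d^\T y\}$ is a convex program with a linear objective over the intersection of a ball and a halfspace. I would split on whether the unconstrained-on-the-ball maximizer $x = \|y\|\lambda$ already satisfies the linear constraint: this happens exactly when $\lambda^\T a\|y\| + d^\T y \le 0$, giving $\phifun(y)=\|y\|$, the first branch. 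Otherwise the linear constraint is active, $a^\T x = -d^\T y$, and we maximize $\lambda^\T x$ over the sphere of radius $\|y\|$ intersected with that hyperplane. That is a classic constrained quadratic problem: write $x = -d^\T y\, a + x^\perp$ with $a^\T x^\perp = 0$ (using $\|a\|=1$), so $\|x^\perp\|^2 = \|y\|^2 - (d^\T y)^2$ (which is $\ge 0$ precisely because the feasible set is nonempty, by Proposition~\ref{prop:interiornonempty}, i.e. $|d^\T y|\le \|d\|\|y\|\le\|y\|$), and then $\lambda^\T x = -d^\T y\,\lambda^\T a + \lambda^\T x^\perp$ is maximized by aligning $x^\perp$ with the projection of $\lambda$ onto $a^\perp$, yielding $\max \lambda^\T x^\perp = \|x^\perp\|\sqrt{1-(\lambda^\T a)^2}$. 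Substituting gives the second branch.

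Next I would verify that the two branches agree on the interface $\lambda^\T a\|y\|+d^\T y = 0$ (substitute $d^\T y = -\lambda^\T a\|y\|$ into the second branch and simplify to $\|y\|$), so $\phifun$ is well defined and continuous, and then establish sublinearity. Positive homogeneity is immediate: both branches and the case split are invariant under $y\mapsto ty$ for $t>0$ (each term is homogeneous of degree one in $y$), so $\phifun(ty)=t\phifun(y)$. For convexity, rather than grinding the Hessian of the messy second branch, I would invoke that $\phifun$ is the value function of a parametric maximization: $\phifun(y) = \max_x\{\lambda^\T x \st \|x\|\le\|y\|,\ a^\T x + d^\T y\le 0\}$. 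The constraint $\|x\|\le\|y\|$ is not jointly convex in $(x,y)$, but one can reformulate the feasible set: $(x,y)$ with $\|x\|\le\|y\|$ is not convex, so instead I would argue convexity of $\phifun$ directly from the explicit formula by recognizing the second branch as $\|y\|_{P} $-type expression — more concretely, I would show $\phifun$ is the support-function-like quantity $\phifun(y) = \max\{ v^\T y \st v \in \mathcal{V}\}$ for a suitable compact convex set $\mathcal{V}\subseteq\mathbb{R}^m$ (a maximum of linear functions is automatically sublinear). Identifying $\mathcal{V}$: by Lagrangian/conic duality of the inner problem (strong duality holds by Slater, as argued around \eqref{eq:strongduality}), $\phifun(y) = \inf_{\theta\ge 0}\ \|y - d\theta\| - \lambda^\T a\,\theta$ — wait, that dual has $y$ appearing linearly inside a norm, hence is concave in $y$ for fixed $\theta$, which is the wrong sign; so instead I would use the \emph{primal} representation and the fact that $\{x \st \|x\|\le\|y\|\}$, intersected with the relevant halfspace and then having $\lambda^\T x$ maximized, produces exactly a support function. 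The cleanest route: observe $\phifun(y) = \max\{\lambda^\T x \st x \in \|y\|\cdot B_1(0),\ a^\T x \le -d^\T y\}$ and note that scaling lets us write, for the nontrivial branch, $x/\|y\| \in B_1(0)$ with $a^\T(x/\|y\|) \le -d^\T(y/\|y\|)$; parametrizing $y/\|y\|$ on the sphere reduces everything to the unit sphere, and the resulting map is a maximum over a fixed compact set of linear functionals of $y$. Hence $\phifun$ is sublinear.

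Finally, the differentiability claims: on the open set where $\lambda^\T a\|y\| + d^\T y < 0$, $\phifun(y)=\|y\|$ which is smooth away from $0$, and on the open set where the inequality is reversed, the second branch is a composition of smooth functions provided the radicand $(\|y\|^2-(d^\T y)^2)(1-(\lambda^\T a)^2)$ is positive; it vanishes only when $\lambda^\T a = \pm 1$ (excluded unless $\lambda = \pm a$, a boundary case one handles separately) or when $|d^\T y| = \|y\|$, i.e. $y \in d\mathbb{R}$ with $\|d\|=1$. So the second branch is smooth except possibly on $d\mathbb{R}_+$ when $\|d\|=1$. On the interface hypersurface $\lambda^\T a\|y\|+d^\T y=0$ I would check that the gradients of the two branches match (the unconstrained maximizer meets the constraint boundary tangentially there), giving $C^1$-gluing; the one place this can fail is where the interface degenerates, which is exactly the excluded ray $d\mathbb{R}_+$ in the case $\|d\|=1\wedge m>1$, and is handled by the case split in the statement. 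I expect the main obstacle to be the convexity proof: the value-function argument is delicate because the ball-radius constraint $\|x\|\le\|y\|$ is not jointly convex, so one cannot simply quote "value function of a convex program is convex"; the honest path is either the support-function identification sketched above or a direct (somewhat tedious) second-derivative computation on the nontrivial branch combined with a careful $C^1$-gluing check across the interface.
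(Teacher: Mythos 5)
Your derivation of the closed form \eqref{eq:phi_characterization} is a legitimate alternative to the paper's route: the paper proves it in \Cref{prop:phi} by exhibiting explicit primal and dual solutions and verifying strong duality, whereas you solve the primal geometrically (case split on whether $\|y\|\lambda$ is feasible; otherwise decompose $x$ along $a$ and $a^\perp$). Either works, and your positive-homogeneity and branch-agreement checks are fine.

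The genuine gap is convexity, and you correctly sense it but do not close it. Your ``cleanest route'' does not go through: writing $x=\|y\|u$ turns the problem into $\|y\|\max\{\lambda^\T u \st \|u\|\le 1,\ a^\T u + d^\T(y/\|y\|)\le 0\}$, where the feasible set for $u$ still depends on the \emph{direction} of $y$, so $\phifun$ is not exhibited as a maximum of a fixed compact family of linear functionals, and the candidate set $\mathcal{V}=\conv\{\nabla\phifun(\beta)\}$ can only be justified \emph{after} sublinearity is known (circular). Your fallback --- convexity on each branch plus $C^1$ gluing across the interface --- is essentially the paper's strategy, but it is not automatic: the regions $A_1=\{y \st \lambda^\T a\|y\|+d^\T y\le 0\}$ and $A_2=\{y \st \lambda^\T a\|y\|+d^\T y\ge 0\}$ are non-convex cones, so a segment between two points of $A_1$ can cross $A_2$, and gluing piecewise-convex functions over non-convex pieces requires a dedicated result (the paper invokes \cite[Theorem 3]{Solovev1983}). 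More importantly, the interface contains the origin, where $\phifun$ is \emph{not} differentiable, so $C^1$ gluing fails exactly there; the missing check is the directional-derivative condition $\phifun'(0;\rho)+\phifun'(0;-\rho)\ge 0$ for $\rho\in A_1$, $-\rho\in A_2$ (equivalently, subadditivity of the homogeneous function along lines through $0$), which the paper verifies by a separate Cauchy--Schwarz estimate using $\|d\|\le 1$. Your plan locates the only possible trouble at the ray $d\mathbb{R}_+$ when $\|d\|=1$, but the real obstruction to the gluing argument is at $y=0$, which is present in every case. Finally, the paper avoids any Hessian computation on the second branch by recognizing it as $c_1\|y\|_W - c_2\, d^\T y$ with $W=I-dd^\T\succeq 0$, i.e.\ a seminorm minus a linear function --- worth adopting in place of your ``tedious second-derivative computation.''
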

\begin{proof}
  The fact that $\phifun$ is positive homogeneous can be easily verified.
  We leave the proof that $\phifun$ is of the form
  \eqref{eq:phi_characterization} in the appendix (\Cref{prop:phi}). Thus convexity and differentiability remains.

  First, note that if $\lambda = a$, then $\phifun(y) = -d^\T y$.
  This function is clearly sublinear and differentiable everywhere.
  On the other hand, if $\lambda = -a$, then $\phifun(y) = \|y\|$.
  This function is clearly sublinear and differentiable everywhere but the
  origin.

  We now consider $\lambda \neq \pm a$.
  Let
  \begin{equation} \label{eq:A_primal}
    \begin{split}
      A_1 &= \{ y \st \lambda^\T a \|y\| + d^\T y \leq 0 \}, \\
      A_2 &= \{ y \st \lambda^\T a \|y\| + d^\T y \geq 0 \},
    \end{split}
  \end{equation}
  and let $\phifun^1$ and $\phifun^2$ be the restriction of $\phifun$ to $A_1$
  and $A_2$, respectively.

  To show that $\phifun$ is convex we are going to use~\cite[Theorem
  3]{Solovev1983}. In our particular case, since $\phifun$ is
  positively homogeneous, this theorem implies that we just need to check that $\phifun$ is convex on each
  convex subset of $A_1$ and $A_2$, $\phifun^1 = \phifun^2$ on $A_1 \cap A_2$,
  and that
  \begin{equation} \label{eq:sum_directional_derivatives}
    \phi'_\lambda(y;\rho) + \phi'_\lambda(y;-\rho) \geq 0, \text{ for all
    } \rho \in \mathbb{R}^m \setminus \{0\}, y \in A_1 \cap A_2.
  \end{equation}
  Here, $\phifun'(y;\rho)$ is the directional derivative of $\phifun$ at $y$ in
  the direction of $\rho$.

  Clearly, $\phifun$ is convex in each convex subset of $A_1$.
  The function $\phifun^2$ is of the form $c_1 \|y\|_W - c_2 d^\T y$, where $W
  = I - d d^\T \succeq 0$ and $c_1, c_2$ are constants.
  Thus, $\phifun$ is convex on each convex subset of $A_2$.

  It is not hard to see that $\phifun^1(y) = \phifun^2(y)$ for $y \in A_1 \cap
  A_2$.

  Let us verify \eqref{eq:sum_directional_derivatives} for $y\neq 0$.
  For this, first notice that $\phifun^1(y)$ is differentiable whenever $y \neq
  0$.
  Likewise, $\phifun^2(y)$ is differentiable whenever $y \neq 0$ if $\|d\| < 1$
  or whenever $y \notin d \mathbb{R}_+$ if $\|d\| = 1$.
  However, if $y \in A_1 \cap A_2 \setminus \{0\}$ and $\|d\| = 1$, then $y
  \notin d \mathbb{R}_+$, thus $\phifun^2$ is differentiable in a neighborhood
  of $y$.
  Furthermore,
  \begin{align*}
    \nabla \phifun^2(y) &= \frac{(1 - (\lambda^\T a)^2) (I - dd^\T) y}{\sqrt{(\|y\|^2 - (d^\T y)^2)(1 - (\lambda^\T a)^2)} } - \lambda^\T a d \\
                        & = \frac{1}{\|y\|} (I - dd^\T) y - \lambda^\T a d \\
                        & = \frac{y}{\|y\|}  \\
                        & = \nabla \phifun^1(y).
  \end{align*}
  Therefore, $\phifun$ is differentiable in whenever $y \neq 0$ if $\|d\| < 1$
  or whenever $y \notin d \mathbb{R}_+$ if $\|d\| = 1$.
  Thus, \eqref{eq:sum_directional_derivatives} holds with equality for $y \in
  A_1 \cap A_2 \setminus \{0\}$.

  It remains to verify \eqref{eq:sum_directional_derivatives} for $y = 0$.
  Let $\rho$ be such that $\rho \in A_1$ and $-\rho \in A_2$.
  As $\phifun$ is positively homogeneous, $\phifun'(0;\cdot) =\phifun(\cdot)$.
  Hence,
  \begin{equation*}
    \phifun'(0; \rho) = \|\rho\| \text{ and } \phi'_\lambda(0; -\rho)
    = \sqrt{1 - (\lambda^T a)^2}\sqrt{\|\rho\|^2 - (d^\T \rho)^2} + d^\T \rho
    \lambda^T a.
  \end{equation*}

  We need to prove that
  \[
    \sqrt{1 - (\lambda^T a)^2}\sqrt{\|\rho\|^2 - (d^\T \rho)^2} + d^\T \rho
    \lambda^T a + \|\rho\| \geq 0.
  \]
  By Cauchy-Schwarz, $| d^\T \rho \lambda^T a | \leq \|d\|\|\rho\| < \|\rho\|$.
  Thus, $d^\T \rho \lambda^T a + \|\rho\| > 0$.
  Since $\sqrt{1 - (\lambda^T a)^2}\sqrt{\|\rho\|^2 - (d^\T \rho)^2} \geq 0$,
  the inequality follows.
  Therefore, $\phifun$ is convex.

  We have proved that $\phifun$ is convex and differentiable in $\mathbb{R}^m
  \setminus \{0\}$ if $\|d\| < 1$ and in $\mathbb{R}^m \setminus d\mathbb{R}_+$
  if $\|d\| = 1$.
  It remains to show that if $m = 1$ and $\|d\|=1$, then $\phifun$ is
  differentiable in $\mathbb{R}^m \setminus \{0\}$.
  This follows from \eqref{eq:phi_characterization} since $\phifun^2(y) = - dy
  \lambda^\T a$ in this case.
  This concludes the proof.
\end{proof}

With this, we have completed the proof of sublinearity of $\phifun$. Moreover,
we have explicitly described the function. As a corollary:

\begin{corollary}
  The epigraph of $\phifun$ and the hypograph of $-\phi_{-\lambda}$ are maximal $\proj_{L^\perp} \shomob$-free sets.
\end{corollary}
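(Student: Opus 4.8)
The plan is to realize both sets as the epigraph and hypograph of sublinear functions and then apply the exposing-point machinery, exactly as in the proof of \Cref{thm:max_homo}. First I would record the shape of the set to be avoided. Since $L^\perp = \langle \lambda\rangle \times \RR^m$ we have $\proj_{L^\perp}\shomob = \{(\lambda^\T x, y)\st \|x\|\le\|y\|,\ a^\T x + d^\T y\le 0\}$; for each fixed $y$ the admissible $x$ form a nonempty (by \Cref{prop:interiornonempty}) compact convex set, so the values taken by the linear functional $x\mapsto\lambda^\T x$ on it form a compact interval, whose right endpoint is $\phifun(y)$ by definition and whose left endpoint is $-\phifunm(y)$ (since $\min_x\lambda^\T x = -\max_x(-\lambda)^\T x$). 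Hence
\[
  \proj_{L^\perp}\shomob = \{(t,y)\in\RR\times\RR^m \st -\phifunm(y)\le t\le\phifun(y)\},
\]
which is closed because $\phifun$ and $\phifunm$ are finite (by \Cref{prop:interiornonempty}) and, being convex by \Cref{prop:phifun_sublinear}, continuous.

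Next comes freeness: $\epi\phifun$ is convex (as $\phifun$ is convex) and closed, and its interior is $\{(t,y)\st t>\phifun(y)\}$, which is disjoint from $\proj_{L^\perp}\shomob$ since the latter forces $t\le\phifun(y)$; symmetrically, the hypograph $\{(t,y)\st t\le -\phifunm(y)\}$ of $-\phifunm$ has interior $\{(t,y)\st t<-\phifunm(y)\}$, disjoint from $\proj_{L^\perp}\shomob$. So both sets are $\proj_{L^\perp}\shomob$-free, and it remains to prove maximality via \Cref{thm:exposed_maximality}.

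For maximality of $\epi\phifun$, observe that $\epi\phifun = \{(t,y)\st \phifun(y)\le 1\cdot t\}$ is precisely of the form of the set $C$ in \Cref{lemma:exposing_ineq}, with the unit vector $1\in D_1(0)\subseteq\RR$ in the role of $\lambda$ and the sublinear $\phifun$ in the role of $\phi$. Writing $D$ for the (dense) set of points where $\phifun$ is differentiable --- $\RR^m\setminus\{0\}$, or $\RR^m\setminus d\RR_+$ in the exceptional case of \Cref{prop:phifun_sublinear} --- I would argue that $\epi\phifun = \{(t,y)\st -t+\nabla\phifun(y_0)^\T y\le 0\ \forall y_0\in D\}$: the gradients $\nabla\phifun(y_0)$, $y_0\in D$, are exactly the exposed points of the subdifferential $\partial\phifun(0)$, their closed convex hull is all of $\partial\phifun(0)$, and $\phifun(y)=\max_{g\in\partial\phifun(0)}g^\T y$ by sublinearity. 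For each such inequality, \Cref{lemma:exposing_ineq} --- together with positive homogeneity, giving $\phifun(y_0)=\nabla\phifun(y_0)^\T y_0$ --- shows that $(\phifun(y_0),y_0)$ exposes it. This point is on the boundary of $\epi\phifun$, and it lies in $\proj_{L^\perp}\shomob$ precisely because $-\phifunm(y_0)\le\phifun(y_0)$: any feasible $x$ for the definition of $\phifun(y_0)$ (one exists by \Cref{prop:interiornonempty}) satisfies $\phifun(y_0)\ge\lambda^\T x$ and $\phifunm(y_0)\ge -\lambda^\T x$, so the two sum to a nonnegative number. Thus every defining inequality of $\epi\phifun$ is exposed by a point of $\proj_{L^\perp}\shomob\cap\epi\phifun$, and \Cref{thm:exposed_maximality} yields maximality. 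The hypograph of $-\phifunm$ is handled identically, with $-1\in D_1(0)$ in the role of $\lambda$, with $\phifunm$ (sublinear by \Cref{prop:phifun_sublinear} applied to $-\lambda$) in the role of $\phi$, and with exposing points $(-\phifunm(y_0),y_0)$; the same inequality $-\phifunm(y_0)\le\phifun(y_0)$ places these in $\proj_{L^\perp}\shomob$.

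The step I expect to be the main obstacle is justifying the representation of $\epi\phifun$ as the intersection of only the supporting halfspaces coming from differentiability points --- i.e., that discarding the non-smooth directions (the single point $0$, or the ray $d\RR_+$) loses no facet of the cone $\epi\phifun$. This is the same subtlety already navigated in \Cref{thm:max_homo} for the Euclidean norm, whose only non-smooth point is the origin; here one must additionally verify, in the case $\|d\|=1$, $m>1$, that every subgradient of $\phifun$ along the ray $d\RR_+$ is a limit of gradients at nearby smooth points, which follows from the explicit formula \eqref{eq:phi_characterization} for $\phifun$ together with upper semicontinuity of the subdifferential. Everything else is routine bookkeeping once the interval description of $\proj_{L^\perp}\shomob$ is established.
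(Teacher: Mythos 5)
Your proof is correct, but it takes a genuinely different route from the one the paper intends. The paper gets this corollary essentially for free from the interval description that you also derive: once one knows that $\stcomp{\proj_{L^\perp}\shomob}$ is the disjoint union of the two open convex sets $\{(t,y)\st t>\phifun(y)\}$ and $\{(t,y)\st t<-\phifunm(y)\}$ (which is exactly what the convexity of $\phifun$ and $\phifunm$ from \Cref{prop:phifun_sublinear} delivers), any convex $\proj_{L^\perp}\shomob$-free set $C'$ containing, say, $\epi\phifun$ has a nonempty \emph{connected} interior contained in that disjoint union and meeting the first piece, hence $\inte(C')$ lies entirely inside $\{(t,y)\st t>\phifun(y)\}$ and $C'\subseteq\cl(\inte(C'))=\epi\phifun$. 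No exposing points, no smoothness analysis, and no halfspace representation of the epigraph are needed, which is why the paper can state this as an immediate corollary. Your argument instead re-runs the exposing-point machinery (\Cref{lemma:exposing_ineq} together with \Cref{thm:exposed_maximality}) inside the projected space; this is sound, and it is essentially the same computation the paper performs later, in the original space, to prove \Cref{prop:max_homo_bad} directly --- including the same delicate point about the non-differentiability locus $d\RR_+$ when $\|d\|=1$ and $m>1$, where your fix (the gradients at smooth points already generate $\partial\phifun(0)$ as a closed convex hull, so dropping the single bad direction loses nothing) matches the paper's. What your route buys is uniformity with the paper's general toolkit and independence from the special two-component structure of the complement; what it costs is precisely the extra work you flag. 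Both arguments are valid, but the connectivity argument is the one that makes this a one-line corollary rather than a proposition.
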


While this result provides two convex sets, it is not clear which one to chose. This means, which of these two constructed $\proj_{L^\perp} \shomob$-free sets will yield an $\shomob$-free containing the given solution $(\xlp, \ylp)$. We answer this next.
\begin{lemma} \label{lemma:s_free_contains_inf_point}
  Consider $(\xlp, \ylp)$ such that $\|\xlp\| > \|\ylp\|$ and $a^\T \xlp + d^\T
  \ylp \leq 0$ and $\lambda = \frac{\xlp}{\|\xlp\|}$.
  Then, the projection of $(\xlp, \ylp)$ onto $L^\perp$ is in the interior of
  the epigraph of $\phifun$. 
\end{lemma}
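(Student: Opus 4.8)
The plan is to verify directly that $\lambda^\T \xlp > \phifun(\ylp)$, which is exactly the statement that the projection $(\lambda^\T \xlp, \ylp)$ lies in the interior of the epigraph of $\phifun$. I would use the closed-form description of $\phifun$ from \Cref{prop:phifun_sublinear} and split into the two cases of \eqref{eq:phi_characterization}.

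First I would reduce to the normalized setting, $\|a\| = 1$ (when $\|a\| = 0$ the linear inequality is trivial and $\phifun(y) = \|y\|$, so $\lambda^\T\xlp = \|\xlp\| > \|\ylp\| = \phifun(\ylp)$ and we are done). Note that $\lambda^\T \xlp = \|\xlp\|$ since $\lambda = \xlp/\|\xlp\|$, so in both cases the inequality to be shown reduces to $\|\xlp\| > \phifun(\ylp)$. In the first case of \eqref{eq:phi_characterization}, where $\lambda^\T a \|\ylp\| + d^\T \ylp \leq 0$, we have $\phifun(\ylp) = \|\ylp\| < \|\xlp\|$ immediately from the hypothesis $\|\xlp\| > \|\ylp\|$, and we are done.

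The main work is the second case, where $\phifun(\ylp) = \sqrt{(\|\ylp\|^2 - (d^\T\ylp)^2)(1 - (\lambda^\T a)^2)} - d^\T\ylp\,\lambda^\T a$. Here I would exploit the feasibility inequality $a^\T\xlp + d^\T\ylp \leq 0$, i.e. $d^\T\ylp \leq -a^\T\xlp$. Writing $\xlp = (a^\T\xlp)a + \xlp^{\perp}$ with $\xlp^{\perp} \perp a$, and similarly decomposing $\ylp$ along $d$, the strategy is to bound $\phifun(\ylp)$ from above by a quantity involving $\xlp$. Concretely, since $\phifun$ is exactly the optimal value $\max\{\lambda^\T x : \|x\| \leq \|\ylp\|,\ a^\T x + d^\T\ylp \leq 0\}$, it suffices to produce \emph{any} feasible $x$ for this program whose objective value $\lambda^\T x$ is at least $\phifun(\ylp)$ but is strictly less than $\|\xlp\|$ — or, more cleanly, to argue that $\xlp$ itself would be feasible for a \emph{slightly enlarged} program. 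The cleanest route: observe that $\phifun(\ylp) = \max\{\lambda^\T x : \|x\| \leq \|\ylp\|,\ a^\T x \leq -d^\T\ylp\}$; since $\|\ylp\| < \|\xlp\|$ and $-d^\T\ylp \geq a^\T\xlp$, every $x$ feasible for this program satisfies $\|x\| \leq \|\ylp\| < \|\xlp\|$, hence $\lambda^\T x \leq \|x\| < \|\xlp\| = \lambda^\T\xlp$ by Cauchy–Schwarz and $\|\lambda\|=1$. Taking the max over such $x$ gives $\phifun(\ylp) \leq \|\ylp\| < \|\xlp\|$ — wait, this actually gives the bound uniformly, so in fact the case split is not even needed: in all cases $\phifun(\ylp) \leq \|\ylp\| < \|\xlp\| = \lambda^\T\xlp$, because the feasible set of the maximization defining $\phifun(\ylp)$ is contained in the ball $B_{\|\ylp\|}(0)$.

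I expect the only subtlety is making sure the epigraph membership is \emph{strict} (interior, not just boundary), which is handled by the strict inequality $\|\ylp\| < \|\xlp\|$ being strict — this comes from $(\xlp,\ylp) \notin \shomob$ together with $a^\T\xlp + d^\T\ylp \leq 0$, which forces $\|\xlp\| > \|\ylp\|$ rather than $\geq$. One should also note that $\epi \phifun \subseteq L^\perp = \langle\lambda\rangle \times \mathbb{R}^m$ lives in the right ambient space, and that $\proj_{L^\perp}(\xlp,\ylp) = (\lambda^\T\xlp\,\lambda,\ \ylp)$ corresponds to the point $(\|\xlp\|, \ylp)$ in the coordinates $(\lambda^\T x, y)$ used to write $\proj_{L^\perp}\shomob$; since $\|\xlp\| > \phifun(\ylp)$, this point is in the interior of the epigraph, as claimed.
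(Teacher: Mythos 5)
Your final argument --- that the feasible set of the maximization defining $\phifun(\ylp)$ lies in $B_{\|\ylp\|}(0)$, so $\phifun(\ylp) \leq \|\ylp\| < \|\xlp\| = \lambda^\T \xlp$ --- is exactly the paper's proof, which is a three-line computation with no case split. The detour through the closed form \eqref{eq:phi_characterization} was unnecessary, as you yourself noticed midway, and the rest is correct.
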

\begin{proof}
  The projection of $(\xlp, \ylp)$ onto $L^\perp$ is given by $(\lambda^\T \xlp,
  \ylp)$.
  Then, $\phifun(\ylp) = \max_x \{ \lambda^\T x \st \|x\| \leq \|\ylp\|, a^\T
  x + d^\T \ylp \leq 0 \} \leq \lambda^\T \lambda \|\ylp\| = \|\ylp\|$.
  Thus, $\lambda^\T \xlp = \|\xlp\| > \|\ylp\| \geq \phifun(\ylp)$.
\end{proof}

\subsubsection{Back to the original space}
Finally, we use the above to construct $\shomob$-free sets, i.e., in the original space.
Embedded in $\mathbb{R}^{n+m}$, the epigraph of $\phifun$ is
$\{ (t \lambda, y) \st y \in \mathbb{R}^m,\ \phifun(y) \leq t \}$.
Thus,
\begin{align}
  \maxhomobb &= \{ (t \lambda, y) \st y \in \mathbb{R}^m,\ \phifun(y) \leq t \} + L \nonumber \\
  &= \{ (t \lambda + z, y) \st y \in \mathbb{R}^m,\ \lambda^T z = 0,\ \phifun(y) \leq t \}  \nonumber  \\
  &= \{ (x, y) \st \phifun(y) \leq \lambda^\T x \}. \label{eq:cphilambdadef}
\end{align}
As a summary we prove that $\maxhomobb$ is maximal $\shomob$-free without going
through the projection.
\begin{proposition} \label{prop:max_homo_bad}
  Let $\lambda \in D_1(0)$ and $\phifun(y) = \max_x \{\lambda^\T x \st (x,y) \in \shomob\}$.
  If $ \|a\| = 1 \geq \|d\|$, then $\maxhomobb = \{ (x, y) \st \phifun(y) \leq
  \lambda^\T x \}$ is maximal $\shomob$-free. 
  
  Additionally, if $(\xlp, \ylp) \notin \shomob$ is such that $a^\T \xlp + d^\T \ylp \leq 0$, letting $\lambda = \frac{\xlp}{\|\xlp\|}$ ensures $(\xlp, \ylp) \in \inte(\maxhomobb)$.
\end{proposition}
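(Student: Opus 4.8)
The plan is to give a direct proof via the exposing-point criterion of \Cref{thm:exposed_maximality}, using \Cref{lemma:exposing_ineq} and the explicit sublinear description of $\phifun$ from \Cref{prop:phifun_sublinear}; the intermediate projection results are not needed for this argument (which is why the proposition is phrased ``without going through the projection'').

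First I would dispatch $\shomob$-freeness: if $(x,y)\in\shomob$ then $x$ is feasible for the program defining $\phifun(y)$, hence $\lambda^\T x\le\phifun(y)$, so $(x,y)$ lies outside $\{(x,y)\st\phifun(y)<\lambda^\T x\}$, which contains $\inte(\maxhomobb)$ because $\phifun$ is continuous and $\maxhomobb$ is the $0$-sublevel set of the convex function $(x,y)\mapsto\phifun(y)-\lambda^\T x$ (nonconstant on every open set). Next I would produce the semi-infinite representation required by \Cref{thm:exposed_maximality}. Let $\mathcal D\subseteq\mathbb{R}^m$ be the set of points where $\phifun$ is differentiable; by \Cref{prop:phifun_sublinear} its complement has empty interior, so $\mathcal D$ is dense. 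For $\rho\in\mathcal D$, positive homogeneity gives $\phifun(\rho)=\nabla\phifun(\rho)^\T\rho$, and the subgradient inequality at $\rho$ then yields $\nabla\phifun(\rho)^\T y\le\phifun(y)$ for all $y$, with equality at $y=\rho$. Thus $y\mapsto\sup_{\rho\in\mathcal D}\nabla\phifun(\rho)^\T y$ is convex, is $\le\phifun$ everywhere, and coincides with $\phifun$ on the dense set $\mathcal D$; being locally bounded above by the continuous function $\phifun$, it is itself continuous, so it equals $\phifun$ on all of $\mathbb{R}^m$. Therefore
\[
  \maxhomobb=\{(x,y)\st -\lambda^\T x+\nabla\phifun(\rho)^\T y\le 0,\ \forall\rho\in\mathcal D\}.
\]

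Then I would exhibit, for each inequality $(-\lambda,\nabla\phifun(\rho))$ with $\rho\in\mathcal D$, an exposing point in $\shomob\cap\maxhomobb$. The feasible region of the program defining $\phifun(\rho)$ is nonempty by \Cref{prop:interiornonempty} (using $\|d\|\le\|a\|=1$) and compact, so a maximizer $x_\rho$ exists; it satisfies $\|x_\rho\|\le\|\rho\|$, $a^\T x_\rho+d^\T\rho\le 0$ and $\lambda^\T x_\rho=\phifun(\rho)$, so $(x_\rho,\rho)\in\shomob$ and also $(x_\rho,\rho)\in\maxhomobb$ with the defining inequality tight. Since $\phifun$ is sublinear, $\lambda\in D_1(0)$, and $\phifun$ is differentiable at $\rho$ with $\phifun(\rho)=\lambda^\T x_\rho$, \Cref{lemma:exposing_ineq} gives that $(x_\rho,\rho)$ exposes $-\lambda^\T x+\nabla\phifun(\rho)^\T y\le 0$ with respect to $\maxhomobb$. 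Combined with the representation above, \Cref{thm:exposed_maximality} yields that $\maxhomobb$ is maximal $\shomob$-free. For the final claim, taking $\lambda=\xlp/\|\xlp\|$ gives $\phifun(\ylp)=\max\{\lambda^\T x\st\|x\|\le\|\ylp\|,\ a^\T x+d^\T\ylp\le 0\}\le\|\ylp\|<\|\xlp\|=\lambda^\T\xlp$ (the computation of \Cref{lemma:s_free_contains_inf_point}), so $(\xlp,\ylp)\in\inte(\maxhomobb)$.

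The step I expect to be the main obstacle is the representation step, specifically verifying $\phifun(y)=\sup_{\rho\in\mathcal D}\nabla\phifun(\rho)^\T y$ at the non-differentiability points of $\phifun$: this is a standard fact about finite sublinear functions (it amounts to $\phifun$ being the support function of the closed convex hull of its gradient set), but it is the one place where the argument is not purely formal and a density/continuity argument is genuinely needed, since the set of attained gradients is in general not closed. A minor secondary point is confirming that $x_\rho$ indeed lies in $\shomob$ and that $(x_\rho,\rho)$ lies on the boundary of $\maxhomobb$, so that the exposing point fed to \Cref{thm:exposed_maximality} belongs to $\shomob\cap\maxhomobb$; both facts follow directly from the constraints of the defining program.
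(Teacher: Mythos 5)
Your proof is correct and follows essentially the same route as the paper's: write $\maxhomobb$ as an intersection of half-spaces $-\lambda^\T x+\nabla\phifun(\beta)^\T y\le 0$, expose each one at the primal optimizer $(\xbeta,\beta)\in\shomob\cap\maxhomobb$ of the program defining $\phifun(\beta)$ via \Cref{lemma:exposing_ineq}, and conclude with \Cref{thm:exposed_maximality}. The only difference is in handling the non-differentiability locus: the paper splits into cases and, when $\|d\|=1\wedge m>1$, simply drops the single inequality at $\beta=d$ from the sphere-indexed description, whereas you index over all differentiability points and justify the semi-infinite representation by a density/continuity argument for the finite sublinear function $\phifun$ --- both are valid, and yours treats all cases uniformly at the cost of that extra (standard but genuinely needed) lemma.
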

\begin{proof}
  We will prove that $\maxhomobb$ is convex, free and maximal.

  The convexity of $\maxhomobb$ follows directly from
  $\Cref{prop:phifun_sublinear}$.
  Also, $\maxhomobb$ is $\shomob$-free since if $(x,y) \in \shomob$, then
  $\phifun(y) \geq \lambda^\T x$.
  Therefore, $(x,y)$ is not in the interior of $\maxhomobb$.

  We now focus on proving maximality. In the cases where $\phifun$ is differentiable in $\mathbb{R}^m\setminus \{0\}$ we can directly write 
  \[
    \maxhomobb = \{ (x,y) \in \mathbb{R}^{n+m} \st \nabla \phifun(\beta)^\T
    y \leq \lambda^\T x,\ \forall \beta \in D_1(0)\}.
  \]
  Let $\beta \in D_1(0)$ and let $\xbeta$ be the optimal
  solution of the problem \eqref{eq:phi_function} which defines $\phifun(\beta)$. 
  That is, $\lambda^\T \xbeta = \phifun(\beta)$.
  By \Cref{lemma:exposing_ineq}, the inequality $-\lambda^\T x + \nabla \phifun(\beta)^\T y \leq 0$ is exposed by $(\xbeta,\beta)$.
  
  The only remaining case is $\|d\| = 1 \, \wedge \, m>1$, where $\phifun$ is only differentiable in $D_1(0) \setminus \{d\}$. Since in this case $m>1$ we can safely remove a single inequality from the outer-description of $\maxhomobb$ without affecting it, i.e., 
  \[
    \maxhomobb = \{ (x,y) \in \mathbb{R}^{n+m} \st \nabla \phifun(\beta)^\T
    y \leq \lambda^\T x,\ \forall \beta \in D_1(0)\setminus \{d\}\}.
  \]
  Using the same argument as above we can find an exposing point of each inequality $-\lambda^\T x + \nabla \phifun(\beta)^\T y \leq 0$ for $\beta \in D_1(0) \setminus \{d\}$.

  The fact that $(\xlp, \ylp) \in \inte(\maxhomobb)$ when $\lambda = \frac{\xlp}{\|\xlp\|}$ follows directly since $\maxhomo \subseteq \maxhomobb$. 

\end{proof}

\begin{example} \label{ex:simple3dmaximalhomo}
  Let us recall the set $\shomorunning$ in \Cref{ex:simple3D}.
   \[\shomorunning = \{(x_1, x_2, y) \in \mathbb{R}^3 \st \|x \| \leq |y|, \quad a^\T x + dy \leq 0 \} \]
 
   with $a = (-1/\sqrt{2},1/\sqrt{2})^\T $, $d=1/\sqrt{2}$, and $(\bar{x},\bar{y}) = (-1,-1,0)^\T$. In \Cref{fig:simpleexample3dbad} we showed that the set $\maxhomo$ is $\shomorunning$-free but not maximal, and $\maxhomobg$ is not $\shomorunning$-free. In \Cref{fig:simpleexample3dmaximal} we show the set $\maxhomobb$, which is maximal $\shomorunning$-free. For this example, we know $\lambda^\T a = 0$, thus 
   \[\lambda^\T a \| y\|  + d^\T y \leq 0 \Longleftrightarrow y\leq 0. \]
   A simple calculation using \eqref{eq:phi_characterization} yields
   \[\phifun(y) =  \begin{cases}
     - y ,
      &\text{ if } y \leq 0 \\
      \frac{y}{\sqrt{2}} 
      &\text{ if } y > 0 
  \end{cases} \]
   \begin{figure}
      \centering
      \includegraphics[scale=0.3]{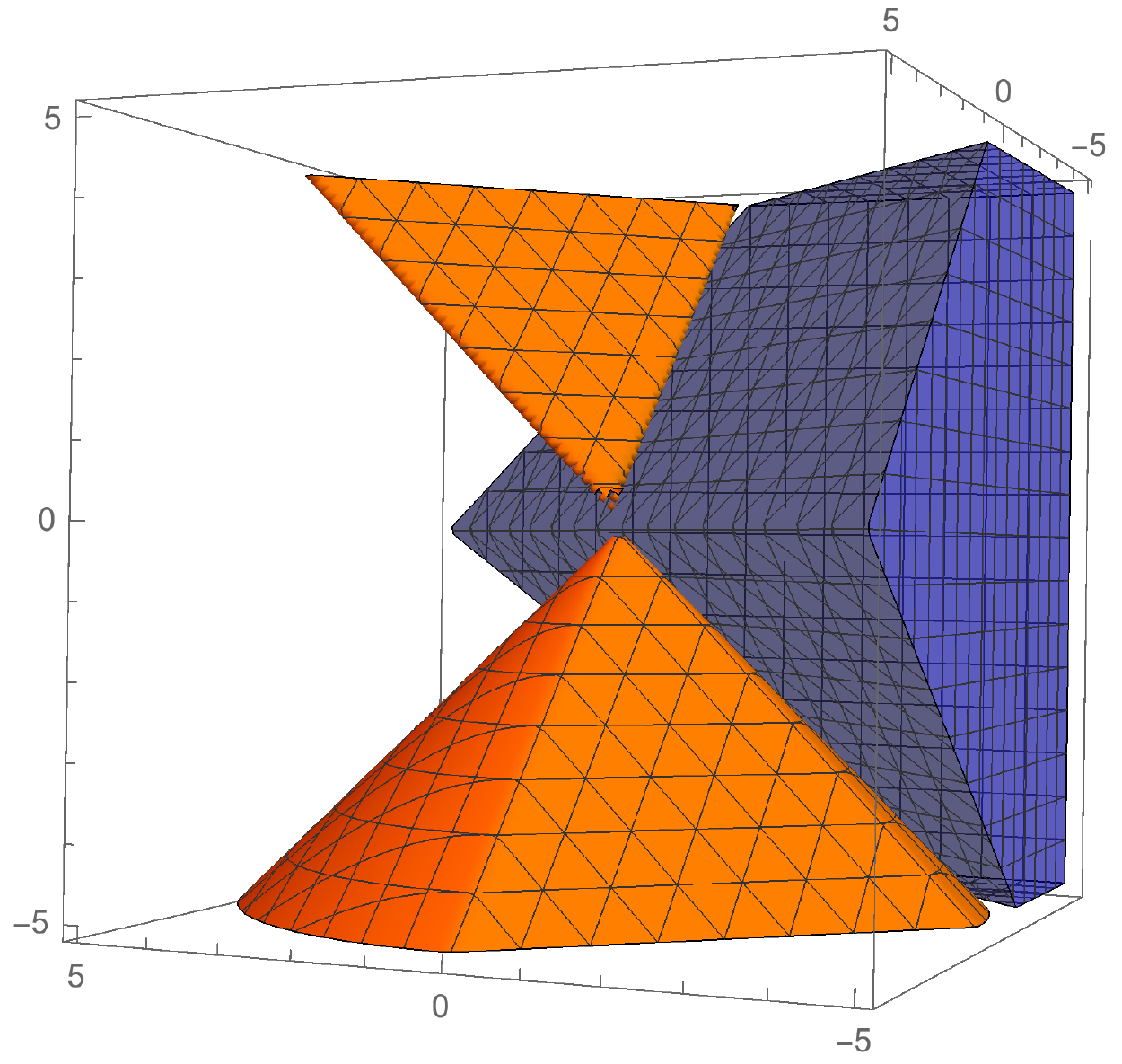}
      \caption{$\shomorunning$ in \Cref{ex:simple3D} (orange) and $\maxhomobb$ set (blue). The latter is maximal $\shomorunning$-free.}
      \label{fig:simpleexample3dmaximal}
  \end{figure}
\end{example}

\begin{remark} \label{rmk:lambda_eq_a}
  As we saw in the proof of \Cref{prop:phifun_sublinear} if $\lambda = a$, then
  $\phifun(y) = -d^\T y$.
  This implies that $\maxhomobb = \{(x,y) \st a^\T x + d^\T y \geq 0\}$.
  By definition, this set does not contain any point from $a^\T x + d^\T
  y \leq 0$ in its interior, thus, it is a very uninteresting maximal $\shomob$-free set.
  One is usually interested in constructing a maximal $\shomob$-free set that
  contain a point $(\xlp, \ylp)$ that satisfies $a^\T x + d^\T y \leq 0$.
  Hence, by Lemma~\ref{lemma:s_free_contains_inf_point}, whenever we assume that
  $\lambda = \frac{\xlp}{\|\xlp\|}$ where $a^\T \xlp + d^\T \ylp \leq 0$ and
  $\|\xlp\| > \|\ylp\|$, it will automatically hold that $\lambda \neq a$.
\end{remark}

\begin{remark} \label{rmk:whats_going_on}
  At this point we would like to show some relations between $\maxhomo$,
  $\maxhomobb$ and $\maxhomobg$.
  The inequalities defining $\maxhomo$ are $(-\lambda,\beta)$ for $\beta \in
  D_1(0)$.
  Equivalently, the polar of $\maxhomo$ is the cone generated by $\{-\lambda\} \times \conv D_1(0)
  = \{-\lambda\} \times B_1(0)$.

  The inequalities defining $\maxhomobg$ are $(-\lambda,\beta)$ for $\beta \in
  G(\lambda) = \{ \beta \in D_1(0) \st \beta \lambda^\T a + d^\T \beta \leq
  0 \}$.
  Equivalently, the polar of $\maxhomobg$ is the cone generated by $\{-\lambda\} \times \conv
  G(\lambda)$.

  The inequalities defining $\maxhomobb$ are $(-\lambda, \nabla \phifun(\beta))$
  for $\beta \in D_1(0)$.
  When $\beta \in G(\lambda)$, then $\phifun(y) = \|y\|$ and so the inequalities
  are $(-\lambda, \beta)$.
  In other words, some inequalities defining $\maxhomobb$ coincide with the
  inequalities defining $\maxhomobg$ and $\maxhomo$.
  Thus, when $\maxhomobb$ is convex (i.e., when $\|a\| \geq \|d\|$), there is
  a region where all three convex sets look the same.
  In terms of the polars, when $\|a\| \geq \|d\|$, the polar of $\maxhomobb$ is
  between the polars of $\maxhomobg$ and $\maxhomo$.
  This is depicted in Figure~\ref{fig:polars}.
  \begin{figure}
    \centering
    \includegraphics[scale=0.3]{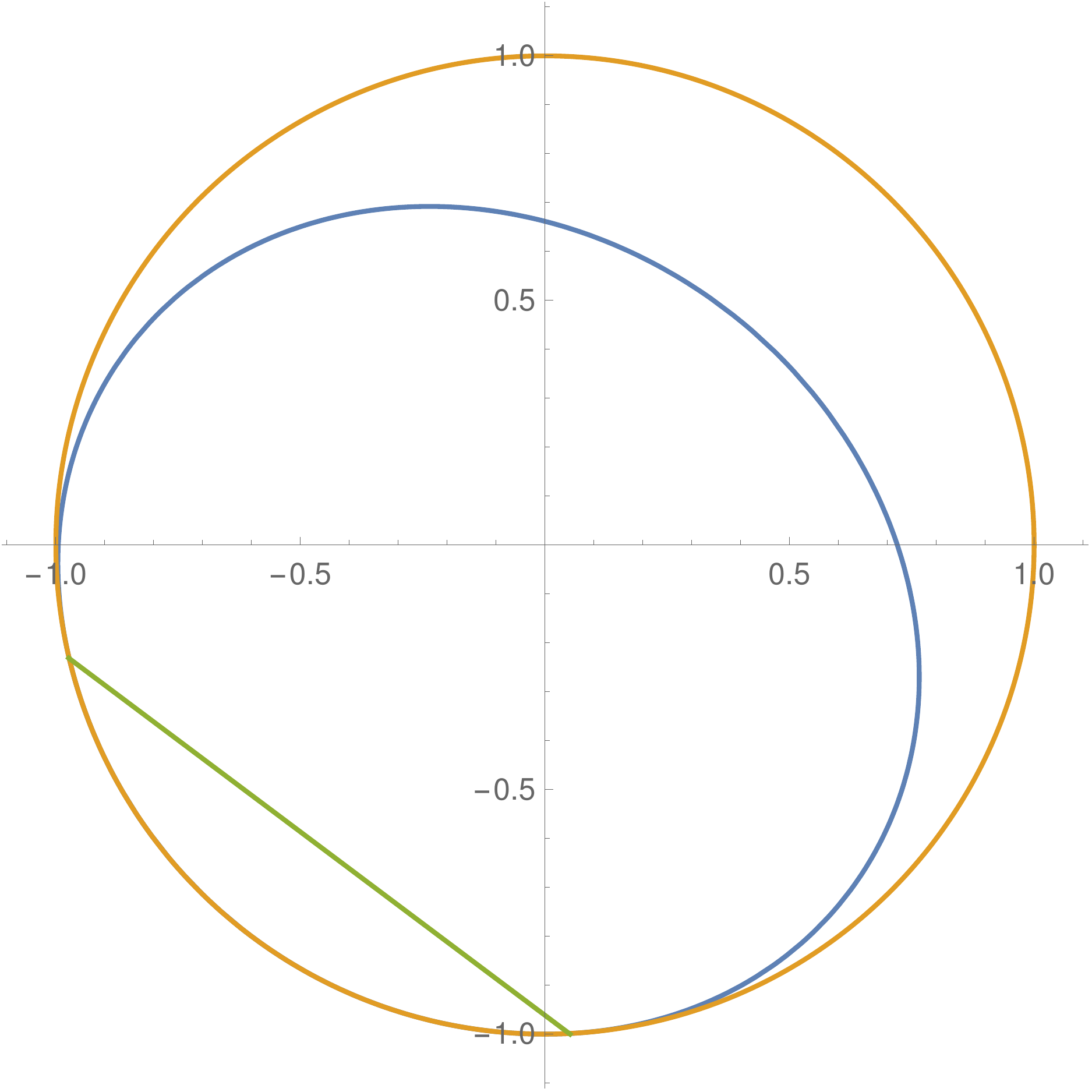}
    \caption{%
      Let $a = (\tfrac{3}{5},-\frac{4}{5}), d = (\frac{3}{10},\frac{2}{5})$, and
      $\lambda = (\frac{63}{65},\frac{16}{65})$. The boundary of the $y$
      coordinates of the polars of $\maxhomo$, $\maxhomobg$, and $\maxhomobb$
      are depicted in orange, green, and blue, respectively.
      They all coincide below the green line.
    }
    \label{fig:polars}
  \end{figure}
  %
  %
\end{remark}

\section{Non-homogeneous quadratics} \label{sec:nonhomo}

As discussed at the beginning of the previous section, we now study a general
non-homogeneous quadratic which can be written as
\begin{align*}
  \snonhomo = \{ (x,y) \in \mathbb{R}^{n+m} \st \|x\| \leq \|y\|,\, a^\T x + d^\T y = -1 \}.
\end{align*}
We assume we are given $(\bar x, \bar y)$ such that
\[ \|\bar x\| > \| \bar y\|,\, a^\T \bar x + d^\T \bar y = -1. \]
Much like in \Cref{sec:homowithhomo}, we begin by dismissing a simple case.

\begin{remark}
  The case $\|a\|\leq \|d\| \, \wedge \, m=1$ can be treated separately. Note that, as opposed to the analogous analysis at the beginning of \Cref{sec:homowithhomo}, here we include the case where the norms are equal.
As already noted in \Cref{rmk:less_cases_in_non_homo}, we should expect
$\snonhomo$ to be convex in this case.
Indeed, as $d\neq 0$ (if not, then $a = 0$ and $\snonhomo = \emptyset$) we can
write $y = \frac{1}{d} (-1 - a^\T x ) $ and consequently
\begin{align*}
\snonhomo &=\{ (x,y) \in \mathbb{R}^{n+1} \st \|x\|^2 \leq \frac{1}{d^2}(1 + 2a^\T x  + (a^\T x)^2 ), a^\T x + d^\T y = -1\}\\
& =\{ (x,y) \in \mathbb{R}^{n+1} \st x^\T \left(I - \frac{1}{d^2}aa^\T \right) x  - \frac{1}{d^2}(1 + 2a^\T x ) \leq 0, a^\T x + d^\T y = -1\}.
\end{align*}
Since $I - \frac{1}{d^2}aa^\T$ is positive semi-definite whenever $|d| \geq \|
a\|$, the set $\snonhomo$ is convex.
Thus, a maximal $\snonhomo$-free set, or even directly a cutting plane, can be
obtained using a supporting hyperplane.
\end{remark}

Similarly to \Cref{sec:homowithhomo}, we distinguish the following cases:
\begin{description}
  \item[Case 1] $\|a\| \leq \|d\|\, \wedge \, m > 1$.
  \item[Case 2] $\|a\| > \|d\|$.
\end{description}

Since $\snonhomo \subsetneq \shomob$, then $\maxhomobg$ ($\maxhomobb$) is
$\snonhomo$-free in Case 1 (Case 2) as per \Cref{sec:homowithhomo}.
It is natural to wonder whether these sets are maximal already.
\subsection{Case 1: $\|a\| \leq \|d\|\, \wedge \, m > 1$}
\label{subsec:easycasenonhomo}

The technique we used to prove maximality of $\maxhomobg$ with respect to $\shomob$ is to exploit that $\maxhomobg$ is defined by the inequalities of $\maxhomo$ exposed by elements in $\shomob$.
Following this approach, we study which inequalities of $\maxhomobg$ are exposed by
a point of $\snonhomo$.
Recall that
\[
  \maxhomobg = \{ (x,y) \in \mathbb{R}^{n+m} \st - \lambda^\T x + \beta^\T
  y \leq 0,\ \forall \beta \in G(\lambda)\},
\]
where
\[
  G(\lambda) = \{ \beta \in \mathbb{R}^m \st \|\beta\| = 1,\ a^\T \lambda + d^\T
  \beta \leq 0\}.
\]
Consider an inequality in the definition of $\maxhomobg$ given by $(-\lambda, \beta)$ such that $a^\T \lambda + d^\T \beta
< 0$.
Then, the point $(\lambda, \beta) \in \shomob$ can be scaled by $\mu
= \frac{-1}{a^\T \lambda + d^\T \beta}$ to the exposing point $\mu(\lambda,
\beta) \in \snonhomo$.
Thus, almost every inequality describing $\maxhomobg$ is exposed by points of
$\snonhomo$.
Furthermore, we can simply remove the inequalities that are not exposed by points of
$\snonhomo$ from $\maxhomobg$ without changing the set $\maxhomobg$. We specify this next.

\begin{theorem}\label{thm:max_nonhomo_good}
  Let $\lambda = \frac{\xlp}{\|\xlp\|}$, 
  \[
    \HH = \{ (x,y) \in \mathbb{R}^{n+m} \st a^\T x + d^\T y = -1 \}
  \]
  and
  \[
    \shomob = \{ (x,y) \mathbb{R}^{n+m} \st \|x\| \leq \|y\|,\ a^\T x + d^\T
    y \leq 0 \},
  \]
  where $\|a\| \leq \|d\|\, \wedge \, m > 1$.
  Then, $\maxhomobg$ is maximal $\shomob$-free with respect to $\HH$ and contains $(\xlp,\ylp)$ in its interior.
\end{theorem}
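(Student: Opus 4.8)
The plan is to invoke \Cref{thm:exposed_maximality_wrt} with $S=\shomob$, $H=\HH$, and $C=\maxhomobg$. Two ingredients come essentially for free. First, $\shomob$-freeness of $\maxhomobg$ and the interior membership of $(\xlp,\ylp)$ are exactly \Cref{prop:max_homo_good}, whose hypotheses $\|a\|\le\|d\|$, $m>1$, $\lambda=\xlp/\|\xlp\|$, $(\xlp,\ylp)\notin\shomob$ are all in force here (the last because $\|\xlp\|>\|\ylp\|$). Second, since $\snonhomo=\shomob\cap\HH\subseteq\shomob$ and $(\xlp,\ylp)\in\inte(\maxhomobg)\cap\HH$, the set $\maxhomobg$ is also $\shomob$-free with respect to $\HH$ and still contains $(\xlp,\ylp)$ in its interior (one uses that a hyperplane meeting the interior of a full-dimensional convex set cuts its interior in the relative interior of the slice). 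So the whole content is to supply, for a suitable outer description of $\maxhomobg$, an exposing point in $\snonhomo\cap\maxhomobg$ for every defining inequality.

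Recall $\maxhomobg=\{(x,y):-\lambda^\T x+\beta^\T y\le 0,\ \forall\beta\in G(\lambda)\}$ with $G(\lambda)=\{\beta:\|\beta\|=1,\ a^\T\lambda+d^\T\beta\le 0\}$. Write $G^<(\lambda)=\{\beta\in G(\lambda):a^\T\lambda+d^\T\beta<0\}$ for the open spherical cap. The first step is to show that the ``rim'' inequalities are redundant, i.e.
\[
  \maxhomobg=\{(x,y):-\lambda^\T x+\beta^\T y\le 0,\ \forall\beta\in G^<(\lambda)\}.
\]
This reduces to $\sup_{\beta\in G(\lambda)}\beta^\T y=\sup_{\beta\in G^<(\lambda)}\beta^\T y$ for all $y$, which holds by continuity as soon as $G^<(\lambda)$ is dense in $G(\lambda)$ --- and a spherical cap is the closure of its open part unless it is a single point. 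One therefore has to rule out the one-point cap: that would force $a^\T\lambda=\|d\|$, hence (Cauchy--Schwarz) $\|a\|=\|d\|$ and $\lambda=a/\|a\|$ when $a\neq 0$, which is incompatible with $a^\T\xlp+d^\T\ylp=-1$ and $\|\xlp\|>\|\ylp\|$ (cf.\ \Cref{rmk:lambda_eq_a}); the cases $a=0,\ d\neq 0$ and $a=d=0$ are immediate (in the latter $\snonhomo=\emptyset$). Also $G(\lambda)\neq\emptyset$ by \Cref{prop:d_equal_0}.

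The second step is the construction of the exposing points. For $\beta\in G^<(\lambda)$ put $\mu_\beta=\tfrac{-1}{a^\T\lambda+d^\T\beta}>0$ and $q_\beta=\mu_\beta(\lambda,\beta)$. Then $q_\beta\in\snonhomo$: the linear equation holds by the choice of $\mu_\beta$, and $\|\mu_\beta\lambda\|=\mu_\beta=\|\mu_\beta\beta\|$; and $q_\beta\in\maxhomobg$ since $-\lambda^\T(\mu_\beta\lambda)+\beta'^\T(\mu_\beta\beta)=\mu_\beta(\beta'^\T\beta-1)\le 0$ for all $\beta'\in G(\lambda)$. It remains to check that $q_\beta$ exposes $(-\lambda,\beta)$ with respect to $\maxhomobg$. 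Applying \Cref{lemma:exposing_ineq} with $\phi=\|\cdot\|$ (exactly as in the proof of \Cref{thm:max_homo}), $(\lambda,\beta)$ exposes $(-\lambda,\beta)$ with respect to $\maxhomo$; since $\maxhomo\subseteq\maxhomobg$, every valid inequality for $\maxhomobg$ is valid for $\maxhomo$, so any valid inequality for $\maxhomobg$ tight at $(\lambda,\beta)\in\maxhomobg$ is a positive multiple of $(-\lambda,\beta)$ --- i.e.\ $(\lambda,\beta)$ already exposes $(-\lambda,\beta)$ with respect to the larger set $\maxhomobg$ --- and because $\maxhomobg$ is a cone and $\mu_\beta>0$, this property passes from $(\lambda,\beta)$ to $q_\beta$. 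Then \Cref{thm:exposed_maximality_wrt} applied with $\Gamma=\{(-\lambda,\beta):\beta\in G^<(\lambda)\}$ yields maximality with respect to $\HH$.

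The step I expect to be the real obstacle is the first one: justifying that deleting the rim inequalities does not enlarge $\maxhomobg$, which hinges on excluding the degenerate alignment of $\lambda$ with $a$ (the one-point-cap case), and then being careful that the exposing property indeed transfers \emph{upward} to the strictly larger cone $\maxhomobg$ --- the latter is conceptually light but is where the argument could silently go wrong.
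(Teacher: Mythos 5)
Your proposal is correct and follows essentially the same route as the paper: describe $\maxhomobg$ using only the open cap $\{\beta\in D_1(0)\st a^\T\lambda+d^\T\beta<0\}$, scale each exposing point $(\lambda,\beta)$ by $\mu_\beta=-1/(a^\T\lambda+d^\T\beta)>0$ into $\shomob\cap\HH\cap\maxhomobg$, and invoke \Cref{thm:exposed_maximality_wrt}. You are somewhat more explicit than the paper on two points it leaves implicit --- ruling out the degenerate single-point cap so that the rim inequalities are redundant, and the upward transfer of the exposing property from $\maxhomo$ to $\maxhomobg$ together with its invariance under positive scaling along the cone --- but these are refinements of the same argument, not a different one.
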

\begin{proof}
  By \Cref{prop:max_homo_good}, we know that $\maxhomobg$ is maximal $\shomob$-free.
  Thus, $\maxhomobg$ is $\shomob$-free with respect to $\HH$. To prove maximality, we note that thanks to $m>1$:
  \[
    \maxhomobg = \{ (x,y) \in \mathbb{R}^{n+m} \st -\lambda^\T x + \beta^\T
    y \leq 0,\ \forall \beta \in \relinte(G(\lambda)) \},
  \]
  where
  \[
    \relinte(G(\lambda)) = \{ \beta \in \mathbb{R}^m \st \|\beta\| = 1, a^\T
    \lambda + d^\T \beta < 0\}
  \]
  is the relative interior of $G(\lambda)$.
  Consider $\beta_0 \in \relinte(G(\lambda))$.
  As we saw in \Cref{prop:largest_maybe_free}, $(\lambda, \beta_0) \in \maxhomobg
  \cap \shomob$ exposes the inequality $(-\lambda,\beta_0)$.
  As $\maxhomobg \cap \shomob$ is a (non-convex) cone, we have that for any $\mu
  > 0$, $\mu(\lambda, \beta_0) \in \maxhomobg \cap \shomob$ exposes the
  inequality $(-\lambda,\beta_0)$.
  Since $a^\T \lambda + d^\T \beta_0 < 0$, $\mu = -\frac{1}{a^\T \lambda + d^\T
  \beta_0} > 0$ and so
  \begin{equation}\label{eq:exposingpoint}
    -\frac{(\lambda, \beta_0)}{a^\T \lambda + d^\T \beta_0} \in \shomob \cap
    \HH  \cap \maxhomobg,
  \end{equation}
  exposes the inequality $(-\lambda,\beta_0)$.
  The claim now follows from \Cref{thm:exposed_maximality_wrt}.
\end{proof}

The above theorem states that obtaining a maximal $\snonhomo$-free set in this case amounts to simply using the maximal $\shomob$-free set $\maxhomobg$, and then intersecting with $H$. Recall that $\snonhomo = \shomob \cap H$. The next case is considerably different.

\subsection{Case 2: $\|a\| > \|d\|$}

We begin with an important remark regarding an assumption made in the analogous case of the previous section.
\begin{remark}
  Since in this case $\|a\| > 0$, we can, again, assume that $\|a\|=1$. Indeed, we can always rescale the variables $(x,y)$ by $\|a\|$ to obtain such requirement.

  Also note that since $\|d\| < \|a\| = 1$, then $\phifun$ is differentiable in
  $D_1(0)$. See \Cref{prop:phifun_sublinear}.
\end{remark}

Unfortunately, in this case the maximality of $\maxhomobb$ with respect to $\shomob$ does not carry over to $\snonhomo$, as the following example
shows.

\begin{example} \label{ex:simpleexnonhomo}
  We continue with $\shomorunning$ defined in \Cref{ex:simple3D}. In \Cref{fig:simpleexample3dmaximal} we showed how $\maxhomobb$ gives us a maximal $\shomorunning$-free set. If we now consider    
  \[\HH = \{ (x,y) \in \mathbb{R}^{n+m} \st a^\T x + d^\T y = -1 \} \]
  with $a = (-1/\sqrt{2},1/\sqrt{2})^\T $ and $d=1/\sqrt{2}$, we do not necessarily obtain that $\maxhomobb \cap H$ is maximal $\shomorunning \cap H$-free. In \Cref{fig:simpleexamplenonhomo} we illustrate this issue.

  \begin{figure}
    \begin{subfigure}{.45\textwidth}
      \centering
      \includegraphics[scale=0.3]{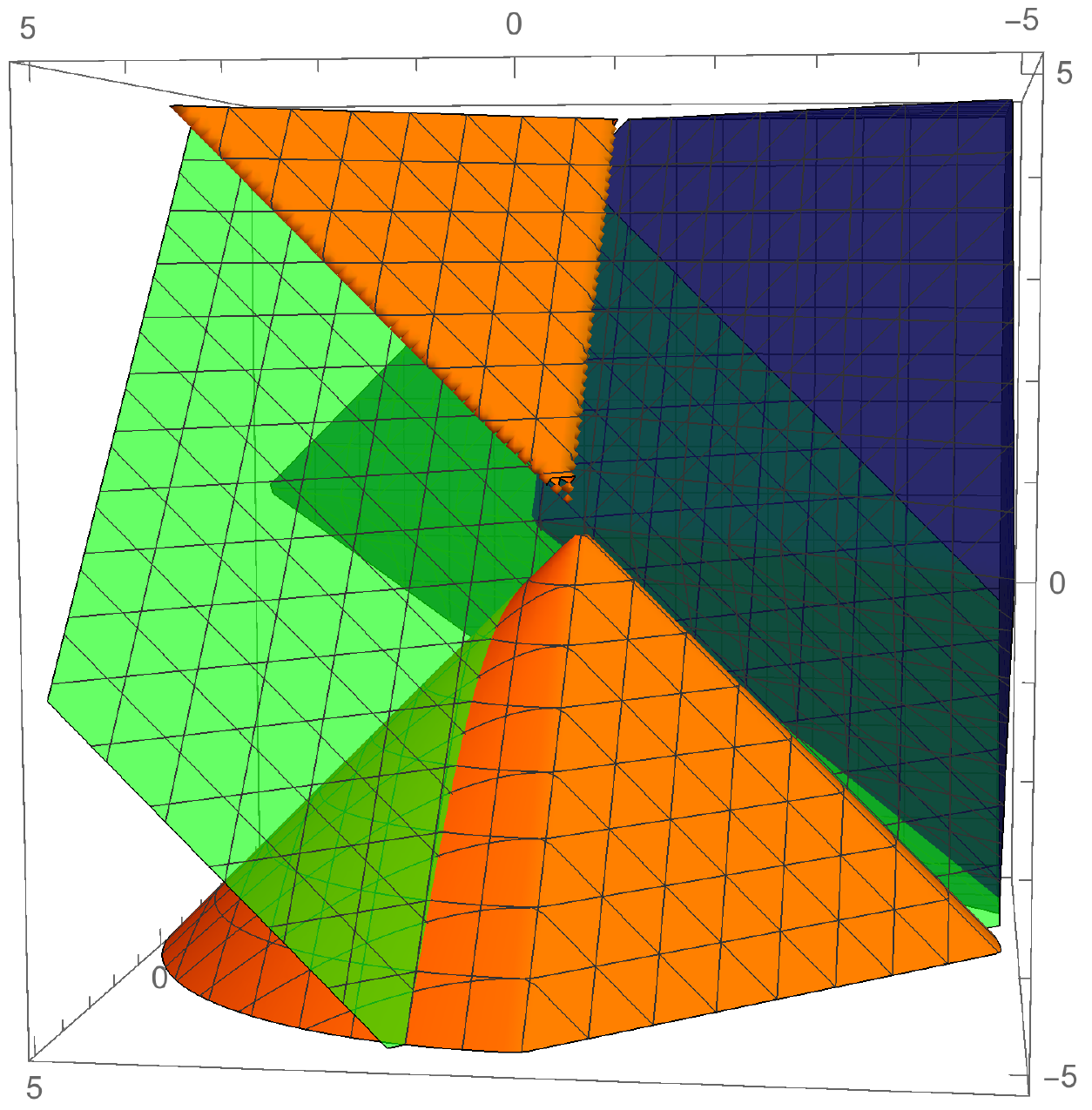}
      \caption{$\shomorunning$ (orange), $H$ (green) and $\maxhomobb$ (blue).}
      \label{fig:sub-simpleexamplenonhomo-3d}
    \end{subfigure}\hspace{.1cm}
    \begin{subfigure}{.45\textwidth}
      \centering
      \includegraphics[scale=0.3]{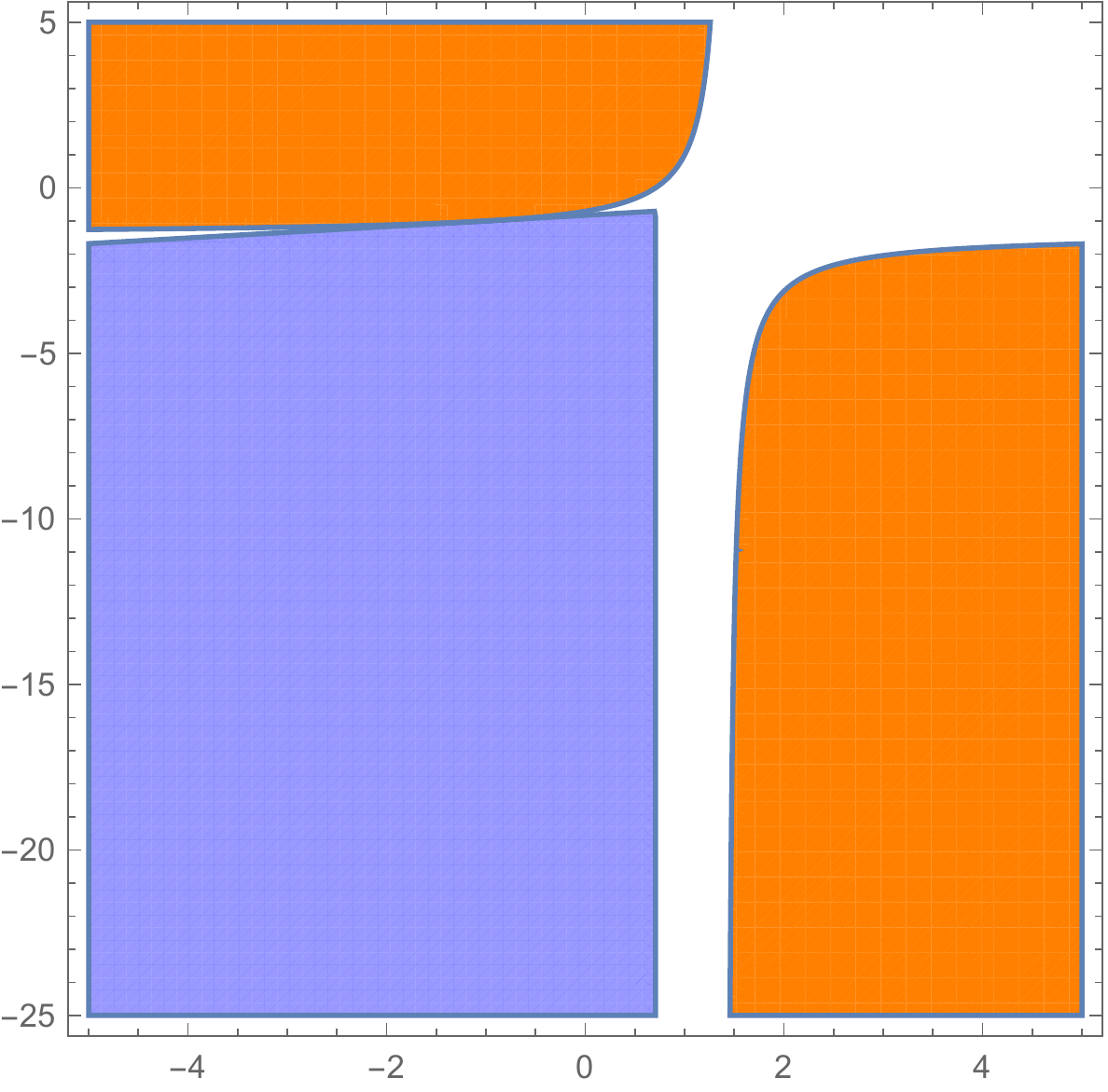}
      \caption{Projection onto $(x_1,x_2)$ of $\shomorunning\cap H$ (orange) and $\maxhomobb \cap H$ (blue). One of the facets of $\maxhomobb \cap H$ has a gap with the boundary of $\shomorunning\cap H$.}
      \label{fig:sub-simpleexamplenonhomo-2d}
    \end{subfigure}
    \caption{Plots of $\shomorunning$, $H$ and $\maxhomobb$ as defined in \Cref{ex:simpleexnonhomo} showing that $\maxhomobb$ is not necessarily maximal $\shomorunning$-free with respect to $H$ in the case $\|a\| > \|d\|$.}
    \label{fig:simpleexamplenonhomo}
  \end{figure}
\end{example}
\Cref{fig:simpleexamplenonhomo} of the previous example displays an interesting feature though: the
inequalities defining $\maxhomobb$ seem to have the correct ``slope'' and just
need to be translated.
We conjecture, then, that in order to find a maximal $\snonhomo$-free set, we
only need to adequately relax the inequalities of $\maxhomobb$.

\subsubsection{Set-up}
Recall that
\begin{align*}
  \maxhomobb &= \{ (x, y) \st \phifun(y) \leq \lambda^\T x \} \\
  &= \{ (x, y) \st -\lambda^\T x + \nabla \phifun(\beta)^\T y \leq 0, \forall \beta
  \in D_1(0) \}. 
\end{align*}
We denote by $r(\beta)$ the amount by which we need to relax each inequality of
$\maxhomobb$ such that
\begin{equation} \label{eq:maxnonhomob}
  \maxnonhomob
  = \{ (x,y) \st -\lambda^\T x + \nabla \phifun(\beta)^\T y \leq r(\beta), \forall
  \beta \in D_1(0) \},
\end{equation}
is $\snonhomo$-free.
Note that when $\beta$ satisfies $\lambda^\T a + d^\T \beta < 0$, the
inequalities of $\maxhomobb$ are the same as the ones of $\maxhomobg$ (see also
Remark~\ref{rmk:whats_going_on}) and, just like in \Cref{subsec:easycasenonhomo},
they have exposing points in $\snonhomo$.
An inequality of this type can be seen in
\Cref{fig:sub-simpleexamplenonhomo-2d}: it is the inequality of $\maxhomobb$ tangent to $S$ at one of its exposing points.
Thus, we expect that $r(\beta) = 0$ when $\lambda^\T a + d^\T \beta < 0$.
In the following we find $r(\beta)$ when $\lambda^\T a + d^\T
\beta \geq 0$ and show maximality of the resulting set. 

Following the spirit of \Cref{subsec:homowithhomohard}, not all statement in this section require $\lambda = \frac{\xlp}{\|\xlp\|}$. However, we assume $\lambda \neq \pm a$. This assumption, however, is not restrictive when constructing maximal $\snonhomo$-free sets, as the following remark shows.

\begin{remark} \label{rmk:lambda_eq_pma}
  If $\lambda = -a$, then for every $\beta \in D_1(0)$ it holds  that $\lambda^\T a + d^\T \beta < 0$. In this case $r(\beta)$ will be simply defined as 0 everywhere and $\maxnonhomob = \maxhomobb$. This means all inequalities defining $\maxnonhomob$ have an exposing point in $\snonhomo$ and maximality follows directly.
  
  On the other hand, if we take $\lambda = \frac{\xlp}{\|\xlp\| }$ with $(\bar{x}, \bar{y}) \in H$ and $\|\bar x\| > \| \bar{y} \|$, we have that if additionally $\lambda = a$
  \begin{align*}
    a^\T \bar{x} + d^\T \bar{y} = -1 &\Longleftrightarrow \|\bar{x}\| + d^\T \bar{y} = -1 \\
    & \Longrightarrow \|\bar{y}\| + d^\T \bar{y} < -1.
  \end{align*}
  The latter cannot be, as $\|d\| < 1$.
\end{remark}
\begin{remark}
  The assumption $\lambda \neq \pm a$ has an unexpected consequence: as $\lambda \neq \pm a$ and $\|a\| = \|\lambda\| = 1$, it must hold that $n \geq 2$. This implicit assumption, however, does not present an issue: whenever $n = 1$ either $\lambda = a$ or $\lambda = -a$. By \Cref{rmk:lambda_eq_pma}, if we use $\lambda = \frac{\xlp}{\|\xlp\| }$, then $\lambda = -a $. Thus, $\maxnonhomob = \maxhomobb$ and maximality holds.
\end{remark}

\subsubsection{Construction of $r(\beta)$}
Let $\beta \in D_1(0)$ be such that $\lambda^\T a + d^\T \beta \geq 0$.
Then, the face of $\maxhomobb$ defined by the valid inequality $-\lambda^\T x + \nabla \phifun(\beta)^\T y \leq 0$ does not intersect $\snonhomo$. See \Cref{lemma:donttouch} for a proof of this statement.

In particular, the inequality is not exposed by any point in $\snonhomo \cap
\maxhomobb$.
However, it is exposed by $(\xbeta, \beta) \in \shomob$, where $\xbeta$ is given
by \eqref{eq:phi_primal_solution} (see the proof of \Cref{prop:max_homo_bad}).
Note that $(\xbeta, \beta) \in \Hzero = \{ (x,y) \st a^\T x + d^\T y = 0\}$, as
otherwise we can scale it so that it belongs to $\snonhomo$.

The quantity $r(\beta)$ is the amount we need to relax the inequality in order
to be an ``asymptote'', and we compute it as follows.
We first find a sequence of points, $(x_n ,y_n)_{n\in \mathbb{N}}$, in $\shomob$ that converge
to $(\xbeta, \beta)$, enforcing that no element of the sequence belongs to $\Hzero$.
If we find such sequence, then every $(x_n, y_n) \in \shomob$ can be scaled to
be in $\snonhomo$:
\[
  z_n = -\frac{(x_n, y_n)}{a^\T x_n + d^\T y_n} \in \snonhomo.
\]
This last scaled sequence diverges, as the denominator goes to 0 due to $(x_n, y_n) \to (\xbeta, \beta) \in \Hzero$. The idea is that the violation $(-\lambda, \nabla
\phifun(\beta) )^\T z_n$ given by this sequence will give us, in the limit, the maximum relaxation that will ensure $\snonhomo$-freeness (see \Cref{fig:sub-simpleexamplenonhomo-2d-points}).
Then, we would define
\[
  r(\beta) = \lim_{n\to \infty}(-\lambda, \nabla
  \phifun(\beta) )^\T z_n = -\lim_{n \to \infty} \frac{-\lambda^\T x_n + \nabla
  \phifun(\beta)^\T y_n }{a^\T x_n + d^\T y_n}.
\]

We remark that this limit is what we intuitively aim for, but it might not even
be well defined in general. In what follows, we construct a sequence that yields
a closed-form expression for the above limit. Additionally, we show that such
definition of $r(\beta)$ yields the desired maximal $\snonhomo$-free set.

\paragraph{The sequence.}
Our goal is to find a sequence $(x_n ,y_n)_n$ such that $(x_n, y_n) \in \shomob$,
$a^\T x_n + d^\T y_n < 0$ and $(x_n, y_n) \to (\xbeta, \beta)$.
We take $y_n = \beta$ and $x_n$ such that $\|x_n\| = \|\beta\| = 1$, $a^\T x_n + d^\T \beta < 0$ and $x_n \to
\xbeta$.
Note that these always exists as $\|a\| = 1$ and $\|d\| < 1$. We illustrate such a sequence with our running example.
\begin{example}\label{ex:simpleexnonhomo-points}
  We continue with  \Cref{ex:simpleexnonhomo}. As we mentioned in \Cref{ex:simple3dmaximalhomo}, in this case
  \[\phifun(y) =  \begin{cases}
     - y ,
      &\text{ if } y \leq 0 \\
      \frac{y}{\sqrt{2}} 
      &\text{ if } y > 0 
  \end{cases} 
  \]
  and since $\lambda = \frac{1}{\sqrt{2}}(-1,-1)^\T$, we see that
  \begin{subequations}
  \begin{align}
  \maxhomobb = \{ (x,y) \st & \frac{1}{\sqrt{2}}(x_1 + x_2) - y \leq 0, \label{eq:ineq1}\\
  & \frac{1}{\sqrt{2}}(x_1 + x_2) + \frac{1}{\sqrt{2}}y \leq 0 \}. \label{eq:ineq2}
  \end{align}

\end{subequations}
It is not hard to check that $-(\tfrac{1}{\sqrt{2}},\tfrac{1}{\sqrt{2}},\sqrt{2}) \in \shomorunning \cap H \cap \maxhomobb$ exposes inequality \eqref{eq:ineq1}. This is the tangent point in \Cref{fig:sub-simpleexamplenonhomo-2d} we discussed above.

On the other hand, \eqref{eq:ineq2}, which is obtained from $\beta=1$, does not have an exposing point in $\shomorunning \cap H \cap \maxhomobb$, and corresponds to an inequality we should relax as per our discussion. This inequality, however, is exposed by $(\xbeta, \beta) =(0,-1, 1) \in \shomorunning \cap \maxhomobb$. Consider now the sequence defined as
\[(x_n,y_n) = \left(\frac{1}{\sqrt{n^2+1}} , -\frac{n}{\sqrt{n^2+1}}, 1\right) \in \shomorunning.\]
Clearly the limit of this sequence is $(0,-1, 1)$ and 
\[a^\T x_n + d^\T y_n = \frac{1}{\sqrt{2}}\left(-\frac{1}{\sqrt{n^2+1}} -\frac{n}{\sqrt{n^2+1}} + 1\right) < 0. \]

Now we let 
\[z_n = -\frac{(x_n, y_n)}{a^\T x_n + d^\T y_n} \in \shomorunning\cap H. \]
As we mention above, this sequence diverges. Continuing with \Cref{fig:simpleexamplenonhomo}, in \Cref{fig:sub-simpleexamplenonhomo-2d-points}, we plot the first two components of the sequence $(z_n)_{n\in \mathbb{N}}$ along with $\shomorunning\cap H$ and $\maxhomobb \cap H$. From this figure we can anticipate where our argument is going: the sequence $(z_n)_{n\in \mathbb{N}}$ moves along the boundary of $\shomorunning \cap H$ towards an ``asymptote'' from where we can deduce $r(\beta)$. The latter is given by the gap between inequality \eqref{eq:ineq2} and the asymptote.
\begin{figure}
  \centering
  \includegraphics[scale=0.3]{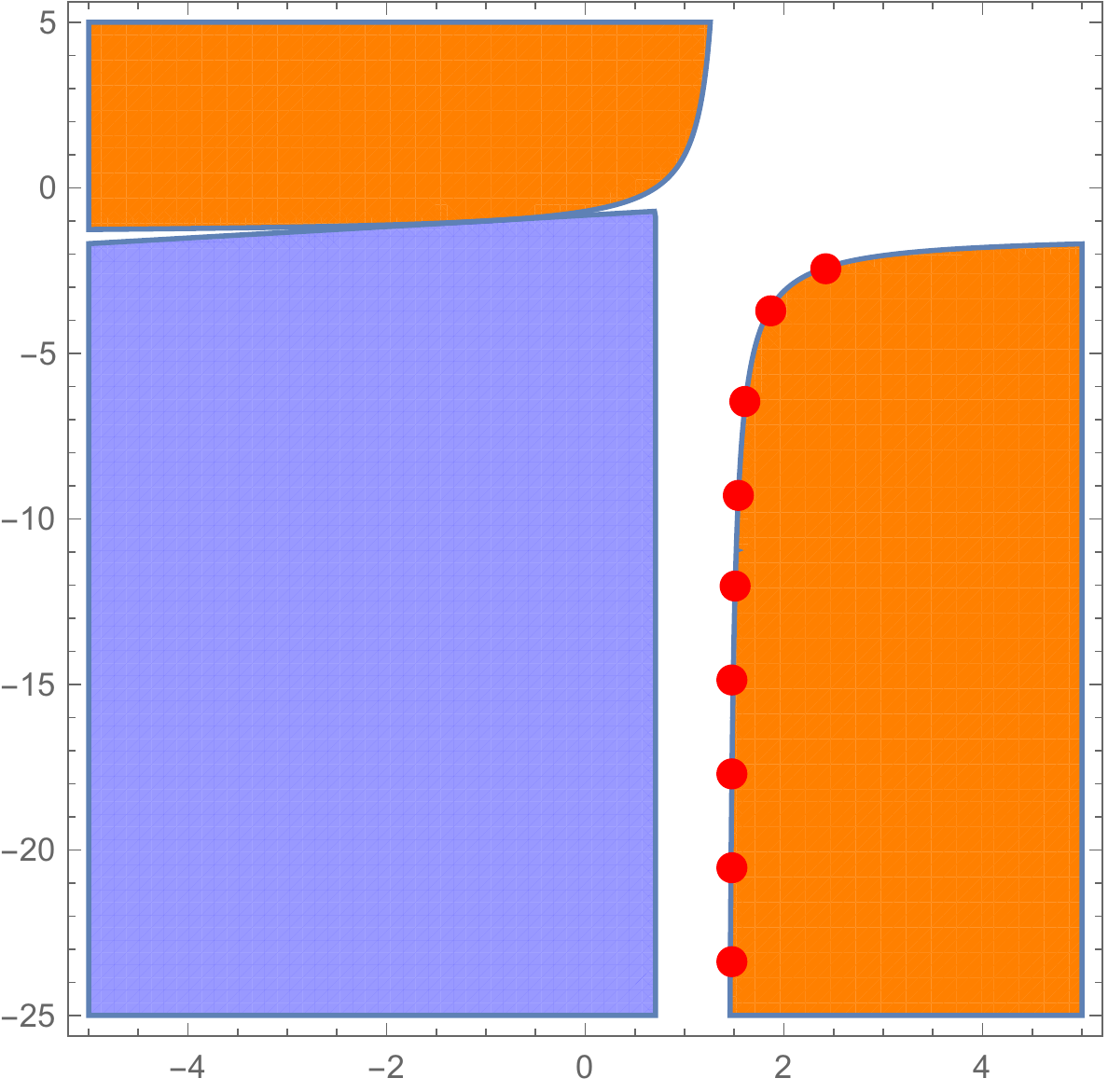}
  \caption{Projection onto $(x_1,x_2)$ of $\shomorunning\cap H$ (orange) and $\maxhomobb$ (blue), along with the first two coordinates of the sequence $(z_n)_{n\in \mathbb{N}}$ defined in \Cref{ex:simpleexnonhomo-points} for several values of $n$ (red). The sequence is diverging ``downwards''.}
  \label{fig:sub-simpleexamplenonhomo-2d-points}
\end{figure}
\end{example}

\paragraph{Computing the limit.}
Here we compute
\[
  r(\beta) = -\lim_{n \to \infty} \frac{-\lambda^\T x_n + \nabla
  \phifun(\beta)^\T y_n }{a^\T x_n + d^\T y_n}.
\]
We proceed to rewrite the limit.

Since $y_n = \beta$ and $\xbeta$ is the optimal solution of
\eqref{eq:phi_function}, we have:
\begin{align*}
  \nabla \phifun(\beta)^\T y_n
  &= \phifun(\beta) = \lambda^\T \xbeta \\
  d^\T y_n
  &= -a^\T \xbeta.
\end{align*}
Thus,
\begin{align*}
  r(\beta)
  &= -\lim_{n \to \infty} \frac{-\lambda^\T x_n + \nabla \phifun(\beta)^\T
  y_n }{a^\T x_n + d^\T y_n} \\
  &= -\lim_{n \to \infty} \frac{-\lambda^\T x_n + \lambda^\T \xbeta}{a^\T x_n
  - a^\T \xbeta}\\
  &= \lim_{n \to \infty} \frac{\lambda^\T (x_n - \xbeta)}{a^\T (x_n
  - \xbeta)}.
\end{align*}
Notice that $\xbeta$ belongs to the 2 dimensional space generated by $\lambda$
and $a$, which we denote by $\Lambda$.
Note that it is indeed 2 dimensional, since $\lambda \neq \pm a$, see
Remark~\ref{rmk:lambda_eq_pma}.
Furthermore, we can assume that $x_n$ also belongs to $\Lambda$ as any other
component of $x_n$ is irrelevant for the value of the limit.
Indeed, as $\mathbb{R}^n = \Lambda \oplus \Lambda^\perp$, then $x_n = x_n^\|
+ x_n^\perp$, where $x_n^\| \in \Lambda$ and $x_n^\perp \in \Lambda^\perp$, and
\[
  \frac{\lambda^\T (x_n - \xbeta)}{a^\T (x_n - \xbeta)}
  =
  \frac{\lambda^\T (x_n^\| - \xbeta)}{a^\T (x_n^\| - \xbeta)}.
\]

To compute the limit observe that
\[
  \frac{\lambda^\T (x_n - \xbeta)}{a^\T (x_n - \xbeta)} = \frac{\lambda^\T
  \frac{x_n - \xbeta}{\|x_n - \xbeta\|}}{a^\T \frac{x_n - \xbeta}{\|x_n
- \xbeta\|}}.
\]
Notice that $\frac{x_n - \xbeta}{\|x_n - \xbeta\|}$ converges, as $x_n \in
\Lambda$, $\|x_n\| = 1$, and $x_n \to \xbeta$.
Let $\hat x$ be the limit and note that $\hat x$ is orthogonal to $\xbeta$.
Indeed,
\begin{align*}
  \xbeta^\T \hat x
  &= \lim_{n \to \infty} \xbeta^\T \frac{x_n - \xbeta}{\|x_n - \xbeta\|}
  \\
  &= \lim_{n \to \infty}  \frac{\xbeta^\T x_n - 1}{\|x_n - \xbeta\|} \\
  &= \lim_{n \to \infty}  -\frac{\|x_n - \xbeta\|^2}{2\|x_n - \xbeta\|} \\
  &= 0.
\end{align*}
Hence,
\[
  r(\beta) = \lim_{n \to \infty} \frac{\lambda^\T (x_n - \xbeta)}{a^\T (x_n
  - \xbeta)}
  = \frac{\lambda^\T \hat x}{a^\T \hat x}.
\]
Since we are interested in the quotient of $\lambda^\T \hat x$ and $a^\T \hat
x$, any multiple of $\hat x$ can be used, that is, any vector orthogonal to
$\xbeta$ in $\Lambda$.
Using $\lambda$ and $a$ as basis for $\Lambda$, we have that for
$x \in \Lambda$ with coordinates $x_\lambda$ and $x_a$, the vector $y$ with
coordinates $y_\lambda = -(x_a + x_\lambda \lambda^\T a)$ and
$y_a = x_\lambda + x_a \lambda^\T a$ is orthogonal to $x$.
Indeed,
\begin{align*}
  x^\T y
  &= (x_\lambda \lambda + x_a a)^\T (y_\lambda \lambda + y_a a) \\
  &= x_\lambda y_\lambda + x_a y_a + (x_\lambda y_a + x_a y_\lambda) \lambda^\T
  a \\
  &= (x_\lambda + x_a \lambda^\T a)y_\lambda + (x_a + x_\lambda \lambda^\T a)
  y_a \\
  &= 0.
\end{align*}
Thus, let $\tilde x = -(\xbeta_a
+ \xbeta_\lambda \lambda^\T a) \lambda + (\xbeta_\lambda + \xbeta_a \lambda^\T
a) a$.
Given that $\lambda^\T a + d^\T \beta \geq 0$, from
\eqref{eq:phi_primal_solution} (appendix) we have
\begin{equation} \label{eq:x_beta}
  \xbeta =
  \sqrt{\frac{1 - (d^\T \beta)^2}{1 - (\lambda^T a)^2}} \lambda
  -\left( d^\T \beta + \lambda^T a \sqrt{\frac{1 - (d^\T \beta)^2}{1
  - (\lambda^T a)^2}} \right) a.
\end{equation}
Note that while this last explicit formula for $\xbeta$ is the one stated for the case $\lambda^\T a + d^\T \beta > 0$, it also holds when $\lambda^\T a + d^\T \beta = 0$.
Therefore,
\begin{align*}
  \tilde x
  &=
  (d^\T \beta) \lambda
  +
  \left(
    \sqrt{\frac{1 - (d^\T \beta)^2}{1 - (\lambda^T a)^2}} -
    \left(
      d^\T \beta + \lambda^T a \sqrt{\frac{1 - (d^\T \beta)^2}{1 - (\lambda^T
      a)^2}}
    \right) \lambda^\T a
  \right)a \\
  &= (d^\T \beta) \lambda + \phifun(\beta) a.
\end{align*}

All together, we obtain
\begin{align*}
  r(\beta)
  = \frac{\lambda^\T \tilde x}{a^\T \tilde x}
  = \frac{d^\T \beta + \lambda^\T a \phifun(\beta)}{\phifun(\beta)
  + d^\T \beta \lambda^\T a}.
\end{align*}
Note that if $\lambda^\T a + d^\T \beta = 0$, then $r(\beta) = 0$. We summarize the above discussion in the following result.

\begin{lemma}\label{lemma:characterizationrbeta}
  Let $a, \lambda, \beta\in D_1(0)$, $d \in B_1(0)$, and $\lambda \neq \pm a$ be
  such that $\|d\| < \|a\|$ and $\lambda^\T a + d^\T \beta \geq 0$.
  Then, every sequence $(x_n)_{n\in \mathbb{N}} \subseteq \langle \lambda,
  a\rangle$ converging to $\xbeta$ such that $\|x_n\| = 1$ and $a^\T x_n + d^\T
  \beta < 0$, satisfies
  \[
    r(\beta) = \lim_{n \to \infty} \frac{\lambda^\T (x_n
      - \xbeta)}{a^\T (x_n - \xbeta)} = \frac{d^\T \beta + \lambda^\T
    a \phifun(\beta)}{\phifun(\beta) + d^\T \beta \lambda^\T a}.
  \]
  Such sequences are always guaranteed to exist.
\end{lemma}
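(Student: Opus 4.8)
The statement is a rigorous repackaging of the computation carried out in the paragraphs immediately above it, and I would organise the proof in three stages: (i) show that admissible sequences exist; (ii) show that for any such sequence the normalised differences $\tfrac{x_n-\xbeta}{\|x_n-\xbeta\|}$ converge to a single unit vector $\hat x$ in the plane $\Lambda=\langle\lambda,a\rangle$ with $\xbeta^\T\hat x=0$ and $a^\T\hat x\neq 0$, so that $\tfrac{\lambda^\T(x_n-\xbeta)}{a^\T(x_n-\xbeta)}\to\tfrac{\lambda^\T\hat x}{a^\T\hat x}$; and (iii) evaluate this ratio in closed form and identify it with $r(\beta)$.

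\textbf{Existence and the limiting direction.} From \eqref{eq:x_beta} --- valid here because $\lambda\neq\pm a$ forces $|\lambda^\T a|<1$, and it also holds when $\lambda^\T a+d^\T\beta=0$ --- one reads off $\|\xbeta\|=1$, $\xbeta\in\Lambda$, and $a^\T\xbeta=-d^\T\beta$ (equivalently, the linear constraint of \eqref{eq:phi_function} is active at the maximiser, since $\lambda^\T a+d^\T\beta\geq0$). Since $|d^\T\beta|\leq\|d\|<\|a\|=1$, the set $\{x\in\Lambda\st\|x\|=1,\ a^\T x<-d^\T\beta\}$ is a nonempty relatively open arc of the unit circle of $\Lambda$ whose closure contains $\xbeta$, so any sequence in it converging to $\xbeta$ is admissible. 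For such a sequence, $\|x_n\|=\|\xbeta\|=1$ gives $\|x_n-\xbeta\|^2=2-2x_n^\T\xbeta$, whence $\xbeta^\T\tfrac{x_n-\xbeta}{\|x_n-\xbeta\|}=-\tfrac12\|x_n-\xbeta\|\to0$. The normalised differences lie on the unit circle of the two-dimensional space $\Lambda$, so every subsequential limit is one of the two unit vectors of $\Lambda$ orthogonal to $\xbeta$; the hypothesis $a^\T(x_n-\xbeta)=a^\T x_n+d^\T\beta<0$ forces $a^\T\hat x\leq0$ on every such limit. Moreover the $\lambda$-coordinate of $\xbeta$ in \eqref{eq:x_beta} is strictly positive, so $\xbeta$ is not a scalar multiple of $a$ and hence $a^\T\hat x\neq0$; thus $a^\T\hat x<0$ pins down the limit uniquely, the whole sequence converges, and dividing numerator and denominator by $\|x_n-\xbeta\|$ yields $\tfrac{\lambda^\T(x_n-\xbeta)}{a^\T(x_n-\xbeta)}\to\tfrac{\lambda^\T\hat x}{a^\T\hat x}$ with nonzero denominator.

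\textbf{Evaluation and identification with $r(\beta)$.} Since the ratio depends only on the direction of $\hat x$, replace $\hat x$ by any nonzero vector of $\Lambda$ orthogonal to $\xbeta$. Expressing vectors of $\Lambda$ in the (non-orthonormal) basis $\{\lambda,a\}$ and using the orthogonality identity verified just above the statement, applied to the coordinates of $\xbeta$ taken from \eqref{eq:x_beta}, produces the explicit vector $\tilde x=(d^\T\beta)\lambda+\phifun(\beta)a$ (the simplification using $\phifun(\beta)=\lambda^\T\xbeta$). Then $\lambda^\T\tilde x=d^\T\beta+\lambda^\T a\,\phifun(\beta)$ and $a^\T\tilde x=\phifun(\beta)+d^\T\beta\,\lambda^\T a$, so the limit equals $\tfrac{d^\T\beta+\lambda^\T a\,\phifun(\beta)}{\phifun(\beta)+d^\T\beta\,\lambda^\T a}$, the denominator being nonzero by the previous stage. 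Finally, to match this with $r(\beta)=-\lim_n\tfrac{-\lambda^\T x_n+\nabla\phifun(\beta)^\T\beta}{a^\T x_n+d^\T\beta}$ (its definition, with $y_n=\beta$), note that $\phifun$ is differentiable at $\beta\neq0$ since $\|d\|<1$ (\Cref{prop:phifun_sublinear}), so Euler's identity gives $\nabla\phifun(\beta)^\T\beta=\phifun(\beta)=\lambda^\T\xbeta$; together with $d^\T\beta=-a^\T\xbeta$ this turns the quotient defining $r(\beta)$ into $-\tfrac{-\lambda^\T(x_n-\xbeta)}{a^\T(x_n-\xbeta)}$, whose limit is exactly the quantity just computed.

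\textbf{Main obstacle.} The only genuinely delicate point is stage (ii): showing that the normalised chord directions converge along the full sequence (not merely subsequentially) and that the limiting denominator $a^\T\hat x$ is nonzero, which is what makes the quotient of one-sided limits well defined. This is precisely where the hypotheses $\|d\|<\|a\|$, $\lambda\neq\pm a$, and the explicit structure of $\xbeta$ from \eqref{eq:x_beta} are used; everything else reduces to bookkeeping in the plane $\Lambda$.
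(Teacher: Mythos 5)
Your proof is correct and follows essentially the same route as the paper: reduce the defining limit to the chord-direction quotient via Euler's identity and $a^\T\xbeta=-d^\T\beta$, identify the limiting direction as the unit tangent to $D_1(0)\cap\langle\lambda,a\rangle$ at $\xbeta$, and evaluate the ratio on the explicit orthogonal vector $\tilde x=(d^\T\beta)\lambda+\phifun(\beta)\,a$. The only difference is that you justify more carefully two points the paper leaves implicit --- that the normalised chords converge along the whole sequence rather than merely subsequentially, and that $a^\T\hat x\neq0$ so the quotient is well defined --- which is a welcome tightening rather than a change of method.
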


Therefore, for $\beta \in D_1(0)$, we define
\begin{align*}
  r(\beta)
  =
  \begin{cases}
    0,
    &\text{ if } \lambda^\T a + d^\T \beta \leq 0 \\
    \frac{d^\T \beta + \lambda^\T a \phifun(\beta)}{\phifun(\beta) 
    + d^\T \beta \lambda^\T a},
    &\text{ otherwise. }
  \end{cases}
\end{align*}
We extend $r$ to $y \in \mathbb{R}^m\setminus \{0\}$ by $r(y)
= r(\tfrac{y}{\|y\|})$ and leave it undefined at $0$.

\begin{example}\label{ex:simpleexnonhomo-maximal}
  We continue with our running example in \Cref{ex:simpleexnonhomo-points}. In
  this case $r(-1) = 0$, and since $\phifun(\beta) = 1/\sqrt{2}$, $ \lambda^\T
  a = 0$ and $d=1/\sqrt{2}$ it can be checked that
  \[r(1) = 1. \]
  Now, let
  \begin{align*}
    C_1 = & \{ (x,y) \st -\lambda^\T x + \nabla \phifun(\beta)^\T y \leq
    r(\beta), \text{ for all } \beta \in D_1(0) \}\\
     = & \{ (x,y) \st \frac{1}{\sqrt{2}}(x_1 + x_2) - y \leq 0, \frac{1}{\sqrt{2}}(x_1 + x_2) + \frac{1}{\sqrt{2}}y \leq 1 \}. 
  \end{align*}

  \Cref{fig:simpleexamplenonhomo-maximal} shows the same plots as
  \Cref{fig:simpleexamplenonhomo} with $C_1$ instead of $\maxhomobb$.

\begin{figure}
  \begin{subfigure}{.45\textwidth}
    \centering
    \includegraphics[scale=0.3]{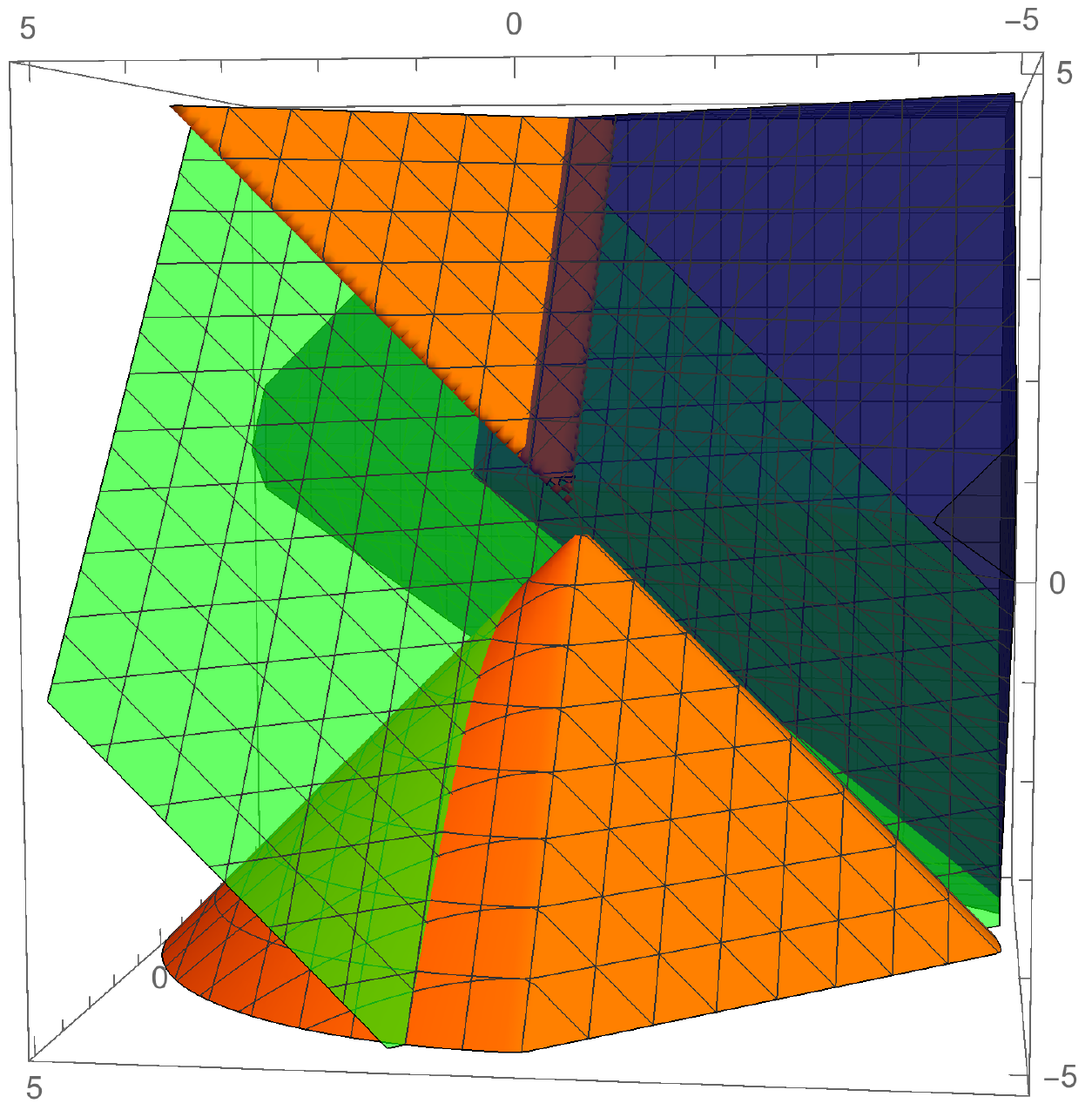}
    \caption{$\shomorunning$ (orange), $H$ (green) and $C_1$ (blue). In this case $C_1$ is no longer $\shomorunning$-free.}
    \label{fig:sub-simpleexamplenonhomo-3d-maximal}
  \end{subfigure}\hspace{.1cm}
  \begin{subfigure}{.45\textwidth}
    \centering
    \includegraphics[scale=0.3]{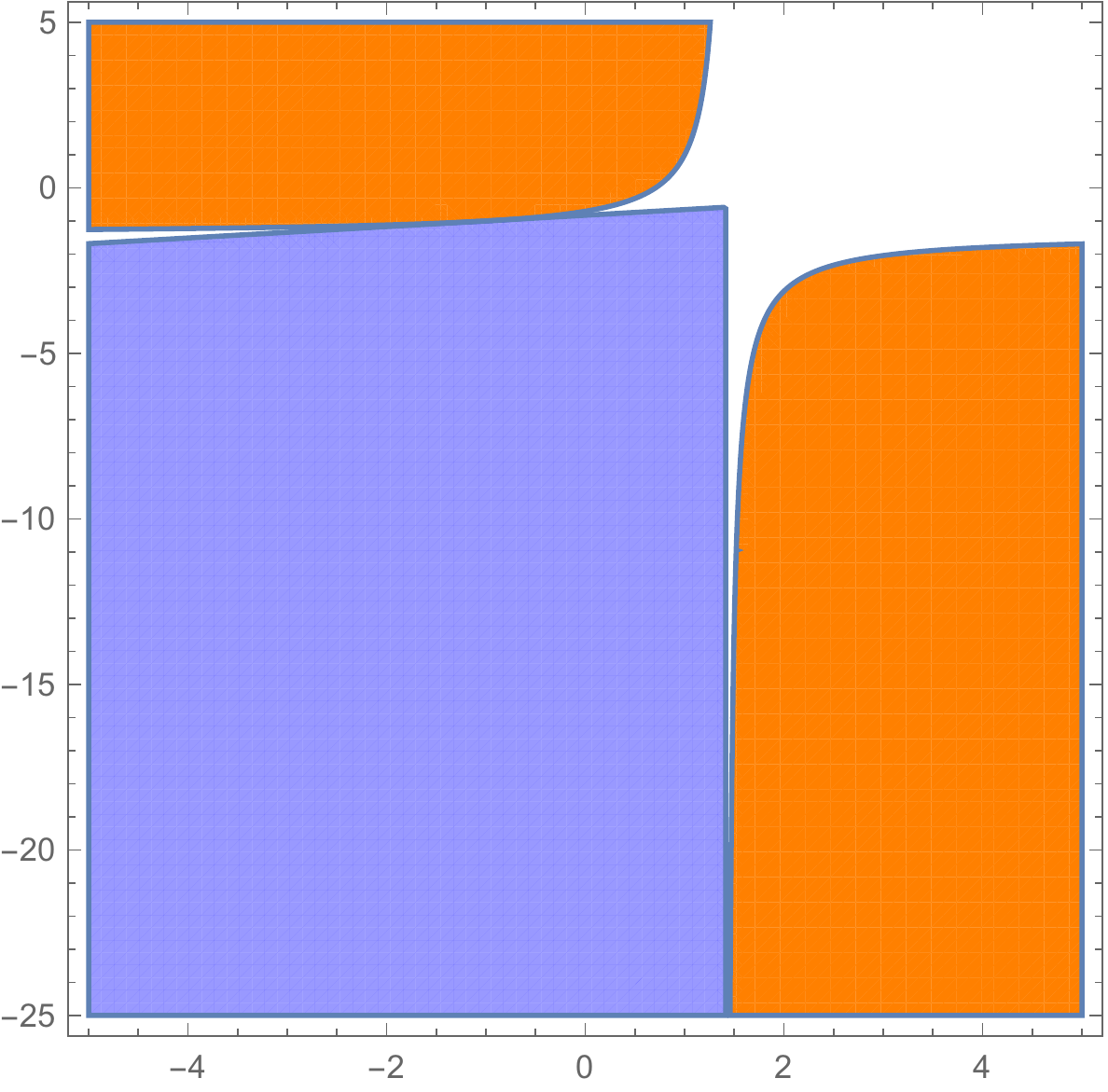}
    \caption{Projection onto $(x_1,x_2)$ of  $\shomorunning\cap H$ (orange) and $C_1 \cap H$ (blue).}
    \label{fig:sub-simpleexamplenonhomo-2d-maximal}
  \end{subfigure}
  \caption{Plots of $\shomorunning$, $H$ and $C_1$ as defined in \Cref{ex:simpleexnonhomo-maximal} showing that $C_1$ is maximal $\shomorunning$-free with respect to $H$.}
  \label{fig:simpleexamplenonhomo-maximal}
\end{figure}
\end{example}

As we see below, the characterization of $r$ as a limit is going to be useful to
prove maximality of $\maxnonhomob$.
However, to show that $\maxnonhomob$ is free, we need a different
interpretation of $r$.

\begin{lemma} \label{lemma:r_is_dual_sol}
  For every $\beta \in D_1(0)$, $r(\beta) = \theta(\beta)$, where
  $\theta(\beta)$ is defined in \eqref{eq:phi_dual_solution} and corresponds to the optimal dual solution of the optimization problem defining $\phifun(\beta)$.
\end{lemma}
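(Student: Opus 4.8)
The plan is to produce matching closed-form expressions for $r(\beta)$ and $\theta(\beta)$ and to reduce the equality to one algebraic identity satisfied by $\phifun(\beta)$. Recall that $\phifun(\beta)$ is the optimal value of the convex program $\max_x\{\lambda^\T x : \|x\| \leq \|\beta\|,\ a^\T x + d^\T \beta \leq 0\}$; since $\|\beta\| = 1$ and $\|d\| \leq \|a\| = 1$, a strictly feasible point exists (e.g.\ $x = -t a$ with $d^\T\beta < t < 1$, possible because $d^\T\beta \leq \|d\| < 1$), so Slater's condition holds, strong duality is available, and $\theta(\beta)$ from \eqref{eq:phi_dual_solution} is the optimal multiplier attached to the constraint $a^\T x + d^\T \beta \leq 0$. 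I would split the argument according to the sign of $\lambda^\T a + d^\T \beta$, exactly as $r$ is defined.

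If $\lambda^\T a + d^\T \beta \leq 0$, the point $x = \lambda$ is feasible (it has norm $1 = \|\beta\|$ and satisfies $a^\T\lambda + d^\T\beta = \lambda^\T a + d^\T\beta \leq 0$) and attains the Cauchy--Schwarz bound $\lambda^\T x \leq \|x\| \leq 1$, hence is optimal; the linear constraint is then non-binding, so $\theta(\beta) = 0 = r(\beta)$ (this also follows directly from \eqref{eq:phi_dual_solution}).

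If $\lambda^\T a + d^\T \beta > 0$, I would feed in the explicit description of $\phifun$ from \Cref{prop:phifun_sublinear}: with $\|\beta\| = 1$ the relevant branch of \eqref{eq:phi_characterization} reads $\phifun(\beta) = \sqrt{(1 - (d^\T\beta)^2)(1 - (\lambda^\T a)^2)} - (d^\T\beta)(\lambda^\T a)$, equivalently $\phifun(\beta) + (d^\T\beta)(\lambda^\T a) = \sqrt{(1-(d^\T\beta)^2)(1-(\lambda^\T a)^2)}$. Substituting this into $r(\beta) = \frac{d^\T\beta + (\lambda^\T a)\phifun(\beta)}{\phifun(\beta) + (d^\T\beta)(\lambda^\T a)}$ and simplifying the numerator collapses the quotient to $r(\beta) = \lambda^\T a + \frac{(d^\T\beta)\sqrt{1-(\lambda^\T a)^2}}{\sqrt{1-(d^\T\beta)^2}}$, which is exactly the expression \eqref{eq:phi_dual_solution} for $\theta(\beta)$. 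Alternatively, and bypassing the closed form of $\theta(\beta)$, one can argue purely from KKT: at the (unique) optimal primal point $\xbeta$ of \eqref{eq:x_beta}, stationarity gives $\lambda = \mu\,\xbeta + \theta(\beta)\, a$ for some $\mu \geq 0$; taking inner products with $a$ and with $\xbeta$ and using $\|a\| = \|\xbeta\| = 1$, $a^\T\xbeta = -d^\T\beta$, $\lambda^\T\xbeta = \phifun(\beta)$ yields a $2\times 2$ linear system whose solution for $\theta(\beta)$ is $\frac{\lambda^\T a + (d^\T\beta)\phifun(\beta)}{1 - (d^\T\beta)^2}$, and this equals $r(\beta)$ by the same identity $(\phifun(\beta) + (\lambda^\T a)(d^\T\beta))^2 = (1-(\lambda^\T a)^2)(1-(d^\T\beta)^2)$.

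The step needing the most care is simply the bookkeeping in the second case: keeping the two branches of $\phifun$ apart, respecting the normalization $\|a\| = 1$ and the standing assumption $\lambda \neq \pm a$ (so that the two-dimensional reasoning behind \eqref{eq:x_beta} is legitimate and $\xbeta, a$ are linearly independent, making $\mu,\theta(\beta)$ unique), and checking that the sign conventions in \eqref{eq:phi_dual_solution} match the root of the quadratic coming out of stationarity. There is no conceptual obstacle: the substance of the lemma is that the limit-based definition of $r(\beta)$ from \Cref{lemma:characterizationrbeta} and the dual-optimal $\theta(\beta)$ are the same number, and the work is just to bring both to the same explicit form.
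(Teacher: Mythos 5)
Your proposal is correct and follows essentially the same route as the paper: the trivial case $\lambda^\T a + d^\T\beta \leq 0$ gives $r(\beta)=0=\theta(\beta)$, and otherwise one substitutes the closed form of $\phifun(\beta)$ from \eqref{eq:phi_characterization}, uses the identity $\phifun(\beta) + (d^\T\beta)(\lambda^\T a) = \sqrt{(1-(d^\T\beta)^2)(1-(\lambda^\T a)^2)}$ to simplify the quotient, and recovers exactly \eqref{eq:phi_dual_solution}. The additional KKT-based derivation you sketch is a valid (and correct) alternative, but the core argument is the same algebraic verification the paper performs.
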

\begin{proof}
  If $\lambda^\T a + d^\T \beta \leq 0$, $r(\beta) = 0 = \theta(\beta)$.
  Let $\beta \in D_1(0)$ be such that $\lambda^\T a + d^\T \beta > 0$.
  Then,
  \begin{align*}
    r(\beta)
    &= \frac{d^\T \beta + \lambda^\T a \phifun(\beta)}{\phifun(\beta) + d^\T
    \beta \lambda^\T a} \\
    &= \frac{d^\T \beta + \lambda^\T a \sqrt{1 - (\lambda^\T a)^2} \sqrt{1
    - (d^\T \beta)^2} - d^\T \beta (\lambda^\T a)^2}{\sqrt{1 - (\lambda^\T a)^2}
    \sqrt{1 - (d^\T \beta)^2}} \\
    &= \frac{d^\T \beta \sqrt{1 - (\lambda^\T a)^2}}{\sqrt{1 - (d^\T \beta)^2}}
    + \lambda^\T a \\
    &= \theta(\beta).
  \end{align*}
\end{proof}

\subsubsection{$\snonhomo$-freeness and maximality proofs}
We now show that $\maxnonhomob$ is $\snonhomo$-free and then that it is maximal.
\begin{theorem} \label{thm:free_nonhomo_bad}
  Let  $\lambda \in D_1(0)$ such that $\lambda \neq \pm a$, 
  \[
    \maxnonhomob = \{ (x,y) \st -\lambda^\T x + \nabla \phifun(\beta)^\T y \leq
    r(\beta), \text{ for all } \beta, \|\beta\| = 1 \}.
  \]
  and $\snonhomo = \{(x,y) \st \|x\| \leq \|y\|, a^\T x + d^\T y = -1 \}$, with $\|d\|  < \|a\| = 1$.
  Then, $\maxnonhomob$ is $\snonhomo$-free.
\end{theorem}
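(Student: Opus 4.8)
The plan is to show directly that no point of $\snonhomo$ lies in $\inte(\maxnonhomob)$; that is, for every $(x,y)\in\snonhomo$ I will exhibit one index $\beta$ for which the defining inequality $-\lambda^\T x+\nabla\phifun(\beta)^\T y\le r(\beta)$ fails to hold strictly. First I would observe that any $(x,y)\in\snonhomo$ has $y\neq 0$: otherwise $\|x\|\le\|y\|$ forces $x=0$, contradicting $a^\T x+d^\T y=-1$. So I set $\beta^\ast=y/\|y\|\in D_1(0)$, at which $\phifun$ is differentiable because $\|d\|<\|a\|=1$ (\Cref{prop:phifun_sublinear}); this $\beta^\ast$ will be the offending index. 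The naive bound is not enough here: since $\snonhomo\subseteq\shomob$ and $\maxhomobb$ is $\shomob$-free we only get $\phifun(y)\ge\lambda^\T x$, i.e. $-\lambda^\T x+\nabla\phifun(\beta^\ast)^\T y\ge 0$, whereas $r(\beta^\ast)$ may be strictly positive. The extra slack must come from using the \emph{equality} $a^\T x+d^\T y=-1$, not merely $a^\T x+d^\T y\le 0$.

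The tool for this is the minimax (strong-duality) description of $\phifun$, already used implicitly in \Cref{sec:homowithhomo}. Dualizing the linear constraint in \eqref{eq:phi_function} and using that Slater's condition holds for $y\neq 0$ (the point $x=-\|y\|a$ is strictly feasible, since $a^\T x+d^\T y=-\|y\|+d^\T y<0$ as $\|d\|<1$), one obtains
\[
  \phifun(y)=\min_{\theta\ge 0}\bigl(\|y\|\,\|\lambda-\theta a\|-\theta\, d^\T y\bigr),
\]
with the minimum attained. By \Cref{lemma:r_is_dual_sol} the minimizer is exactly $\theta=r(\beta^\ast)=r(y)$; this identification of $r(\beta)$ with the dual multiplier $\theta(\beta)$ is the crucial bridge.

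Now fix $\theta:=r(\beta^\ast)\ge 0$. By Cauchy--Schwarz and $\|x\|\le\|y\|$,
\[
  \lambda^\T x=(\lambda-\theta a)^\T x+\theta\, a^\T x\le \|\lambda-\theta a\|\,\|y\|+\theta\, a^\T x,
\]
and substituting $a^\T x=-1-d^\T y$ (this is where the affine equality enters) gives
\[
  \lambda^\T x\le \|\lambda-\theta a\|\,\|y\|-\theta\, d^\T y-\theta=\phifun(y)-r(\beta^\ast),
\]
where the last equality uses that $\theta=r(\beta^\ast)$ is the dual optimizer. Finally, since $\phifun$ is sublinear and differentiable at $y\neq 0$, Euler's identity yields $\phifun(y)=\nabla\phifun(y)^\T y=\nabla\phifun(\beta^\ast)^\T y$ (the gradient of a positively $1$-homogeneous function is $0$-homogeneous). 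Hence $-\lambda^\T x+\nabla\phifun(\beta^\ast)^\T y\ge r(\beta^\ast)$, so $(x,y)$ is not in the interior of the half-space indexed by $\beta^\ast$, and therefore $(x,y)\notin\inte(\maxnonhomob)$. Since $(x,y)\in\snonhomo$ was arbitrary, $\maxnonhomob$ is $\snonhomo$-free.

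The routine ingredients are Cauchy--Schwarz, the substitution of the affine equation, and the homogeneity bookkeeping $\phifun(y)=\nabla\phifun(\beta^\ast)^\T y$. The real obstacle is conceptual rather than computational: recognizing that $\shomob$-freeness of $\maxhomobb$ is too weak, and that the precise amount $r(\beta)$ by which the inequalities were relaxed had to coincide with the dual multiplier $\theta(\beta)$ of \eqref{eq:phi_function} for the substitution of $a^\T x+d^\T y=-1$ to exactly close the gap — which is precisely what \Cref{lemma:r_is_dual_sol} guarantees.
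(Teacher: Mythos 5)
Your proof is correct and follows essentially the same route as the paper's: both bound $\lambda^\T x$ by the weak-duality/Lagrangian expression $\|\lambda-\theta a\|\,\|y\|+\theta\, a^\T x$, substitute the affine equality $a^\T x=-1-d^\T y$, evaluate at the dual optimizer $\theta(y)$, and use \Cref{lemma:r_is_dual_sol} together with the strong duality of \Cref{prop:phi} to conclude $\lambda^\T x\le\phifun(y)-r(\beta)$. (Your parenthetical Slater check is both unnecessary --- strong duality is already supplied by \Cref{prop:phi} --- and slightly off, since $x=-\|y\|a$ is not strictly feasible for the norm constraint; this does not affect the argument.)
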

\begin{proof}
  Let $(x_0,y_0) \in \snonhomo$ and let $\beta_0 = \frac{y_0}{\|y_0\|}$.
  The claim will follow if we are able to show that
  $-\lambda^\T x_0 + \nabla \phifun(\beta_0)^\T y_0 \geq r(\beta_0)$.

  Since $x_0$ satisfies $\|x_0\| \leq \|y_0\|$ and $a^\T x_0 + d^\T y_0 = -1$, it
  follows that
  \[
    \lambda^\T x_0 \leq \max_x \{ \lambda^\T x \st \|x\| \leq \|y_0\|, a^\T x
    + d^\T y_0 \leq -1 \}.
  \]
  By weak duality we have
  \[
    \max_x \{ \lambda^\T x \st \|x\| \leq \|y_0\|, a^\T x + d^\T y_0 \leq -1 \}
    \leq \inf_{\theta \geq 0} \|y_0\| \|\lambda - a \theta\| - (d^\T y_0 + 1)\theta.
  \]
  Recall that $\theta(y_0)$ is the optimal dual solution to the optimization problem defining $\phifun(y_0)$. Thus, it holds that $\theta(y_0) \in \mathbb{R}_+$ and $\theta(y_0) < +\infty$
  because $\|d\| < 1$. Consequently,
  \[
    \inf_{\theta \geq 0} \|y_0\| \|\lambda - a \theta\| - (d^\T y_0 + 1)\theta
    \leq \|y_0\| \|\lambda - a \theta(y_0)\| - (d^\T y_0 + 1)\theta(y_0)
    = \phifun(y_0) - \theta(y_0),
  \]
  where the last equality follows from the strong duality between the optimization problem that defines $\phifun$ and its dual. See \Cref{prop:phi}.
  All the inequalities together show that
  \[
    \lambda^\T x_0 \leq \phifun(y_0) - \theta(y_0).
  \]
  From \eqref{eq:phi_dual_solution} and \Cref{lemma:r_is_dual_sol} it follow
  $\theta(y_0) = \theta(\beta_0) = r(\beta_0)$.
  Thus,
  \[
    -\lambda^\T x_0 + \phifun(y_0) \geq r(\beta_0),
  \]
  as we wanted to establish.
\end{proof}
\begin{theorem} \label{thm:max_nonhomo_bad}
  Let $\lambda \in D_1(0)$ such that $\lambda \neq \pm a$, 
  \[
    \HH = \{ (x,y) \in \mathbb{R}^{n+m} \st a^\T x + d^\T y = -1 \},
  \]
  \[
    \shomob = \{ (x,y) \mathbb{R}^{n+m} \st \|x\| \leq \|y\|,\ a^\T x + d^\T
    y \leq 0 \},
  \]
  and
  \[
    \maxnonhomob = \{ (x,y) \st -\lambda^\T x + \nabla \phifun(\beta)^\T y \leq
    r(\beta), \text{ for all } \beta \in D_1(0) \}.
  \]
  where $\|d\| < \|a\| = 1$. Then, $\maxnonhomob$ is maximal $\shomob$-free with respect to $\HH$. 
  
  Additionally, if $\lambda = \frac{\xlp}{\|\xlp\|}$ with $(\bar{x}, \bar{y}) \in H$ and $\|\bar x\| > \| \bar{y} \|$, then $(\xlp,\ylp) \in \inte(\maxnonhomob)$.
\end{theorem}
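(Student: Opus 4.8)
The plan is to apply \Cref{thm:mixed_exposed_maximality_wrt} with $S=\snonhomo$ and hyperplane $\HH$. Since $\snonhomo\subseteq\HH$ and $\snonhomo=\shomob\cap\HH$, the notions ``$S$-free with respect to $\HH$'' and ``maximal $S$-free with respect to $\HH$'' depend only on $S\cap\HH$ and hence agree for $S=\snonhomo$ and $S=\shomob$; so it suffices to show $\maxnonhomob$ is maximal $\snonhomo$-free with respect to $\HH$. That $\maxnonhomob$ is $\snonhomo$-free is \Cref{thm:free_nonhomo_bad}. Since $\|d\|<\|a\|=1$, $\phifun$ is differentiable on $D_1(0)$ (\Cref{prop:phifun_sublinear}), so $\maxnonhomob=\{(x,y)\st -\lambda^\T x+\nabla\phifun(\beta)^\T y\le r(\beta),\ \forall\beta\in D_1(0)\}$, and it remains to certify, for each $\beta\in D_1(0)$, the inequality $(-\lambda,\nabla\phifun(\beta);r(\beta))$ by either an exposing point in $\snonhomo\cap\maxnonhomob$ (recall $\snonhomo\subseteq\HH$) or a sequence in $\snonhomo$ exposing it at infinity. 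Throughout I use: $r\ge 0$, hence $\maxhomobb\subseteq\maxnonhomob$ and $\rec(\maxnonhomob)=\maxhomobb$, a nonempty convex cone; exposing is invariant under positive scaling inside a cone; and if $C'\subseteq C$, $x_0\in C'$ exposes $(\alpha,\beta_0)$ w.r.t.\ $C'$ and $(\alpha,\beta_0)$ is valid for $C$, then $x_0$ exposes $(\alpha,\beta_0)$ w.r.t.\ $C$ (immediate from \Cref{def:expose}).

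\emph{Inequalities with $\lambda^\T a+d^\T\beta<0$.} Here $r(\beta)=0$, $\beta$ is interior to $A_1$ from \eqref{eq:A_primal}, so $\phifun(\beta)=\|\beta\|=1$, $\xbeta=\lambda$ and $\nabla\phifun(\beta)=\beta$; the inequality is $-\lambda^\T x+\beta^\T y\le 0$, a defining inequality of both $\maxnonhomob$ and $\maxhomobb$. Put $\mu=-(a^\T\lambda+d^\T\beta)^{-1}>0$ and $p=\mu(\lambda,\beta)$. Then $p\in\shomob$ (scaling of $(\lambda,\beta)\in\shomob$) and $a^\T(\mu\lambda)+d^\T(\mu\beta)=-1$, so $p\in\snonhomo$; moreover $(\lambda,\beta)\in\maxhomobb$, hence $\nabla\phifun(\beta')^\T\beta\le 1$ for all $\beta'$, so $\mu(-1+\nabla\phifun(\beta')^\T\beta)\le 0\le r(\beta')$, i.e.\ $p\in\maxnonhomob$. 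By \Cref{lemma:exposing_ineq} (with $\phi=\phifun$, differentiable at $\ylp=\beta$), $(\lambda,\beta)$ exposes $-\lambda^\T x+\beta^\T y\le 0$ w.r.t.\ $\maxhomobb$; since $\maxhomobb$ is a cone so does $p=\mu(\lambda,\beta)$; and since the inequality is valid for $\maxnonhomob$, $p$ exposes it w.r.t.\ $\maxnonhomob$. Thus $p\in\snonhomo\cap\maxnonhomob$ is the required exposing point.

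\emph{Inequalities with $\lambda^\T a+d^\T\beta\ge 0$.} Use the sequence $z_n=-(x_n,\beta)/(a^\T x_n+d^\T\beta)$ from \Cref{lemma:characterizationrbeta}, with $x_n\in\langle\lambda,a\rangle$, $\|x_n\|=1$, $a^\T x_n+d^\T\beta<0$ and $x_n\to\xbeta$; it exists and $z_n\in\shomob\cap\HH=\snonhomo$. To verify \Cref{def:expose_at_infinity}: as $a^\T\xbeta+d^\T\beta=0$ we have $z_n=t_n(x_n,\beta)$ with $t_n=-(a^\T(x_n-\xbeta))^{-1}\to+\infty$, so $\|z_n\|\to\infty$ and $z_n/\|z_n\|\to(\xbeta,\beta)/\|(\xbeta,\beta)\|=:d^*\in\maxhomobb=\rec(\maxnonhomob)$ (using $\phifun(\beta)=\lambda^\T\xbeta$); by \Cref{lemma:exposing_ineq} at $\ylp=\beta$, $(\xbeta,\beta)$ exposes $-\lambda^\T x+\nabla\phifun(\beta)^\T y\le 0$ w.r.t.\ $\maxhomobb$, hence so does $d^*$. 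For the last bullet, decompose $z_n=((d^*)^\T z_n)d^*+w_n$ with $w_n\perp d^*$; using $\|x_n\|=\|\xbeta\|=1$ (so $1-\xbeta^\T x_n=\tfrac12\|x_n-\xbeta\|^2$), $\xbeta^\T\hat x=0$ with $\hat x:=\lim(x_n-\xbeta)/\|x_n-\xbeta\|$, and $a^\T(x_n-\xbeta)=\|x_n-\xbeta\|\,a^\T\hat x\,(1+o(1))$, a short computation gives that the $\beta$-component of $w_n$ is $O(\|x_n-\xbeta\|)\to 0$ and its $\langle\lambda,a\rangle$-component converges to $-\hat x/(a^\T\hat x)$, so $w_n\to y^*:=(-\hat x/(a^\T\hat x),0)$; since $(d^*)^\T y^*=0$, $\dist(z_n,y^*+\langle d^*\rangle)=\|w_n-y^*\|\to 0$. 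Finally $(-\lambda,\nabla\phifun(\beta))^\T y^*=\lambda^\T\hat x/(a^\T\hat x)=r(\beta)$ by \Cref{lemma:characterizationrbeta}, so $y^*$ is on the inequality's hyperplane. Hence $(z_n)_n\subseteq\snonhomo$ exposes the inequality at infinity.

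The main obstacle is this last estimate: controlling $w_n$ and showing convergence, which is exactly where the explicit value of $r(\beta)$ matters — it upgrades the scalar limit in \Cref{lemma:characterizationrbeta} to convergence of the whole orthogonal component of the diverging sequence $(z_n)_n$; the rest is bookkeeping. Having certified every $\beta\in D_1(0)$, \Cref{thm:mixed_exposed_maximality_wrt} yields that $\maxnonhomob$ is maximal $\snonhomo$-free with respect to $\HH$, hence maximal $\shomob$-free with respect to $\HH$. For the final claim, if $\lambda=\xlp/\|\xlp\|$ with $(\xlp,\ylp)\in\HH$ and $\|\xlp\|>\|\ylp\|$, then $\lambda\neq\pm a$ by \Cref{rmk:lambda_eq_pma}, \Cref{prop:max_homo_bad} gives $(\xlp,\ylp)\in\inte(\maxhomobb)$, and $\inte(\maxhomobb)\subseteq\inte(\maxnonhomob)$ since $\maxhomobb\subseteq\maxnonhomob$.
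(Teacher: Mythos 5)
Your proposal is correct and follows essentially the same route as the paper's proof: the same case split on the sign of $\lambda^\T a + d^\T\beta$, the same scaled exposing point $-(a^\T\lambda+d^\T\beta)^{-1}(\lambda,\beta)$ in the first case, and the same diverging sequence $z_n$ with limit direction $\tfrac{1}{\sqrt2}(\xbeta,\beta)$ and anchor point $(-\hat x/(a^\T\hat x),0)$ exposing at infinity in the second. The only differences are cosmetic: you verify the distance condition via an orthogonal decomposition of $z_n$ where the paper subtracts an explicit multiple of $v$ directly, and you are more explicit than the paper about which set each exposing statement is taken with respect to and about the final interiority claim.
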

\begin{proof}
  Let $\snonhomo = \shomob \cap \HH$.
  By \Cref{thm:free_nonhomo_bad}, $\maxnonhomob$ is $\snonhomo$-free.

  To show maximality we will use Theorem~\ref{thm:mixed_exposed_maximality_wrt},
  that is, we will show that every inequality of $\maxnonhomob$ is either
  exposed by a point in $\snonhomo \cap \maxnonhomob$ or exposed at
  infinity by a sequence in $\snonhomo$.

  Let $\beta_0 \in D_1(0)$ and consider the valid inequality $-\lambda^\T
  x + \nabla \phifun(\beta_0)^\T y \leq r(\beta_0)$.
  Assume, first, that $a^\T \lambda + d^\T\beta_0 < 0$
  As $a^\T \lambda + d^\T\beta_0 < 0$, we have that $r(\beta_0) = 0$,
  $\phifun(\beta_0) = \|\beta_0\| = 1$, and $\nabla \phifun(\beta_0) = \beta_0$.
  Hence, the inequality is $-\lambda^\T x + \beta_0^\T y \leq 0$.
  It is exposed by
  \[
    \frac{-1}{a^\T \lambda + d^\T\beta_0}(\lambda, \beta_0) \in \snonhomo
    \cap \maxhomobb \subseteq \snonhomo \cap \maxnonhomob.
  \]

  Now, let us assume that $a^\T \lambda + d^\T\beta_0 \geq 0$.
  We will show that there is a sequence in $\snonhomo$ that exposes
  $-\lambda^\T x + \nabla \phifun(\beta_0)^\T y \leq r(\beta_0)$ at infinity.
  Let $(x_n)_n \subseteq \langle \lambda, a \rangle$ be a sequence converging to
  $\xbetaO$ such that $\|x_n\| = 1$, $a^\T x_n + d^\T \beta_0 < 0$ (\Cref{lemma:characterizationrbeta}).
  \[
    r(\beta_0) = \lim_{n \to \infty} \frac{\lambda^\T (x_n - \xbetaO)}{a^\T (x_n
    - \xbetaO)}.
  \]
  Consider the sequence conformed by
  \[
    z_n = -\frac{(x_n, \beta_0)}{a^\T x_n + d^\T \beta_0} = \frac{(x_n,
    \beta_0)}{a^\T (\xbetaO - x_n)} \in \snonhomo,
  \]
  where the equality above follows from $a^\T \xbetaO + d^\T \beta_0 = 0$.
  We proceed to verify that $z_n$ exposes $-\lambda^\T x + \nabla
  \phifun(\beta_0)^\T y \leq r(\beta_0)$ at infinity.

  As $x_n \to \xbetaO$, we have that $\|z_n\| \to \infty$.
  Also, $\frac{z_n}{\|z_n\|} = \frac{1}{\sqrt{2}} (x_n, \beta_0)$ converges to
  $v = \frac{1}{\sqrt{2}} (\xbetaO, \beta_0) \in \maxhomobb = \rec (\maxnonhomob)$
  and exposes $-\lambda^\T x + \nabla \phifun(\beta_0)^\T y \leq 0$.

  Finally, we have to show that there exists a $w$ such that $(-\lambda, \nabla
  \phifun(\beta_0))^\T w = r(\beta_0)$ and $\dist(z_n, w + \langle v \rangle)
  \to 0$.
  Let $\hat x = \lim_{n \to \infty} \frac{x_n - \xbetaO}{\|x_n - \xbetaO\|}$ and
  let $w = (-\frac{\hat x}{a^\T \hat x}, 0)$.
  We have that $(-\lambda, \nabla \phifun(\beta_0))^\T w = r(\beta_0)$.
  Also,
  \[
    z_n - \frac{\sqrt{2}}{a^\T (\xbetaO - x_n)} v = \frac{1}{a^\T (\xbetaO
    - x_n)}(x_n - \xbetaO, 0) \to - (\frac{\hat x}{a^\T \hat x}, 0) = w.
  \]
  Thus, $\dist(z_n, w + \langle v \rangle) \to 0$.
\end{proof}

\subsubsection{A closed-form formula for $\maxnonhomob$}
Since the construction of $\maxnonhomob$ involves translating some of the
inequalities of $\maxhomobb$ of its outer-description, it is natural to ask if
this translation yields a translation of the whole function $\phifun$. This
would yield a closed-form formula for $\maxnonhomob$ which is much more appealing from a computational standpoint.

In what follows, we ask whether there exists an $(x_0, y_0)$ such that for every $\beta$ such that
\begin{align*}
&\{(x,y) \st -\lambda^\T x + \nabla \phifun(\beta)^\T y \leq r(\beta), \text{
for all } \beta, \lambda^\T a + d^\T \beta  \geq 0 \}\\
=\, & \{(x,y) \st -\lambda^\T (x - x_0) + \nabla \phifun(\beta)^\T (y - y_0) \leq 0,  \text{ for all } \beta, \lambda^\T a + d^\T \beta  \geq 0 \}.
\end{align*}
In order to reach this equality it would suffice to satisfy 
\begin{equation}\label{eq:equationtosatisfy}
\lambda^\T x_0 - \nabla \phifun(\beta)^\T y_0 = -r(\beta).
\end{equation}
Note that since $\lambda^\T a + d^\T \beta  \geq 0$
\begin{align}
  \nabla \phifun(\beta) &= \sqrt{1 - (\lambda^T a)^2}
  \frac{W\beta}{\|\beta\|_W} - \lambda^\T a d \label{eq:gradientofbeta} \\
  r(\beta) &= \lambda^T a + d^\T \beta \frac{\sqrt{1 - (\lambda^T
  a)^2}}{\|\beta\|_W}. \nonumber
\end{align}
where $W = I - d d^\T$. Thus \eqref{eq:equationtosatisfy} becomes
\[
 \lambda^\T (x_0 + a d^\T y_0) - \sqrt{1 - (\lambda^T a)^2} \frac{\beta^\T W y_0}{\|\beta\|_W} = -\lambda^T a - d^\T \beta \frac{\sqrt{1 - (\lambda^T
a)^2}}{\|\beta\|_W}.
\]
From the last expression, we see that if we are able to find $(x_0, y_0)$ such that
\begin{subequations} 
\begin{align}
x_0 + a d^\T y_0 =& -a \label{eq:tosatisfy1}\\ 
d^\T \beta =& \beta^\T W y_0 \label{eq:tosatisfy2}
\end{align}
\end{subequations}
then \eqref{eq:equationtosatisfy} would hold. Note that $d$ is an eigenvector of $W = I - d d^\T$  with eigenvalue $1 - \|d\|^2$.
Thus, with $y_0 = \frac{d}{1 - \|d\|^2}$ we can easily check that \eqref{eq:tosatisfy2} holds.
With $y_0$ defined, in order to satisfy \eqref{eq:tosatisfy1} it suffices to set
\[x_0 = -a (d^\T y_0 +1) = -\frac{a}{1 - \|d\|^2}.\]

In summary, we arrive to the following expression for $\maxnonhomob$,
\begin{equation}\label{eq:representationofC}
  \maxnonhomob = \left\{ (x,y) \st
    \begin{split}
      \phifun(y) \leq \lambda^\T x &&\text{ if } \lambda^\T a \|y\| + d^\T y \leq 0\\
      \phifun\left(y - \frac{d}{1 - \|d\|^2} \right) \leq \lambda^\T \left(x + \frac{a}{1
    - \|d\|^2}\right)  &&\text{otherwise}
    \end{split}
  \right\}.
\end{equation}

\section{On the diagonalization and homogenization of quadratics}
\label{sec:transformations}
Consider an arbitrary quadratic set
\[
  \cQ = \{ s \in \mathbb{R}^p \st s^\T Q s + b^\T s + c \leq 0\}.
\]
Given a point $\slp \notin \cQ$ we can construct a maximal $\cQ$-free set that
contains $\slp$ using the techniques developed in the previous sections.
The idea to do this is first to find a one-to-one map $T$ such that
\begin{align*}
  T(\cQ) &= \shomob \cap \HH = \{ (x,y,z) \in \mathbb{R}^{n+m+l} \st \|x\|
  \leq \|y\|,\, a^\T x + d^\T y  + h^\T z = -1 \}\\
  T(\slp) &\in \HH \setminus \shomob,
\end{align*}
for some hyperplane $H$, that is, for some $a, d$ and $h$.

Then, we construct a maximal $\cQ$-free set using the following fact which can be easily verified: if $C$ is a maximal $\shomob$-free set with respect to $\HH$ that contains
$T(\slp)$, then $T^{-1}(C)$ is a maximal $\cQ$-free set containing $\slp$.

Here we show a surprising fact: \emph{which} maximal $\cQ$-free set is obtained
heavily depends on the choice of $T$.
We illustrate this interesting feature with our running example.

\begin{example} \label{ex:transformations}
  Let
  \[
    \cQ = \{ s \in \mathbb{R}^2 \st -2 + 2 \sqrt{2}s_1 - 2 \sqrt{2} s_2 + 2 s_1
    s_2 \leq 0\}
  \]
  and $\slp = (-2,-2) \not\in \cQ$.
  The following map
  \[
    \tau_1(s_1,s_2) = (s_1, s_2, \sqrt{2} + s_1 - s_2)
  \]
  is one-to-one and satisfies
  \[
    \tau_1(\cQ) = \shomorunning\cap H_1,
  \]
  where $\shomorunning\cap H_1$ is defined in \Cref{ex:simpleexnonhomo}
  and is given by
  \[
    \shomorunning \cap H_1  = \{ (x_1,x_2, y) \in \mathbb{R}^3 \st  \|x_1,x_2 \|
    \leq |y|,\, -x_1 + x_2 + y = -\sqrt{2} \}.
  \]
  Computing a maximal $\shomorunning$-free set with respect to $H_1$ containing
  $\tau_1(\bar{s}) = (-2,-2,\sqrt{2})$ yields the same maximal $S^2\cap H_1$-free set
  we compute in \Cref{ex:simpleexnonhomo-maximal}, that is
  \begin{align*}
    C_1 \cap H_1 = \{ (x,y) \st
    & \frac{1}{\sqrt{2}}(x_1 + x_2) - y \leq 0, \\
    & \frac{1}{\sqrt{2}}(x_1 + x_2) + \frac{1}{\sqrt{2}}y \leq 1 \\
    & -x_1 + x_2 + y = -\sqrt{2} \}.
  \end{align*}
  As $\tau_1^{-1}$ is simply the projection onto the first two coordinates, we have
  that
  \[
    \tau^{-1}_1(C_1) = \left\{
      s \in \mathbb{R}^2 \st \left(\frac{1}{\sqrt{2}}
      - 1 \right) s_1 + \left(\frac{1}{\sqrt{2}} + 1\right) s_2 +\sqrt{2} \leq
      0,\,  \sqrt{2} s_1 -2 \leq 0
    \right\}
  \]
  is our maximal $\cQ$-free set. This is exactly the set we show in
  \Cref{fig:sub-simpleexamplenonhomo-2d-maximal}.

  Now we consider a different transformation for $\cQ$.
  Let
  \begin{align*}
    T_1(s_1,s_2) &= \frac{1}{2}
    \left[
      \begin{array}{cc}
        -1 & 1 \\
        1 & 1
      \end{array}
    \right]
    \left[
      \begin{array}{cc}
        s_1 - \sqrt{2} \\
        s_2 + \sqrt{2}
      \end{array}
    \right], \\
    T_2(s_1,s_2) &= (-1, s_1, s_2), \text{ and}\\
    \tau_2 &= T_2 \circ T_1.
  \end{align*}
  For the curious reader, $T_1$ is obtained from an
  eigen-decomposition of the quadratic form.
  After some algebraic manipulation we can see that
  \begin{align*}
    T_1(\cQ) &=  \{w \in \mathbb{R}^2 \st T_1^{-1}(w_1,w_2) \in \cQ\} \\
             &= \{w \in \mathbb{R}^2 \st 1 - w_1^2 + w_2^2 \leq 0 \}.
  \end{align*}
  Thus, $\tau_2$ is one-to-one and
  \[
    \tau_2(\cQ) = \{ (x_1,x_2,y) \mathbb{R}^2 \st \|x_1,x_2\| \leq |y|,\, x_1 = -1 \}.
  \]
  Letting $S^3_{\leq 0} = \{ (x_1,x_2,y) \mathbb{R}^3 \st \|x_1,x_2\| \leq
  |y|,\, x_1 \leq 0 \}$ and $H_2 = \{ (x_1,x_2,y) \mathbb{R}^3 \st \, x_1 = -1
  \}$, we have that $\tau_2(\cQ) = S^3_{\leq 0} \cap H_2$.
  We can now construct a maximal $S^3_{\leq 0}$-free set with respect to $H_2$.
  For this, note that in this case $a = (1,0)$ and $d = 0$.
  Also, $\tau_2(\slp) = (-1, -2, \sqrt{2})$ and so $\lambda
  =\frac{1}{\sqrt{5}}(-1,-2)$.
  As $a^\T \lambda |y| + d y < 0$ for every $y \in \mathbb{R}$, we
  have that $r(y) = 0$ and $\phifun(y) = |y|$.
  By Theorem~\ref{thm:max_nonhomo_bad},
  \[
    C_2 = \{(x_1, x_2, y) \in \mathbb{R}^3 \st |y| \leq \lambda^\T x\}
  \]
  is maximal $S^3_{\leq 0}$-free set with respect to $H_2$.
  Therefore, $\tau_2^{-1}(C_2)$ is maximal $\cQ$-free.
  In \Cref{fig:transformations} we show the sets $\cQ$ and both maximal
  $\cQ$-free sets given by $\tau_1^{-1}(C_1)$ and $\tau_2^{-1}(C_2)$.
  Note that in this case, the set $\tau_2^{-1}(C_2)$ does not have an asymptote,
  and both its facets have an exposing point.
\end{example}

\begin{figure}[t]
  \centering
  \includegraphics[scale=0.4]{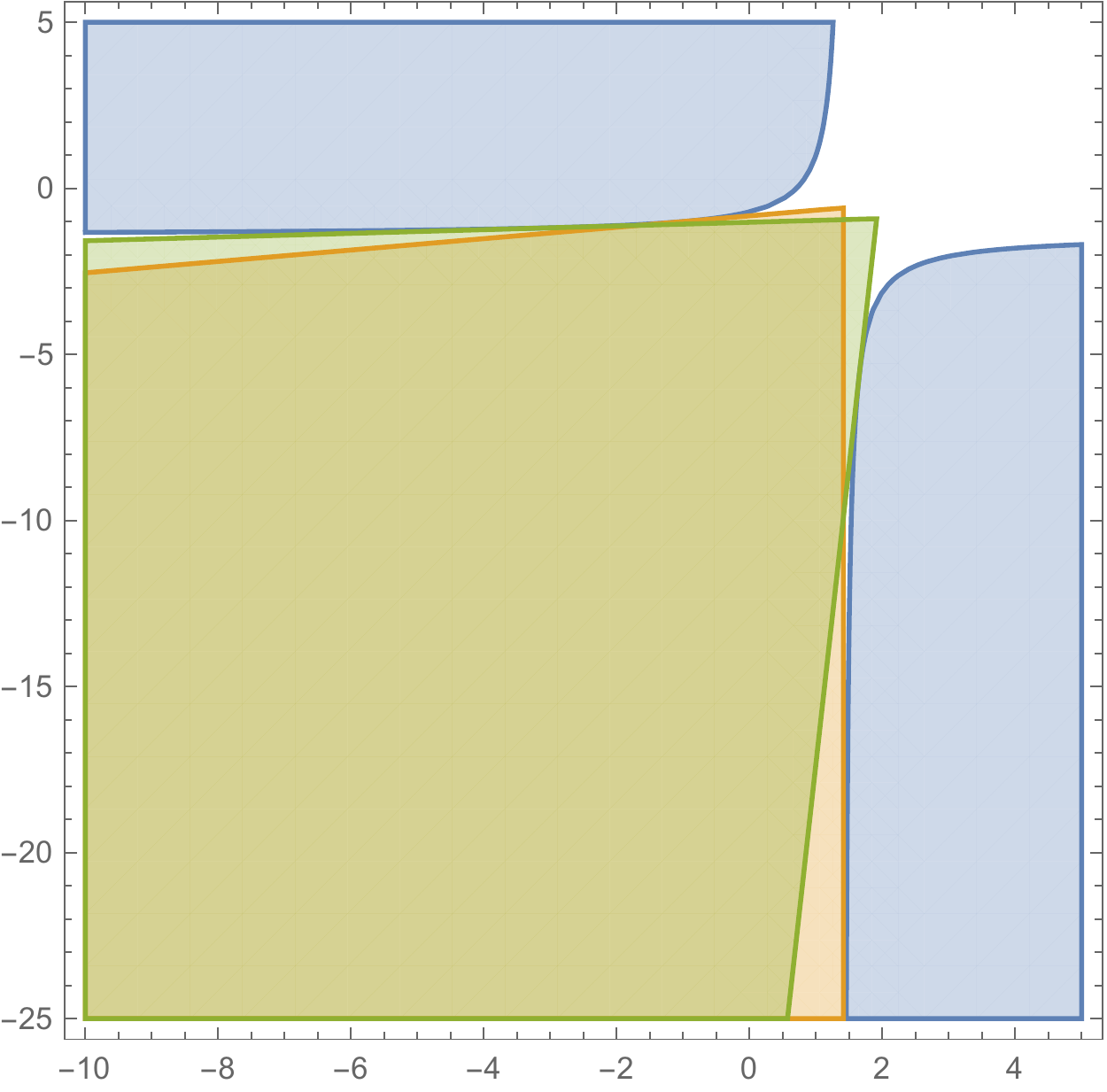}
  \caption{
    Different maximal $S$-free sets obtained from different
    transformations, as discussed in \Cref{ex:transformations}.
    The quadratic set $\cQ$ (blue), a maximal $\cQ$-free set obtained from $\tau_1$
    (orange), and another such set obtained from $\tau_2$ (green).
  }
  \label{fig:transformations}
\end{figure}

This example shows the important role of 
the transformation used to bring the quadratic set to 
the form $\snonhomo$. The resulting maximal $\snonhomo$-free set can significantly change.
This opens an array of interesting questions regarding the role of
transformations in our approach: Can we distinguish the transformations that
generate $\snonhomo$-free sets with asymptotes? Is there a benefit/downside from
using the latter sets? These an other questions are left for future work.

\section{Summary and future work} \label{sec:conclusions}

In this work we have shown how to construct \emph{maximal} quadratic-free sets, i.e., convex sets whose interior does not intersect the sublevel set of a quadratic function. Using the long-studied \emph{intersection cut} framework, these sets can be used in order to generate deep cutting planes for quadratically constrained problems. We strongly believe that, by carefully laying a theoretical framework for quadratic-free sets, this work provides an important contribution to the understanding and future computational development of non-convex quadratically constrained optimization problems.

The maximal quadratic-free sets we construct in this work allow for an \emph{efficient} computation of the corresponding intersection cuts. Computing such cutting planes amount to solving a simple one-dimensional convex optimization problem using the quadratic-free sets we show here. Moreover, even if in our constructions and maximality proofs we use semi-infinite outer-descriptions of $\snonhomo$-free sets such as \eqref{eq:maxnonhomob}, all
of them have closed-form expressions that are more adequate for
computational purposes: see \eqref{eq:clambdadef},
\eqref{eq:closedformcglambda}, \eqref{eq:cphilambdadef},
\eqref{eq:representationofC} for these expressions for the sets $\maxhomo$, $\maxhomobg$, $\maxhomobb$ and $\maxnonhomob$, respectively, and \eqref{eq:phi_characterization} for the explicit description of the $\phifun$ function. This ensures efficient separation in LP-based methods for quadratically constrained optimization problems.

The empirical performance of these intersection cuts remains to be seen, and it is part of future work. On these lines, the development of a cut strengthening procedure is likely to be important for obtaining good empirical performance.  Other important open questions involve the better understanding of the role different transformations of quadratic inequalities have (\Cref{sec:transformations}), a theoretical and empirical comparison with the method proposed by Bienstock et al. \cite{bienstock2016outer,Bienstock_2019}, and devising new methods for producing other families of quadratic-free sets. On this last point, we conjecture our method produces all maximal quadratic-free sets in 3 dimensions, and have evidence showing this is not the case for dimension 4. This is also subject of ongoing work. 

\section*{Acknowledgements}
We would like to thank Franziska Schl\"osser for several inspiring conversations. We would also like to thank Stefan Vigerske, Antonia Chmiela, Ksenia Bestuzheva and Nils-Christian Kempke for helpful discussions. Lastly, we would like to acknowledge the support of the IVADO Institute for Data Valorization for their support through the IVADO Post-Doctoral Fellowship program and to the IVADO-ZIB academic partnership.

\newpage
\bibliographystyle{alpha}
\bibliography{biblio}

\newpage
\appendix
\section{Appendix}
\counterwithin{lemma}{section}
\counterwithin{proposition}{section}

\begin{lemma} \label{lemma:valid_ineqs}
  Let $\phi : \RR^m \to \RR$ be a sublinear function, $\lambda \in D_1(0)$, and
  let
  \[
    C = \{ (x,y) \st \phi(y) \leq \lambda^\T x \}.
  \]
  Let $\alpha^\T x + \gamma^\T y \leq \delta$ be a nontrivial valid inequality
  for $C$.
  Then $\alpha \neq 0$ and if $\|\alpha\| = 1$, then
  \[
    \alpha = -\lambda \text{ and } \gamma \in \partial \phi(0).
  \]
\end{lemma}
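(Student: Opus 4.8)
The plan is to exploit that $C$ is a convex cone together with the elementary observation that, for every $y$, the ``cheapest'' $x$ witnessing $(x,y)\in C$ is $x=\phi(y)\lambda$. First I would record the structural facts. Since $\phi$ is sublinear we have $\phi(0)=0$, and $C$ is a convex cone: it contains the origin and is invariant under multiplication by positive scalars, because $\phi(ty)=t\phi(y)\leq t\lambda^\T x=\lambda^\T(tx)$ for $t>0$. Consequently any valid inequality $\alpha^\T x+\gamma^\T y\leq\delta$ has $\delta\geq 0$ (evaluate at the origin), and applying validity to $(tx,ty)$ with $t\to\infty$ forces the homogeneous inequality $\alpha^\T x+\gamma^\T y\leq 0$ to be valid for $C$ as well. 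So from this point on I may work with the homogeneous version of the inequality.

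Next I would prove $\alpha\neq 0$. If $\alpha=0$, then for an arbitrary $y\in\RR^m$ the point $(\phi(y)\lambda,\,y)$ lies in $C$, since $\lambda^\T(\phi(y)\lambda)=\phi(y)$ (using $\|\lambda\|=1$). Hence $\gamma^\T y\leq 0$ for every $y$, which forces $\gamma=0$, contradicting nontriviality of $(\alpha,\gamma)$. Assuming now $\|\alpha\|=1$, I would pin down $\alpha$ by testing with $y=0$: here $(x,0)\in C$ iff $\lambda^\T x\geq 0$, so $x\mapsto\alpha^\T x$ is nonpositive on the closed halfspace $\{x\st\lambda^\T x\geq 0\}$. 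Restricting to its bounding hyperplane $\lambda^\perp$, which is a subspace, gives $\alpha\in\langle\lambda\rangle$; nonpositivity on the whole halfspace then forces $\alpha=-t\lambda$ with $t\geq 0$, and normalization yields $\alpha=-\lambda$.

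Finally, to obtain $\gamma\in\partial\phi(0)$, I would reuse the test points $(\phi(y)\lambda,\,y)\in C$: validity of $-\lambda^\T x+\gamma^\T y\leq 0$ at such a point gives $-\phi(y)+\gamma^\T y\leq 0$, i.e.\ $\gamma^\T y\leq\phi(y)$ for all $y\in\RR^m$; since $\phi(0)=0$, this is precisely the defining property of the subdifferential $\partial\phi(0)$ of the convex function $\phi$ at the origin. I do not anticipate a real obstacle in this argument; the only points requiring a little care are the reduction to homogeneous inequalities (keeping track of the constant $\delta$) and checking that the chosen test points genuinely lie in $C$, after which each conclusion follows from a one-line computation.
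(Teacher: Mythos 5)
Your proof is correct and follows essentially the same route as the paper's: the same test points $(\phi(y)\lambda, y)$ establish both $\alpha\neq 0$ and $\gamma\in\partial\phi(0)$, and the same lineality/halfspace argument on the slice $y=0$ pins down $\alpha=-\lambda$. The only cosmetic difference is that you homogenize the inequality once at the start, whereas the paper keeps $\delta$ and disposes of it by scaling the test point by $\mu\to\infty$ at the end.
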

\begin{proof}
  As $\phi$ is defined over $\RR^m$, if $\alpha = 0$, then $\gamma = 0$.
  Given that the inequality is nontrivial, it must be that $\alpha \neq 0$.

  Assume that $\alpha \in D_1(0)$.
  Since $\{ (x,0) \st \lambda^\T x = 0 \}$ is contained in the lineality space
  of $C$, it follows that $\alpha$ is either $\lambda$ or $-\lambda$.
  As $\mu (\lambda, 0) \in C$ for every $\mu \geq 0$, we conclude that $\alpha
  = -\lambda$.

  Let $\mu > 0$ and $(x,y)$ be such that $\lambda^\T x = \phi(y)$.
  From the sublinearity of $\phi$ follows that $\mu(x,y) \in C$.
  The inequality evaluated at $\mu(x,y)$ reads $\gamma^\T \mu y \leq \delta
  + \mu \phi(y)$.
  Dividing by $\mu$ and letting $\mu \to \infty$, we conclude that
  $\gamma^\T y \leq \phi(y)$, i.e., $\gamma \in \partial \phi(0)$.
\end{proof}

\exposingIneq
  
\begin{proof}
  We need to verify both conditions of \Cref{def:expose}.
  As $\phi$ is positively homogeneous and differentiable at $\ylp$, then
  $\phi(\ylp) = \nabla \phi(\ylp) \ylp$.
  Thus, evaluating $-\lambda^\T x + \nabla \phi(\ylp)^\T y$ at
  $(\xlp, \ylp)$ yields $-\lambda^\T \xlp + \phi(\ylp)$, which is 0 by
  hypothesis.
  This shows that the inequality is tight at $(\xlp, \ylp)$.

  Now, let $\alpha^\T x + \gamma^\T y \leq \delta$ be a non-trivial valid
  inequality tight at $(\xlp, \ylp)$.
  Then, $\delta = \alpha^\T \xlp + \gamma^\T \ylp$ and we can rewrite the
  inequality as $\alpha^\T (x - \xlp) + \gamma^\T (y - \ylp) \leq 0$.
  Notice that $(\phi(y) \lambda, y) \in C$, thus,
  $\alpha^\T \lambda (\phi(y) - \phi(\ylp)) + \gamma^\T (y - \ylp) \leq 0$ for
  every $y \in \mathbb{R}^m$.
  Subtracting $\alpha^\T \lambda \nabla \phi(\ylp)^\T (y - \ylp)$ and dividing
  by $\|y - \ylp\|$ we obtain the equivalent expression
  \[
    \alpha^\T \lambda \frac{\phi(y) - \phi(\ylp) - \nabla \phi(\ylp)^\T (y
    - \ylp)}{\|y - \ylp\|} \leq  (-\gamma -\alpha^\T \lambda \nabla
    \phi(\ylp))^\T \frac{y - \ylp}{\|y - \ylp\|}.
  \]
  Since $\phi$ is differentiable at $\ylp$, the limit when $y$ approaches $\ylp$
  of the left hand side of the above expression is 0.
  However, one can make the expression $\frac{y - \ylp}{\|y - \ylp\|}$ converge
  to any point of $D_1(0)$.
  Therefore,
  \[
    0 \leq  (-\gamma -\alpha^\T \lambda \nabla \phi(\ylp))^\T \beta
  \]
  for every $\beta \in D_1(0)$.
  This implies that $\gamma = -\alpha^\T \lambda \nabla \phi(\ylp)$.
  From here we see that $\alpha \neq 0$ as otherwise $\alpha = \gamma = 0$ and
  the inequality would be trivial.

  Given that any $(x,0)$ such that $\lambda^T x = 0$ belongs to $C$, it follows
  that $\alpha$ is parallel to $\lambda$, i.e., there exists $\nu \in
  \mathbb{R}$ such that $\alpha = \nu \lambda$.
  Furthermore, $(\mu \lambda, 0) \in C$ for every $\mu \geq 0$, implies that
  $0 > \alpha^\T \lambda = \nu$.
  Therefore, $\gamma = - \nu \nabla \phi(\ylp)$ and the inequality reads
  $\nu \lambda^\T(x - \xlp) - \nu \nabla \phi(\ylp)^\T(y - \ylp) \leq 0$.
  Dividing by $|\nu|$ and using that $-\lambda^\T x + \nabla \phi(\ylp)^\T
  y \leq 0$ is tight at $(\xlp, \ylp)$, we conclude that the inequality can be
  written as
  \[
    -\lambda^\T x + \nabla \phi(\ylp)^\T y \leq 0.
  \]
\end{proof}

\begin{proposition} \label{prop:phi}
  Let $a,\lambda \in D_1(0)$, $\lambda \neq \pm a$ and let $d \in \mathbb{R}^m$
  be such that $\|d\| \leq 1$.
  The (Lagrangian) dual problem of
  \begin{equation} \label{eq:phi_primal}
    \max_x \{ \lambda^\T x \st \|x\| \leq \|y\|, a^\T x + d^\T
    y \leq 0 \}
  \end{equation}
  is
  \begin{equation} \label{eq:phi_dual}
    \inf_\theta \{ \|\lambda - \theta a\| \|y\| - \theta d^\T y \st \theta \geq
    0 \}.
  \end{equation}
  The optimal solution to \eqref{eq:phi_primal} is $x : \mathbb{R}^m \to
  \mathbb{R}^n$,
  \begin{equation} \label{eq:phi_primal_solution}
    x(y) =
    \begin{cases}
      \lambda \|y\|,
    &\text{ if } \lambda^\T a \|y\| + d^\T y \leq 0 \\
    \sqrt{\frac{\|y\|^2 - (d^\T y)^2}{1 - (\lambda^T a)^2}} \lambda
    -\left( d^\T y + \lambda^T a \sqrt{\frac{\|y\|^2 - (d^\T y)^2}{1
    - (\lambda^T a)^2}} \right) a,
    &\text{ otherwise. }
    \end{cases}
  \end{equation}
  The optimal dual solution is $\theta : \mathbb{R}^m \to \mathbb{R}_+ \cup
  \{+\infty\}$,
  \begin{equation} \label{eq:phi_dual_solution}
    \theta(y) =
    \begin{cases}
      0,
    &\text{ if } \lambda^\T a \|y\| + d^\T y \leq 0 \\
    \lambda^T a + d^\T y \frac{\sqrt{1 - (\lambda^T
    a)^2}}{\sqrt{\|y\|^2 - (d^\T y)^2}},
    &\text{ otherwise. }
    \end{cases}
  \end{equation}
  Here, $\frac{1}{0} = +\infty$ and $r + (+\infty) = +\infty$ for every $r \in
  \mathbb{R}$.
  Moreover, strong duality holds, that is, $\eqref{eq:phi_primal} =
  \eqref{eq:phi_dual}$, and
  \begin{equation} \label{eq:phi_value}
    \eqref{eq:phi_primal} =
    \begin{cases}
      \|y\|,
    &\text{ if } \lambda^\T a \|y\| + d^\T y \leq 0 \\
    \sqrt{(\|y\|^2 - (d^\T y)^2)(1 - (\lambda^T a)^2)} - d^\T y \lambda^\T a,
    &\text{ otherwise. }
    \end{cases}
  \end{equation}
  Finally, \eqref{eq:phi_value} holds even if $\lambda = \pm a$.
\end{proposition}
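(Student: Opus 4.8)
The plan is to treat \eqref{eq:phi_primal} as a parametric (in $y$) convex program, form its Lagrangian dual, and then establish all four assertions — the dual form, the primal and dual optimal solutions, and strong duality with the value formula \eqref{eq:phi_value} — simultaneously by a sandwich argument, so that no constraint qualification is ever needed.

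First I would build the dual. Dualizing only the affine constraint $a^\T x + d^\T y \le 0$ with multiplier $\theta \ge 0$ and keeping the ball $\|x\| \le \|y\|$ intact, the inner maximization of $(\lambda - \theta a)^\T x - \theta d^\T y$ over $\|x\| \le \|y\|$ produces the dual function $g_y(\theta) = \|y\|\,\|\lambda - \theta a\| - \theta d^\T y$; since $\|\lambda\| = \|a\| = 1$ this equals $\|y\|\sqrt{1 - 2\theta\lambda^\T a + \theta^2} - \theta d^\T y$, which is convex in $\theta$ (a nonnegative multiple of a norm of an affine map of $\theta$, plus a linear term). This gives \eqref{eq:phi_dual}, and weak duality $\eqref{eq:phi_primal} \le \eqref{eq:phi_dual}$ holds automatically.

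Next I would exhibit explicit primal- and dual-feasible points with equal objective value. On the primal side, the ball-maximizer $\|y\|\lambda$ already satisfies the linear constraint exactly when $\lambda^\T a\|y\| + d^\T y \le 0$, giving the first branch of \eqref{eq:phi_primal_solution} with value $\|y\|$; otherwise the maximizer lies on both the sphere $\|x\| = \|y\|$ and the hyperplane $a^\T x = -d^\T y$, stationarity forces $x \in \langle \lambda, a\rangle$, and writing $x = R\lambda + Sa$, imposing both equalities, and choosing the root maximizing $\lambda^\T x$ yields the second branch of \eqref{eq:phi_primal_solution}; a short computation then checks $a^\T x(y) = -d^\T y$, $\|x(y)\| = \|y\|$, and $\lambda^\T x(y) = \sqrt{(\|y\|^2 - (d^\T y)^2)(1 - (\lambda^\T a)^2)} - d^\T y\,\lambda^\T a$, i.e.\ \eqref{eq:phi_value}. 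On the dual side, minimizing the convex $g_y$ over $\theta \ge 0$: since $g_y'(0) = -(\lambda^\T a\|y\| + d^\T y)$, the minimizer is $\theta = 0$ exactly in the first branch (value $\|y\|$), and otherwise the unconstrained stationary point is positive; solving $g_y'(\theta) = 0$ via the substitution $u = \theta - \lambda^\T a$ together with $1 - 2\theta\lambda^\T a + \theta^2 = u^2 + 1 - (\lambda^\T a)^2$, and picking the root whose sign matches $d^\T y$, gives \eqref{eq:phi_dual_solution}; substituting back, $g_y(\theta(y))$ collapses to the same expression \eqref{eq:phi_value}. Since primal value $\ge \lambda^\T x(y) = g_y(\theta(y)) \ge$ dual value $\ge$ primal value, every inequality is an equality, which at once yields strong duality, the value formula, and optimality of both $x(y)$ and $\theta(y)$.

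Finally I would mop up the degenerate cases and $\lambda = \pm a$. When $\|d\| = 1$ and $y \in d\mathbb{R}_+$, the feasible set of \eqref{eq:phi_primal} degenerates to a single point, $\sqrt{\|y\|^2 - (d^\T y)^2} = 0$, and $\theta(y) = +\infty$; here $g_y(\theta(y))$ is read as $\lim_{\theta\to\infty} g_y(\theta) = -\|y\|\lambda^\T a$, which still matches \eqref{eq:phi_value}, and the conventions $\tfrac1 0 = +\infty$ and $r + (+\infty) = +\infty$ keep the formulas coherent; the case $y = 0$ is trivial. For $\lambda = \pm a$ the denominators $1 - (\lambda^\T a)^2$ vanish so the solution formulas are not used, but \eqref{eq:phi_primal} is solved by inspection — for $\lambda = a$ the constraint forces $\lambda^\T x \le -d^\T y$ directly, and for $\lambda = -a$ the ball-maximizer $-\|y\|a$ is always feasible — and in both cases the value agrees with \eqref{eq:phi_value}. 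I expect the main obstacle to be the bookkeeping in the ``otherwise'' branch: correctly selecting the root of the sphere–hyperplane intersection and of $g_y'(\theta)=0$, and verifying algebraically that these independently obtained primal and dual values coincide with each other and with \eqref{eq:phi_value}, all while keeping the edge cases $\|d\| = 1$ and $y = 0$ controlled; everything else is routine.
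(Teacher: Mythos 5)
Your proposal is correct and follows essentially the same route as the paper: form the Lagrangian dual (you keep the ball constraint and dualize only the linear one, the paper dualizes both and then eliminates the multiplier of the norm constraint, yielding the same dual \eqref{eq:phi_dual}), exhibit the explicit primal point $x(y)$ and dual point $\theta(y)$, verify feasibility of each, and close the gap by matching their objective values so that weak duality forces optimality and strong duality, with the $\theta(y)=+\infty$ limit and the $\lambda=\pm a$ cases handled separately at the end. The only differences are presentational (you derive the formulas from stationarity where the paper merely verifies them, and your sign argument for $\theta(y)\ge 0$ via $g_y'(0)<0$ and convexity is slightly cleaner than the paper's algebraic check), so no changes are needed.
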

\begin{proof}
  First, note that since $\lambda \neq \pm a$ and $\|d\| \leq 1$, $x(y)$ and
  $\theta(y)$ are defined for every $y \in \mathbb{R}^m$.
  Second, to make some of the calculations that follow more amenable, let $S(y)
  = \sqrt{\frac{\|y\|^2 - (d^\T y)^2}{1 - (\lambda^T a)^2}}$.

  The Lagrangian of \eqref{eq:phi_primal} is $L : \mathbb{R}^n \times
  \mathbb{R}_+^2 \to \mathbb{R}$,
  \[
    L(x,\mu,\theta) = \lambda^\T x - \mu(\|x\| - \|y\|) - \theta(a^\T x + d^\T
    y).
  \]
  Thus, the dual function is
  \[
    d(\mu, \theta) = \max_x L(x,\mu,\theta).
  \]
  We have that $d(\mu, \theta)$ is infinity whenever $\mu < \|\lambda
  - a \theta\|$, and $\mu \|y\| - \theta d^\T y$ otherwise.
  Hence, the dual problem, $\min_{\theta, \mu \geq 0} d(\mu, \theta)$, is $\min
  \{ \mu \|y\| - \theta d^\T y \theta \st \theta \geq 0, \mu \geq \|\lambda
  - a \theta\|\}$ which is \eqref{eq:phi_dual}.

  Let us assume that $\lambda^\T a \|y\| + d^\T y \leq 0$.
  Clearly, $x(y) = \lambda \|y\|$ is feasible for \eqref{eq:phi_primal}.
  Its objective value is $\|y\|$.
  On the other hand, $\theta(y) = 0$ is always feasible for
  \eqref{eq:phi_primal}.
  Its objective value is also $\|y\|$, therefore, $x(y)$ is the primal
  optimal solution and $\theta(y)$ the dual optimal solution.

  Now let us consider the case $\lambda^\T a \|y\| + d^\T y > 0$.
  Let us check that $\theta(y)$ is dual feasible, that is, $\theta(y) \geq 0$.
  Note that, due to the positive homogeneity of $\theta(y)$ and the condition
  $\lambda^\T a \|y\| + d^\T y > 0$ with respect to $y$, we can assume without
  loss of generality that $\|y\| = 1$.

  Let $\alpha = \lambda^\T a$ and $\beta = d^\T y$.
  Since $\theta(d) = +\infty \geq 0$ when $\|d\| =1$, we can assume that $y \neq
  d$ when $\|d\|=1$.
  Note that the same does not occur when $y = -d$ since we are assuming
  $\lambda^\T a \|y\| + d^\T y > 0$.
  Thus, $\alpha, \beta \in (-1,1)$.

  We will prove that $\theta(y) \sqrt{1 - \beta^2} = \alpha \sqrt{1 - \beta^2}
  + \beta \sqrt{1 - \alpha^2} \geq 0$, which implies that $\theta(y) \geq 0$.
  If $\alpha, \beta \geq 0$, then we are done.
  As $\alpha + \beta > 0$, at least one of them must be positive.
  Let us assume $\alpha > 0$ and $\beta < 0$, the other case is analogous.
  Then, $\alpha > - \beta \geq 0$.
  This implies that $\alpha^2 > \beta^2$.
  Subtracting $\alpha^2 \beta^2$, factorizing and taking square roots we obtain
  the desired inequality.

  Let us compute the value of the dual solution $\theta(y)$.
  First, $y = d$ and $\|d\| = 1$, $\theta(y) = +\infty$, which means that the
  optimal value is
  \[
    \lim_{\theta \to +\infty} \|\lambda - \theta a\| - \theta = - \lambda^\T a.
  \]
  One way of computing this limit is to multiply and divide the expression by
  $\tfrac{\|\lambda - \theta a\| + \theta}{\theta}$, expand, and simplify the
  numerator and denominator until one obtains something simple enough.

  Now assume $y \neq d$ if $\|d\| = 1$.
  Observe that $\|\lambda - \theta(y) a\| \|y\| - \theta(y) d^\T y
  =\sqrt{\|\lambda - \theta(y) a\|^2} \|y\| - \theta(y) d^\T y$.
  We have that
  \begin{align*}
    \|\lambda - \theta(y) a\|^2
    &= 1 + \theta(y)(\theta(y) - 2\lambda^\T a) \\
    &= 1 + (\theta(y) - \lambda^\T a + \lambda^\T a)(\theta(y) - \lambda^\T
    a - \lambda^\T a) \\
    &= 1 + (\theta(y) - \lambda^\T a)^2 - (\lambda^\T a)^2.
  \end{align*}
  Replacing $\theta(y)$, we obtain
  \begin{align*}
    \|\lambda - \theta(y) a\|^2
    &= 1 + \frac{(d^\T y)^2}{S(y)} - (\lambda^\T a)^2 \\
    &= \frac{1}{S(y)}(S^2(y) (1 - (\lambda^\T a)^2) + (d^\T y)^2) \\
    &= \frac{\|y\|^2}{S^2(y)}.
  \end{align*}
  Therefore,
  \begin{align*}
    \|\lambda - \theta(y) a\| \|y\| - \theta(y) d^\T y
    &= \frac{\|y\|^2}{S(y)} - d^\T y \lambda^\T a - \frac{(d^\T y)^2}{S(y)}\\
    &= \frac{\|y\|^2 - (d^\T y)^2}{S(y)} - d^\T y \lambda^\T a \\
    &= \sqrt{(\|y\|^2 - (d^\T y)^2)(1 - (\lambda^T a)^2)} - d^\T y \lambda^\T a.
  \end{align*}

  Let us now check the feasibility of $x(y)$.
  Let us first check that $\|x(y)\|^2 \leq \|y\|^2$.
  We have $\|x(y)\|^2 = S^2(y) - 2 S(y)(d^\T y + S(y) \lambda^\T a) \lambda^\T
  a + (d^\T y + \lambda^\T a S(y))^2$.
  Expanding and removing common terms yields $\|x(y)\|^2 = S^2(y)(1 - (\lambda^\T
  a)^2) + (d^\T y)^2 = \|y\|^2$.
  Thus, the first constraint is satisfied.\\
  To check the second constraint just notice that, as $\|a\| = 1$, $a^\T x(y)
  = -d^\T y$.

  The primal value of $x(y)$ is
  \[
    \lambda^\T x(y) = S(y)(1 - (\lambda^\T a)^2) - d^\T y \lambda^\T a  =
    \sqrt{(\|y\|^2 - (d^\T y)^2)(1 - (\lambda^T a)^2)} - d^\T y \lambda^\T a.
  \]
  As it coincides with the value of the dual solution, even when $y = d$ and
  $\|d\| =1$, we conclude that both are optimal.

  It only remains to check \eqref{eq:phi_value} for $\lambda = \pm a$.
  If $\lambda = -a$, then the linear constraint becomes $\lambda^\T x \geq d^\T
  y$ and the optimal solution is $x = \lambda\|y\|$.
  If $\lambda = a$, then the linear constraint becomes $\lambda^\T x \leq -d^\T
  y$ and $x = -d^\T y \lambda $ is then optimal. 
  In both cases \eqref{eq:phi_value} holds.
\end{proof}

\begin{lemma}
  \label{lemma:donttouch}
  Consider the set 
  \[ \snonhomo  = \{ (x,y) \mathbb{R}^{n+m} \st \|x\| \leq \|y\|,\ a^\T x + d^\T y = -1  \} \] 
  with  $a, d$ such that $\|a\| > \|d\|$. Let $\lambda, \beta\in D_1(0)$ be two vectors satisfying $\lambda^\T a + d^\T \beta \geq 0$ and consider $\maxhomobb$ defined in \eqref{eq:cphilambdadef}.
  
  Then, the face of $\maxhomobb$ defined by the valid inequality $-\lambda^\T x + \nabla \phifun(\beta)^\T y \leq 0$ does not intersect $\snonhomo$.
\end{lemma}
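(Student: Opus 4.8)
The plan is to argue by contradiction: assume some point $(x_0,y_0)$ lies both in $\snonhomo$ and in the face $F=\{(x,y)\in\maxhomobb : -\lambda^\T x+\nabla\phifun(\beta)^\T y=0\}$. From $(x_0,y_0)\in\snonhomo$ we have $\|x_0\|\le\|y_0\|$ and $a^\T x_0+d^\T y_0=-1\le 0$, so $x_0$ is feasible for the program defining $\phifun(y_0)$ and $\lambda^\T x_0\le\phifun(y_0)$; from $(x_0,y_0)\in\maxhomobb$ we have $\phifun(y_0)\le\lambda^\T x_0$. Hence $\lambda^\T x_0=\phifun(y_0)$, i.e., $x_0$ is optimal for that program. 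The crucial observation is that the linear constraint $a^\T x+d^\T y_0\le 0$ is \emph{strictly} slack at $x_0$ (its value is $-1$): hence along any segment from $x_0$ towards another point of the ball $\{\|x\|\le\|y_0\|\}$ one stays feasible near $x_0$, and since $\lambda^\T x$ is linear this forces $x_0$ to maximize $\lambda^\T x$ over the whole ball. Thus $x_0=\lambda\|y_0\|$ and $\phifun(y_0)=\|y_0\|$ (note $y_0\neq 0$, otherwise $a^\T x_0+d^\T y_0=0\neq-1$), and therefore
\[
\lambda^\T a\,\|y_0\|+d^\T y_0 \;=\; a^\T x_0+d^\T y_0 \;=\; -1 \;<\;0,
\]
which by \eqref{eq:phi_characterization} places $y_0$ \emph{strictly} inside $A_1=\{y:\lambda^\T a\|y\|+d^\T y\le 0\}$.

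Next I would use the face equation $\lambda^\T x_0=\nabla\phifun(\beta)^\T y_0$, which now reads $\|y_0\|=\nabla\phifun(\beta)^\T y_0$. Since $\phifun(y)\le\|y\|$ for every $y$ (drop the linear constraint in the defining program), $\nabla\phifun(\beta)\in\partial\phifun(\beta)\subseteq\partial\phifun(0)\subseteq B_1(0)$ (the middle inclusion by subadditivity of $\phifun$). Cauchy--Schwarz together with $\|y_0\|=\nabla\phifun(\beta)^\T y_0$ then forces $\nabla\phifun(\beta)=y_0/\|y_0\|=:\beta'$, a \emph{unit} vector, and $\beta'\in A_1$ (a cone). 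Now split on the sign of $\lambda^\T a+d^\T\beta$, which is $\ge 0$ by hypothesis. If $\lambda^\T a+d^\T\beta=0$, then $\beta\in A_1$, so $\phifun$ equals $\|\cdot\|$ near $\beta$ and $\nabla\phifun(\beta)=\beta$; thus $\beta'=\beta$ and $y_0=\|y_0\|\beta$, giving $\lambda^\T a\|y_0\|+d^\T y_0=\|y_0\|(\lambda^\T a+d^\T\beta)=0$, contradicting the strict inequality obtained above.

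The remaining --- and main --- case is $\lambda^\T a+d^\T\beta>0$. Then $\beta$ lies in the interior of $A_2=\{y:\lambda^\T a\|y\|+d^\T y\ge 0\}$, where $\phifun$ coincides with the smooth function $\phifun^2(y)=\sqrt{1-(\lambda^\T a)^2}\,\|y\|_W-\lambda^\T a\,d^\T y$, $W=I-dd^\T$. Write $g=\nabla\phifun(\beta)=\beta'$. First I would rule out $g$ being parallel to $\beta$: $g=\beta$ would force $\beta\in A_1$, while $g=-\beta$ would give, via Euler's identity $g^\T\beta=\phifun(\beta)$, the value $\phifun(\beta)=-1$, which \eqref{eq:phi_characterization} excludes. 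Next, $g\in\partial\phifun(0)$, $g\in\partial\phifun(\beta)$, and also $g=\nabla\phifun(g)\in\partial\phifun(g)$ (since $g=\beta'\in A_1$, where $\nabla\phifun(y)=y/\|y\|$). Sublinearity of $\phifun$ then yields, for all $t,s\ge 0$,
\[
g^\T(t\beta+sg)\;\le\;\phifun(t\beta+sg)\;\le\; t\,\phifun(\beta)+s\,\phifun(g)\;=\;g^\T(t\beta+sg),
\]
so $\phifun$ is affine, equal to $g^\T(\cdot)$, on the two-dimensional cone $\cone\{\beta,g\}$; in particular $\phifun$ is affine along the segment $[\beta,\beta+\varepsilon g]$ for small $\varepsilon>0$, a segment contained in the interior of $A_2$ and avoiding the origin. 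But there $\phifun=\phifun^2$, whose Hessian $\sqrt{1-(\lambda^\T a)^2}\,\nabla^2\|y\|_W$ is positive semidefinite with kernel exactly $\langle\beta\rangle$ (using $W\succ 0$, since $\|d\|<\|a\|=1$, and $\sqrt{1-(\lambda^\T a)^2}>0$, since $\lambda\neq\pm a$); as $g\notin\langle\beta\rangle$, the second derivative of $\phifun$ along $[\beta,\beta+\varepsilon g]$ is strictly positive, contradicting affinity. This proves the lemma. The step I expect to demand the most care is this last one: making sure the segment forced to be affine genuinely lies in the smooth, radially-ruled but transversally strictly-convex part of $\phifun$, and clearing the degenerate possibilities ($y_0=0$, $g\parallel\beta$) beforehand.
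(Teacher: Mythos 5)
Your proof is correct, but it takes a genuinely different route from the paper's. The paper also starts by deducing $\phifun(y_0)=\lambda^\T x_0$, but then invokes \Cref{lemma:exposing_ineq} to conclude that $(x_0,y_0)$ exposes the inequality indexed by $y_0/\|y_0\|$; since the point lies on the face of the $\beta$-inequality, the two inequalities must coincide, so $\nabla\phifun(y_0)=\nabla\phifun(\beta)$, whence (via \eqref{eq:gradientofbeta} and invertibility of $W$) $\beta=y_0/\|y_0\|$, and the explicit primal solution \eqref{eq:phi_primal_solution} then forces the linear constraint to be active at any optimum when $\lambda^\T a+d^\T\beta\geq 0$ --- contradicting $a^\T x_0+d^\T y_0=-1$. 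You instead exploit strict slackness of the linear constraint (its value is $-1$, not merely $\leq 0$) to pin down $x_0=\lambda\|y_0\|$ without any exposing-point machinery, then use Cauchy--Schwarz to force $\nabla\phifun(\beta)$ to be a unit vector strictly inside $A_1$, and finally derive a contradiction from sublinearity (affinity of $\phifun$ on $\cone\{\beta,\nabla\phifun(\beta)\}$ versus strict convexity of $\phifun^2$ transversal to rays). Your argument is longer and its last step is the delicate one, as you note, but it is more self-contained (it never needs \Cref{lemma:exposing_ineq} nor the closed form of the primal optimal solution) and it makes the geometric obstruction explicit: the gradient of $\phifun$ at a point of $A_2$ cannot be a unit vector in the interior of $A_1$. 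Two small remarks: in the boundary case $\lambda^\T a+d^\T\beta=0$ the phrase ``$\phifun$ equals $\|\cdot\|$ near $\beta$'' is imprecise since $\beta$ sits on $A_1\cap A_2$, though the conclusion $\nabla\phifun(\beta)=\beta$ is correct (both one-sided gradients equal $\beta/\|\beta\|$, as computed in the proof of \Cref{prop:phifun_sublinear}); and your final Hessian step could be replaced by the shorter observation that \eqref{eq:gradientofbeta} gives $\lambda^\T a+d^\T\nabla\phifun(\beta)=(1-\|d\|^2)\,\theta(\beta)\geq 0$, directly contradicting the strict negativity you established.
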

\begin{proof}
By contradiction, suppose that $(\xlp, \ylp) \in \maxhomobb$ is such that
\[(\xlp, \ylp) \in S \quad \wedge \quad -\lambda^\T \xlp + \nabla \phifun(\beta)^\T \ylp = 0. \]
The latter equality and the fact that $\phifun$ is sublinear implies $\phifun(\ylp) = \lambda^\T \xlp$. Moreover, $\xlp$ is a feasible solution of the optimization problem $\phifun(\ylp)$, which implies it is an optimal solution.

By Lemma~\ref{lemma:exposing_ineq} we know $(\xlp, \ylp)$ exposes the valid inequality of $\maxhomobb$ given by $-\lambda^\T x + \nabla \phifun(\ylp)^\T y \leq 0$.
By definition of exposing point this means 
\[\nabla \phifun(\ylp) = \nabla \phifun(\beta).\] 
From \eqref{eq:gradientofbeta}, since $W$ is invertible, we can see that this implies
  $\beta = \frac{\ylp}{\|\ylp\|}$.
  However, as $\lambda^\T a + d^\T \beta \geq 0$, the optimal solution of in the definition of $\phifun(\ylp)$, $x_0$, must satisfy
  $a^\T x_0 + d^\T \ylp = 0$.
  This contradicts $\phifun(\ylp) = \lambda^\T \xlp$, since $\xlp$
  is an optimal solution but $a^\T \xlp + d^\T \ylp = -1$.
\end{proof}
\end{document}